\theoremstyle{plain}
\newtheorem*{theorem*}{Theorem}
\newtheorem*{remark*}{Remark}
\newtheorem*{example*}{Example}
\newtheorem{lemma}{Lemma}[subsection]
\newtheorem{proposition}[lemma]{Proposition}
\newtheorem{corollary}[lemma]{Corollary}
\newtheorem{theorem}[lemma]{Theorem}
\newtheorem*{conjecture*}{Conjecture}
\newtheorem{sublemma}[lemma]{Sublemma}
\newtheorem{introtheorem}{Theorem}
\theoremstyle{definition}
\newtheorem{definition}[lemma]{Definition}
\newtheorem{example}[lemma]{Example}
\theoremstyle{remark}
\newtheorem{remark}[lemma]{Remark}
\newtheorem{notation}[lemma]{Notation}
\newcommand{\Hom}{\operatorname{Hom}}
\newcommand{\triv}{{\mathbbm 1}}
\newcommand{\id}{\operatorname{Id}}
\renewcommand{\Im}{\operatorname{Im}}
\newcommand{\Ker}{\operatorname{Ker}}
\newcommand{\F}{{\mathcal F}}
\newcommand{\rk}{{\operatorname{rk}}}
\newcommand{\sdim}{{\operatorname{sdim}}}
\newcommand{\C}{{\mathbb C}}
\newcommand{\Z}{{\mathbb Z}}
\newcommand{\eps}{{\varepsilon}}
\newcommand{\Lam}{{\Lambda}}
\newcommand{\lam}{{\lambda}}
\newcommand{\gl}{\mathfrak{gl}}
\newcommand{\abs}[1]{\left|{#1}\right|}
\newcommand{\p}{\mathfrak{p}}
\newcommand{\Rep}{\mathrm{Rep}}
\newcommand{\fb}{\mathfrak{b}}
\newcommand{\InnaA}[1]{{#1}}
\newcommand{\InnaB}[1]{{#1}}
\newcommand{\InnaC}[1]{{#1}}
\newcommand{\InnaD}[1]{{#1}}
\newcommand{\VeraA}[1]{{#1}}
\newcommand{\VeraB}[1]{{#1}}
\newcommand{\comment}[1]{}
\def\quotient#1#2{%
    \raise1ex\hbox{$#1$}\Big/\lower1ex\hbox{$#2$}%
}
\begin{document}

\date{\today}
\title{Duflo-Serganova functor and superdimension formula for the periplectic Lie superalgebra}
 \author{Inna Entova-Aizenbud, Vera Serganova}
\address{Inna Entova-Aizenbud, Dept. of Mathematics, Ben Gurion University,
Beer-Sheva,
Israel.}
\email{entova@bgu.ac.il}
\address{Vera Serganova, Dept. of Mathematics, University of California at
Berkeley,
Berkeley, CA 94720.}
\email{serganov@math.berkeley.edu}

\dedicatory{To Pavel Etingof for his 50th birthday}
%
%
\begin{abstract}
In this paper, we study the representations of the periplectic Lie superalgebra using the Duflo-Serganova functor. Given a simple $\p(n)$-module $L$ and a certain element $x\in \p(n)$ of rank $1$, we give an explicit description of the composition factors of the $\p(n-1)$-module $DS_x(L)$, which is defined as the homology of the complex $$\Pi M \xrightarrow{x} M \xrightarrow{x} \Pi M.$$

In particular, we show that this $\p(n-1)$-module is multiplicity-free.
 
 We then use this result to give a simple explicit combinatorial formula for the superdimension of a simple integrable finite-dimensional $\p(n)$-module, based on its highest weight. 
 
 In particular, this reproves the Kac-Wakimoto conjecture for $\p(n)$, which was proved earlier by the authors in \cite{ES_KW}.
\end{abstract}

\keywords{}
\maketitle
\setcounter{tocdepth}{3}

\section{Introduction}
\subsection{}
Consider a complex vector superspace $V$, and let $\C^{0|1}$ be the odd one-dimensional vector superspace. 

The (complex) periplectic Lie superalgebra $\p(V)$ is the Lie superalgebra of 
endomorphisms of a complex vector superspace $V$ \VeraA{preserving} a non-degenerate 
symmetric form $\omega: \VeraA{S^2 V} \to \C^{0|1}$ (this form is also referred to as an 
``odd form''). When $V = \C^{n|n}$ and $\omega_n: \C^{n|n} \otimes \C^{n|n} \to \C^{0|1}$ pairs the even and odd 
parts of this vector superspace, we denote this Lie superalgebra by $\p(n):=\p(V)$.

The periplectic Lie superalgebra $\p(n)$
\VeraA{has} an interesting non-semisimple representation theory; some results 
on the category of finite-dimensional integrable representations 
of $\p(n)$ can be found in \cite{BDE:periplectic, Chen:periplectic, Coulembier:Brauer, DLZ:fft_ortho, Gor:center, HIR19, IRS19, Moon, Ser:periplectic}.

An important tool in studying representations of Lie superalgebras, particularly the connection between representation theory of Lie superalgebras of same type but different rank, is the Duflo-Serganova functor. Given an odd element $x\in \p(n)$ satisfying $[x,x]=0$, and a $\p(n)$-module $M$, we denote by $DS_x M$ the homology of the complex $$\Pi M \xrightarrow{x} M \xrightarrow{x} \Pi M.$$ The resulting homology is a module over the Lie superalgebra $DS_x(\p(n))$, and $DS_x$ can be seen as a symmetric monoidal functor $$\F_n \to \Rep(DS_x(\p(n))).$$ This functor is called the Duflo-Serganova functor. 

This functor has been introduced in \cite{DufloSer:functor} in a general Lie superalgebra setting. The Duflo-Serganova functor has been studied extensively for different Lie superalgebras, see for example \cite{ES_perip_Deligne, ES_KW, GorSerDS, HeiWei14, HR18, IRS19, Ser:sdim}. Its precise effect in the periplectic case has been unknown until now, although it was shown that it can be used to compute Grothendieck rings for $\p(n)$, see \cite{IRS19}. 

For a suitably chosen $x\in \p(n)$ of rank $1$, the Lie superalgebra $DS_x(\p(n))$ is isomorphic to $\p(n-1)$. In that case, the Duflo-Serganova functor becomes simply a symmetric monoidal functor $DS_x: \F_n \to \F_{n-1}$, \VeraA{where by $\F_n$ we denote the category of finite-dimensional $\p(n)$-modules \InnaC{where the $\p(n)_{\bar{0}} \cong \gl_n$ action} can be lifted to an action of $GL(n)$.}

Although this functor is not exact on either side, it turns out to be extremely useful to carry information between the categories.

\subsection{}

We recall that $\p(n)_0 \cong \gl_n(\C)$ and we will use the set of simple roots $$\eps_2 - \eps_{1}, 
\ldots, \eps_n-\eps_{n-1},\VeraA{-\eps_{n-1}-\eps_n}$$ where the last root is odd and all others are even. Thus the dominant integral weights of $\p(n)$ are of the form $\lambda = \sum_i 
\lambda_i \eps_i$, where $\lambda_1 \leq \lambda_2 \leq \ldots\leq\lambda_n$ are integers. \InnaC{The set of dominant integral weights for $\p(n)$ will be denoted by $\Lambda_n$.}

 Let $L_n(\lam)$ be a simple module in $\F_n$ with highest weight $\lam$ whose highest weight space is purely even. \InnaC{All simple modules in $\F_n$ are of the form $L_n(\lam)$ or $\Pi L_n(\lam)$ for some $\lambda \in \Lambda_n$.}
 
 For each such weight $\lam$ we can construct a {\it cap diagram}: namely, we consider the integer line, and draw a \InnaC{black ball} in each position $\lam_i + (i-1)$, $i=1,2,\ldots, n$; the rest of the positions are empty. We then draw caps, where each cap has a bullet on the right end and an empty position on the left end. The cap diagram is drawn iteratively: at each step, we take a bullet which is not yet part of a cap, and draw a cap connecting this bullet with the closest empty position on its left, which is not yet part of any cap. 
 
 We will use the following terminology. If a cap $c'$ is sitting ``inside'' another cap $c$, we say that the $c'$ is internal to $c$; if there are no intermediate caps containing $c'$ and contained in $c$, we say that $c'$ is a successor of $c$.
 
 A cap $c$ will be called {\it maximal} if it is not internal to any cap.

 Let $x \in {\p(n)}_{\bar{1}}$ correspond to the root $2\eps_n$.
The first main result of this article, concerning the action of the $DS_x$ functor on simple modules, is as follows:
\begin{introtheorem}[See Theorem \ref{thrm:DS_clusters}, Corollary \ref{cor:DS_caps}]\label{introthrm:DS_clusters}

\mbox{}

The $\p(n-1)$-module $DS_{x}(L_n(\lam))$ is multiplicity free. Its composition factors can be explicitly described as simple modules $\Pi^{z(\lam, \mu)} L_{n-1}(\mu)$, where the cap diagram of $\mu$ is obtained by removing a single maximal cap from the cap diagram of $\lam$. \VeraA{Here we denote by $\Pi$ the functor $?\otimes \mathbb C^{0|1}$}. 

The parity $z(\lam, \mu) \in \Z/2\Z$ is given by $z(\lam, \mu) \equiv z \mod 2$, where $\lam_{n-z} + (n-z-1)$ is the rightmost end of the removed cap.
\end{introtheorem}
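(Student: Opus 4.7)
The plan is to reduce the theorem to a character identity, verify the identity through cap-diagram combinatorics, and then promote it to a multiplicity-free statement via a dimension count. Since $x\in\p(n)_{\bar 1}$ corresponds to the root $2\eps_n$, the Duflo--Serganova construction identifies $\ker\ad_x/\operatorname{im}\ad_x$ with $\p(n-1)$, and the supercharacter of $DS_x M$, viewed on the Cartan of $\p(n-1)$, is obtained by the substitution $e^{\eps_n}\mapsto 1$ in the supercharacter of $M$. The target therefore splits into two parts: the Grothendieck-group identity
\[
\mathrm{sch}\!\bigl(L_n(\lam)\bigr)\big|_{\eps_n=0}\;=\;\sum_{\mu}(-1)^{z(\lam,\mu)}\,\mathrm{sch}\!\bigl(L_{n-1}(\mu)\bigr),
\]
with $\mu$ ranging over weights obtained from $\lam$ by removing a single maximal cap; and the statement that each $\pm 1$ coefficient records an honest simple composition factor rather than a cancelling pair of opposite parity.

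For the character identity, I would express $\mathrm{sch}(L_n(\lam))$ via a resolution by Kac modules $K_n(\nu)$, whose multiplicities are encoded by the cap-diagram combinatorics of $\F_n$. Characters of Kac modules are explicit and specialize transparently under $\eps_n\mapsto 0$: each $K_n(\nu)$ becomes an alternating sum of Kac characters for $\p(n-1)$ indexed by which position on the integer line is vacated. The key combinatorial input is that the effect of the substitution on cap diagrams is exactly the removal of a maximal cap: caps external to the removed one are unchanged, while caps internal to it reassemble coherently into a smaller cap diagram. Inverting the Kac-to-simple transition matrix on both sides and matching term-by-term should yield the claimed sum, the sign $(-1)^{z(\lam,\mu)}$ emerging from the odd-root factor at the right endpoint $\lam_{n-z}+(n-z-1)$ during the specialization.

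For multiplicity-freeness, the argument is a dimension count. Writing the composition multiplicities of $L_{n-1}(\mu)$ and $\Pi L_{n-1}(\mu)$ in $DS_x(L_n(\lam))$ as $a_\mu$ and $b_\mu$, the supercharacter identity gives $a_\mu-b_\mu=\pm 1$; combined with an independent computation showing $\sum_\mu(a_\mu+b_\mu)\dim L_{n-1}(\mu)=\sum_\mu \dim L_{n-1}(\mu)$, this forces $a_\mu+b_\mu=1$ for every $\mu$, so exactly one of $a_\mu,b_\mu$ equals $1$ and the other vanishes. The independent dimension computation would proceed by filtering $L_n(\lam)$ by its $\gl_n$-isotypic components and analyzing the action of $x$ directly on each component. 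The principal obstacle is the combinatorial inversion in the character identity: the Kac-to-simple transition matrix is deeply nested (caps within caps), and verifying that the $\eps_n\mapsto 0$ specialization commutes with this inversion in precisely the manner dictated by the maximal-cap-removal rule will require a careful induction exploiting the hierarchical successor/internal structure. A secondary obstacle, specific to $\p(n)$, is the absence of a useful central-character decomposition of $\F_{n-1}$ that might otherwise separate the $\mu$-summands into distinct blocks for free.
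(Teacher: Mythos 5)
The proposal's first stage (the Grothendieck-group/supercharacter identity under $e^{\eps_n}\mapsto 1$) is a reasonable plan and is in the spirit of what is known from \cite{IRS19}, though executing the Kac-to-simple inversion in nested cap diagrams is nontrivial and has not been spelled out. The genuine gap is in the multiplicity-freeness step. The supercharacter identity yields only the Euler characteristic $a_\mu-b_\mu$; to pin down $a_\mu+b_\mu$ you propose the ``independent'' dimension identity
\[
\dim DS_x\bigl(L_n(\lam)\bigr)\;=\;\sum_{\mu}\dim L_{n-1}(\mu),
\]
with $\mu$ ranging over maximal cap removals. But neither side is computable in advance: there is no closed formula for $\dim L_n(\lam)$ (as opposed to its superdimension, which this paper first \emph{derives} from the theorem you are trying to prove), and computing $\dim\ker(x)-\dim\operatorname{im}(x)$ on $L_n(\lam)$ by ``analyzing the action of $x$ on each $\gl_n$-isotypic component'' is not simpler than the theorem itself --- the $\gl_n$-decomposition of a simple $\p(n)$-module is precisely one of the unknowns. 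So the proposed dimension count is circular; without it, nothing rules out cancelling pairs $a_\mu=b_\mu+1$ with $b_\mu$ large, or spurious factors $L_{n-1}(\nu)\oplus\Pi L_{n-1}(\nu)$ invisible to the supercharacter.

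The paper takes a completely different, categorical route that sidesteps characters entirely. It uses the fact that $DS_x$ commutes with the translation functors $\Theta_i$ (Lemma~\ref{cor:DS_translation_commute}) together with the fine analysis of $\Theta_i$ on simples (Lemma~\ref{lem:transimple}) to show that any composition factor of $DS(L_n(\lam))$ must come from removing one black ball off the end of a cluster (Lemma~\ref{lem:DS_cluster_start}, Proposition~\ref{prop:DS_cluster_growth}); a duality argument via the $\clubsuit$-involution then rules out non-maximal caps (Proposition~\ref{prop:DS_nonremovable}). Multiplicity exactly one is proved directly for removal of the rightmost ball by a highest-weight argument (Lemma~\ref{lem:DS_remove_rightmost}), and then transferred to an arbitrary maximal cap by a change of Borel (the $\dagger$-Borel with odd first root) together with a combinatorial computation of the parity shift (Lemmas~\ref{lem:existence1}--\ref{lem:existence2}, Proposition~\ref{prop:existence}). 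In particular the multiplicity-one bound is obtained module-theoretically, not numerically. Note also that your ``secondary obstacle'' (no central-character decomposition) is somewhat misplaced: the paper does exploit the block structure of $\F_n$, but indirectly through Theorem~\ref{old_thrm:transl_proj_simples} about translation functors, which is what replaces the central-character argument available in other types.
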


\begin{remark}
  A similar result for the general linear Lie superalgebra was proved in \cite{HeiWei14} using a similar technique. \VeraA{However, in contrast with the $\mathfrak{gl}(m|n)$-case, $DS_x(L_n(\lambda))$ may be not semisimple. For example, \InnaC{consider the case $n=2$ and the simple module $V_2$. Then $DS_x(V_2) \cong V_1$, which is indecomposable but not simple. Another example is} $n=3$ with $L_n(\lambda)$ being isomorphic to the simple ideal $\mathfrak{sp}(3)$ of matrices with zero supertrace. Then $DS_x(L_n(\lambda))$ is isomorphic to
 $\mathfrak{sp}(2)$ which is indecomposable but not simple $\p(2)$-module. } 
\end{remark}

In Section \ref{ssec:corollaries_DS}, we state some corollaries of this theorem, such as a criterion describing when the $\p(n-1)$-module $DS_x(L_n(\lam))$ is simple. 

\subsection{}
We next proceed to compute the superdimension of any simple finite-dimensional $\p(n)$-module. This is done by \InnaC{defining a subset of $\Lambda_n$ consisting of {\it worthy} weights. For any worthy weight $\lam$, we construct a rooted forest graph $F_{\lam}$.} If $\lam$ is not worthy, we show that $\sdim L_n(\lam)=0$. If $\lam$ is worthy, then $\sdim L_n(\lam)\neq 0$, and we give a simple combinatorial formula for $\sdim L_n(\lam)$ based on the rooted forest graph $F_{\lam}$. Below we elaborate on this result.

\mbox{}

To state the result on superdimensions, we will need additional terminology. 

A cap $c$ \InnaC{in a cap diagram} will be called {\it odd} if there is an odd number of caps internal to $c$, including $c$ itself. A weight $\lam \InnaC{\in \Lambda_n}$ will be called {\it worthy} if each cap $c$ \InnaC{in $d_{\lam}$} has at most one odd successor, and there is at most one maximal odd cap (such a cap will appear for worthy weights only when $n$ is odd).

If $\lam$ is worthy, we will construct a rooted forest $F_{\lam}$ corresponding to $\lam$ as follows. 
\begin{definition}\label{introdef:forest_lambda}
 Let $\lam$ be a worthy weight. We construct a rooted forest ${F}_{\lam}$ as follows.
  \begin{itemize}

  \item The nodes of ${F}_{\lam}$ are pairs of caps $(c_0, c_1)$ in the cap diagram of $\lam$, $c_0$ is even, and $c_1$ is the unique odd successor of $c_0$. If $n$ is odd, we also add a node $(c)$ corresponding to the unique maximal odd cap $c$ in the cap diagram of $\lam$.
 
  \item There is an edge from a node $v=(c_0, c_1)$ to a node $v'=(c'_0, c'_1)$ in ${F}_{\lam}$ if $c'_0$ is a successor of either $c_0$ or $c_1$. Similarly, if $n$ is odd and $v = (c)$ as above, then there is an edge from $v$ to $v' = (c'_0, c'_1)$ if $c'_0$ is a successor of $c$.
 \end{itemize}
\end{definition}
\begin{remark}
 \InnaC{This is a slightly different (but equivalent) version of Definition \ref{def:forest_lambda}.}
\end{remark}

We can now state our second main theorem.

\begin{theorem}[See Theorem \ref{thrm:sdim}]\label{introthrm:sdim}

\mbox{}

 Let $\lam \in \Lambda_n$ and let $L_n(\lam)$ be the corresponding simple module in $\F_n$ (with even highest weight vector, as before). 
 
 If the weight $\lam$ is not worthy (see Definition \ref{def:caps_prop}), then $$\sdim L_n(\lam)=0.$$
 
 If the weight $\lam$ is worthy, let $F_{\lam}$ be the corresponding rooted forest (as in Definition \ref{def:forest_lambda} above). Then $$\sdim L_n(\lam) = \frac{\abs{F_{\lam}}!}{F_{\lam}!}$$ where $\abs{F_{\lam}} = \lfloor\frac{n+1}{2}\rfloor$ is the number of nodes in the forest $F_{\lam}$, and $$F_{\lam}! = \prod_{v \text{ a node of } F_{\lam}} \sharp \text{ descendants of v in } F_{\lam}$$ is the forest factorial of $F_{\lam}$.
\end{theorem}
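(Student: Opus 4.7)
The plan is to prove the formula by induction on $n$, using Theorem \ref{introthrm:DS_clusters} to reduce the computation of $\sdim L_n(\lam)$ to a recursion over simple $\p(n-1)$-modules. Since the Duflo-Serganova functor $DS_x\colon \F_n \to \F_{n-1}$ is symmetric monoidal, it preserves superdimension, so combining $\sdim L_n(\lam) = \sdim DS_x(L_n(\lam))$ with the composition-factor description of Theorem \ref{introthrm:DS_clusters} yields the key recursion
$$\sdim L_n(\lam) = \sum_\mu (-1)^{z(\lam,\mu)} \sdim L_{n-1}(\mu),$$
summed over all $\mu \in \Lambda_{n-1}$ obtained from $d_\lam$ by deleting a single maximal cap. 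The base cases $n \leq 1$ are immediate.

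For non-worthy $\lam$, I would show $\sdim L_n(\lam) = 0$ by a case analysis on the obstruction to worthiness (a cap with two or more odd successors, the wrong count of maximal odd caps, or a maximal odd cap in the wrong parity). Removing a maximal cap from such a $\lam$ yields a $\mu$ which is either itself non-worthy, in which case it contributes zero by induction, or worthy, in which case I expect the worthy contributions to cancel in pairs due to the signs $(-1)^{z(\lam,\mu)}$. Concretely, one would look for an involution on the set of maximal caps of $d_\lam$ whose fixed points correspond to removals producing non-worthy $\mu$'s, and whose two-element orbits correspond to pairs of removals producing worthy $\mu$'s with opposite signs.

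For worthy $\lam$, the argument splits according to the parity of $n$. When $n$ is odd, only the removal of the unique maximal odd cap $c$ yields a worthy $\mu$: any other removal creates a second maximal odd cap, rendering $\mu$ non-worthy for $n-1$ even. The formula then reduces to the single identity $|F_\lam|!/F_\lam! = |F_\mu|!/F_\mu!$, which holds because $F_\mu = F_\lam \setminus \{(c)\}$ and the subtree of the root $(c)$ in $F_\lam$ is all of $F_\lam$ (so $F_\lam! = |F_\lam|\cdot F_\mu!$). When $n$ is even, every removal of a maximal (even) cap produces a worthy $\mu$; writing $s_j$ for the subtree size in $F_\lam$ of the $j$-th root, a short computation shows $F_{\mu^j}! = |F_\lam|\cdot F_\lam!/s_j$, so the required identity reduces to $\sum_j s_j = |F_\lam|$, which is the tautology expressing the partition of $F_\lam$ into root subtrees. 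Both cases depend on a parity lemma asserting that in any worthy cap diagram, the parity of a cap matches the parity of the index $i$ of its right-end bullet; this forces $(-1)^{z(\lam,\mu)} = +1$ for all the surviving $\mu$ in either parity of $n$. The most delicate step I anticipate is the cancellation argument in the non-worthy case, together with the careful matching of $F_\mu$ to the appropriate local modification of $F_\lam$ predicted by the relevant equivalent version of Definition \ref{def:forest_lambda}.
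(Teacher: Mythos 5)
Your proposal is largely correct and captures the $n$-odd half of the argument exactly as the paper does; but for $n$ even you take a genuinely different route. The paper avoids any explicit sign-cancellation by applying the composed functor $\overline{DS}=DS_{x_{n-1}}\circ DS_{x_n}\colon\F_n\to\F_{n-2}$, expanding $\sdim L_n(\lam)$ as a signed sum over ordered pairs $(c_1,c_2)$ of maximal-cap removals, and observing immediately that the pairs where $c_1,c_2$ are \emph{both} originally maximal cancel in two's by Remark \ref{rmk:z_interpretation}; what remains are pairs where $c_2$ is a successor of $c_1$, which match the nodes of $F_\lam$ and lead into Corollary \ref{cor:forest_binom}. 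You instead use a single application of $DS$ for $n$ even and push the cancellation into a combinatorial involution on the set of maximal caps. This does work — after unwinding Definition \ref{def:caps_prop} one finds that the only way a non-worthy $\lam$ with $n$ even can have a worthy removal $\mu_c$ is when $c_*$ has exactly two odd successors $c,c'$ and these are the only obstruction; removing $c$ and removing $c'$ then give isomorphic forests $F_{\mu_c}\cong F_{\mu_{c'}}$ (both have root $(c_*,\,\cdot)$ with the identical child set) and opposite signs, since $z(\lam,c)-z(\lam,c')\equiv int(c')+\sum int(\text{even maximal caps in between})\equiv 1\pmod 2$. All other removals are non-worthy by a case check. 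So the involution is the transposition of $c$ and $c'$ in that sub-case, the identity otherwise. Your $n$-even worthy computation also goes through: $F_{\mu_{c_0}}$ is obtained from $F_\lam$ by attaching all roots below a single new root of size $|F_\lam|$, so $F_{\mu_{c_0}}!=|F_\lam|\cdot F_\lam!/s_{c_0}$ and the sum telescopes via $\sum_j s_j=|F_\lam|$. Both approaches are valid; the paper's $\overline{DS}$ trick buys a cleaner proof with no cancellation bookkeeping, at the cost of needing Lemma \ref{lem:forest_heap} / Corollary \ref{cor:forest_binom}, whereas yours stays at depth one in $DS$ but requires the involution analysis you flagged as delicate.

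One concrete error: the ``parity lemma'' you invoke --- that in a worthy diagram the parity of a cap equals the parity of the index of its right-end bullet --- is false. Already for $\lam=0$ with $n=2$ the inner cap $(-1,0)$ has $int=1$ but right end at position $0$. The fact you actually need is weaker and true: for a maximal cap $c$ whose removal survives (the unique odd maximal cap if $n$ is odd; any maximal cap if $n$ is even and $\lam$ is worthy), $z(\lam,c)$ equals the total number of balls strictly to the right of $c$'s right end, which is $\sum_{c'\text{ maximal, right of }c} int(c')$; each such $c'$ is even, so the sum is even and $(-1)^{z(\lam,c)}=1$. Replace the parity lemma with this direct count and your proof closes.
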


\begin{example}
For the weight $$\lam = \eps_3 + 3\eps_4 + 3\eps_5 + 3\eps_6 + 5\eps_7+ 7\eps_8 + 7\eps_9 + 7\eps_{10}$$ of $\p(10)$, the cap diagram will be 

$$ \xymatrix@=5pt{  &{} &{} &{} &{} &{} &{} &{} &{} &{} &{} &{} &{} &{} &{} &{}\\  
&\underset{-3}{} &\underset{-2}{} &\underset{-1}{} 
&\underset{0}{} \ar@{-} `u/6pt[l] `/6pt[l] &\underset{1}{} \ar@{-} `u/15pt[l] `/15pt[lll]  &\underset{2}{} &\underset{3}{}  \ar@{-} `u/6pt[l] `/6pt[l]&\underset{4}{} &\underset{5}{} &\underset{6}{} \ar@{-} `u/6pt[l] `/6pt[l]  &\underset{7}{ } \ar@{-} `u/15pt[l] `/15pt[lll] &\underset{8}{} \ar@{-} `u/25pt[l] `/25pt[lllllllllll]  &\underset{9}{} &\underset{10}{} &\underset{11}{} \ar@{-} `u/6pt[l] `/6pt[l]  &\underset{12}{} &\underset{13}{}  &\underset{14}{} \ar@{-} `u/6pt[l] `/6pt[l] &\underset{15}{} \ar@{-} `u/15pt[l] `/15pt[lll] &\underset{16}{} \ar@{-} `u/25pt[l] `/25pt[lllllll]  }$$ 

This is a worthy weight, with odd caps $(-1, 0), (2,3), (5,6), (10, 11), (13,14)$; the rest of the caps are even. The rooted forest will be $$ \xymatrix{  &{}  &{\bullet}    \ar[rd] \ar[ld] &{}    &{\bullet}\ar[d]  \\  &{\bullet} &{}  &{\bullet}      &{\bullet}  }$$ Hence $\sdim L_n(\lam) = \frac{5!}{3\cdot 1 \cdot 1 \cdot 2 \cdot 1} = 20$.
\end{example}

As a corollary, we recover the result of \cite{ES_KW} proving the Kac-Wakimoto conjecture for $\p(n)$: any module lying in a ``non-principal'' block of $\F_n$ (in the sense of \cite{ES_KW}) has superdimension zero.

\subsection{Acknowledgements}
I.E.-A. was supported by the ISF grant no. 711/18. V.S. was supported by NSF grant 1701532. Part of the work was carried out during the visit of V.S. to Ben Gurion University of the Negev, which was supported by the Faculty of Natural Sciences Distinguished Scientist Visitors Program and by the Center of Advanced Studies in Mathematics in Ben Gurion University.

The authors would like to thank Catharina Stroppel for her explanations on the cap diagrams.
\section{Preliminaries}\label{sec:prelim}
\subsection{General}

Throughout this paper, we will work over the base field $\mathbb C$, and all 
the categories considered will be $\C$-linear.

A {\it 
vector superspace} will be defined as a $\mathbb{Z}/2\mathbb{Z}$-graded vector 
space $V=V_{\bar 0}\oplus V_{\bar 1}$. The {\it parity} of a homogeneous vector 
$v \in V$ will be denoted by $p(v) \in \mathbb{Z}/2\mathbb{Z}=\{\bar 0, \bar 
1\}$ (whenever the notation $p(v)$ appears in formulas, we always assume that 
$v$ is homogeneous).

\subsection{The periplectic Lie superalgebra}\label{ssec:notn:periplectic}
\subsubsection{Definition of \VeraA{the} periplectic Lie superalgebra}\label{sssec:def_periplectic}
Let $n \in \Z_{>0}$, and let $V_n$ be an $(n|n)$-dimensional vector superspace 
equipped with a non-degenerate odd symmetric form
\begin{eqnarray}
\label{eq:form}
\omega:V_n\otimes V_n\to\mathbb C,\quad \omega(v,w)=\omega(w,v), 
\quad\text{and}\quad 
\omega(v,w)=0\,\,\text{if}\,\,p(v)=p(w).
\end{eqnarray}

Then $\operatorname{End}_{\mathbb C}(V_n)$ inherits the structure of a vector 
superspace from $V_n$. We denote by $\mathfrak{p}(n)$ the Lie superalgebra of 
all 
$X\in\operatorname{End}_{\mathbb C}(V_n)$ preserving $\omega$, i.e. satisfying  
$$\omega(Xv,w)+(-1)^{p({X})p(v)}\omega(v,Xw)=0.$$

\begin{remark}\label{rmk:basis}
Choosing dual bases $v_1, v_2, \ldots, v_n$ in $V_{\bar{0}, n}$ and $v_{1'}, 
v_{2'},\ldots v_{n'}$ in $V_{\bar{1}, n}$, we can write the matrix of $X\in 
\mathfrak{p}(n)$ as
$\left(\begin{smallmatrix}A&B\\C&-A^t\end{smallmatrix}\right)$
where $A,B,C$ are $n\times n$ matrices such that $B^t=B,\, C^t=-C$.
\end{remark}

We will also use the triangular decomposition 
${\p(n)} \cong 
{\p(n)}_{-1} \oplus {\p(n)}_0 \oplus {\p(n)}_1$ where $${\p(n)}_0 \cong \gl(n), \;\; 
{\p(n)}_{-1} 
\cong \Pi\wedge^2(\C^n)^*, \;\; {\p(n)}_1 \cong \Pi S^2\C^{n}.$$
Then the action of $\p(n)_{\pm 1}$ on any $\p(n)$-module is ${\p(n)}_0$-equivariant.

\subsubsection{Weights for the periplectic 
superalgebra}\label{sssec:notn:weight_p_n}
The integral weight lattice for $\mathfrak{p}(n)$ will be $span_{\mathbb 
Z}\{\eps_i\}_{i=1}^n$.

\begin{itemize}[label={$\star$}]
 \item We fix a set of simple roots $\eps_2 - \eps_{1}, 
\ldots, \eps_n-\eps_{n-1},\VeraA{-\eps_{n-1}-\eps_n}$, the \VeraA{last} root is odd and all others are even.

Hence the dominant integral weights will be given by $\lambda = \sum_i 
\lambda_i \eps_i$, where $\lambda_1 \leq \lambda_2 \leq \ldots\leq\lambda_n$.
\item We fix an order on the weights of $\p(n)$: for weights $\mu, \lambda$, we 
say that $\mu \geq \lambda$ if $\mu_i \leq \lambda_i$ for each $i$. The set of all dominant integral weights for $\p(n)$ will be denoted by $\Lambda_n$.

\begin{remark}
 It was shown in \cite[Section 3.3]{BDE:periplectic} that \InnaC{the order} $\leq$ corresponds to a 
highest-weight structure on the category of finite-dimensional representations 
of $\p(n)$.\VeraA{ Note that in the cited paper we use slightly different set of simple roots
 $-\eps_1-\eps_2,\eps_1 - \eps_{2}, \ldots, \eps_{n-1}-\eps_{n}$.} 
\end{remark}

\item
The simple finite-dimensional representation of $\p(n)$ corresponding to the 
weight $\lambda$ whose highest weight vector is {\it even} will be denoted by 
$L_n(\lambda)$.
\begin{example}
 Let $n\geq 2$. The natural representation $V_n$ of $\p(n)$ has highest weight $-\eps_1$, 
with odd highest-weight vector; hence $V_n \cong \Pi L_n(-\eps_1)$. The 
representation $\bigwedge^2 V_n$ has highest weight $-2\eps_1$, and the 
representation $S^2 V_n$ has highest weight $-\eps_1 - \eps_2$; both have even 
highest weight vectors, so $$\wedge^2 V_n \twoheadrightarrow L_n(-2\eps_1), \;\; L_n(-\eps_1 - \eps_2) \hookrightarrow S^2 V_n .$$
\end{example}
\item Set $\rho^{(n)} = 
\sum_{i=1}^n (i-1)\eps_i$, and for any weight 
$\lambda$, denote $$\bar{\lambda}  = \lambda+\rho^{(n)} .$$

\end{itemize}

\subsubsection{Representations of 
\texorpdfstring{$\p(n)$}{p(n)}}\label{sssec:notn:repr_periplectic}

We denote by $\F_n$ the category of 
finite-dimensional representations of
$\p(n)$ whose restriction to $\p(n)_{\bar 0} \cong 
\mathfrak{gl}(n)$ integrates to an action of $GL(n)$.

By definition, the morphisms in $\F_n$ will be {\it 
grading-preserving} 
$\p(n)$-morphisms,
i.e., $\operatorname{Hom}_{\F_n}(X,Y)$ is a vector space and not a 
vector superspace. This is important in order to ensure that the category
$\F_n$ be abelian.

The category $\F_n$ is not semisimple. In fact, this 
category is a highest-weight category; \InnaC{more about the highest-weight structure can be found in \cite{BDE:periplectic}.}

\subsubsection{Weight diagrams and arrows}
The following notation has been introduced in \cite{BDE:periplectic}. 

For $\lambda$ a dominant weight we define the map 
\begin{eqnarray*}
f_\lambda:\mathbb{Z} \to \{0,1 \}\quad\text{as}\quad
f_\lambda(i)&=&
\begin{cases}1& \text{if $i\in \{\bar{\lambda}_j,\, j=1, \ldots, n\}$},\\
0& \text{else}.
\end{cases}
\end{eqnarray*}
 The corresponding  {\it weight diagram} $d_\lambda$ is the  labeling of the integer line by symbols $\bullet$ (``black ball'') and $\circ$ (``empty'') such that $i$ has label $\bullet$ if $f(i) =1$, and label $\circ$ otherwise.

\begin{notation}\label{notn:g_lam}
 For $\lambda \in \Lambda_n$ we define the function $g_\lambda:\mathbb Z\to\{-1,1\}$
by setting $$g_\lambda(i)=(-1)^{f_\lambda(i) +1}.$$

\end{notation}
So $g_\lambda(i)=1$ if $d_\lambda$ has a black ball at the $i$-th position and $g_\lambda(i)=-1$ otherwise.

\begin{notation}
 For any $i<j$ set 
$r_{\lam}(j,i)\;=\;\sum_{s=i}^{j-1}g_{\lambda}(s)$.
\end{notation}

As in \cite[Section 6.2]{BDE:periplectic}, in the diagram $d_{\lam}$ we will draw a solid\footnote{In this paper we do not use any other types of arrows, but in \cite{BDE:periplectic} ``dual'' dashed arrows were introduced as well.} arrow from position $j$ to position $i<j$ if $f_\lam(j)=1=g_\lambda(j)$, and if $$r_{\lam}(j,i)=0,\,\, \text{  and  } \,\forall \, i<s<j, \; r_{\lam}(j,s)\geq 0.$$

\begin{example}\label{ex:wt1}
 Let $n = 6$, $\lambda = (0,0, 1, 1,1,3)$. The diagram $d_{\lam}$ is given by

  $$ \xymatrix@=20pt{&{} &{} &{} &{}&{}  &{}&{}&{}&{}&{}  &{} \\  &\underset{-4}{\circ}  &\underset{-3}{\circ} &\underset{-2}{\circ}  &\underset{-1}{\circ}  &\underset{0}{\bullet}  &\underset{1}{\bullet} \ar@/_1.3pc/[luld]  &\underset{2}{\circ}  &\underset{3}{\bullet} &\underset{4}{\bullet} \ar@/_1.3pc/[luld] \ar@/_3.3pc/[lllullld] &\underset{5}{\bullet} \ar@/_4.2pc/[llllulllld] &\underset{6}{\circ} &\underset{7}{\circ}  &\underset{8}{\bullet} } $$
 and all other positions on the integer line are empty.
\end{example}

 \begin{definition}\label{def:cluster}
  A (black) cluster in a weight diagram $d_{\alpha}$ is a sequence of consecutive black balls: 
  
\resizebox{\textwidth}{!}
{
$d_\alpha = \xymatrix{&\underset{i-1}{\circ}  &\underset{i}{\bullet}  &\underset{i+1}{\bullet}  &{\ldots} & &\underset{j-1}{\bullet} &\underset{j}{\bullet} &\underset{j+1}{\circ}}$
}

In other words, it is a segment in of the form $[i, j]$, $i<j$ such that $$f_{\alpha}(i-1)=0, \; f_{\alpha}(i)=f_{\alpha}(i+1)=\ldots =f_{\alpha}(j-1)=f_{\alpha}(j)=1, \; f_{\alpha}(j+1)=0.$$

Position $i$ is called the beginning of the cluster, and position $j$ is called the end of the cluster.
 \end{definition}
 \begin{notation}
 Let $\lam \in \Lambda_n$. Consider the solid arrows in the diagram $d_{\lam}$. We will call a solid arrow {\it maximal} if there is no solid arrow above it; in other words, a solid arrow from $j$ to $i$ is called maximal if there is no solid arrow from $k$ to $l$ where $l\leq i$, $k\geq j$ and $(k, l)\neq (j, i)$.
\end{notation}
 \subsubsection{Cap diagrams}\label{sssec:cap_diag}
 Consider the weight diagram $d_{\lambda}$ of $\lambda$. Instead of drawing arrows, we can draw a cap diagram on the integer line $\Z$ iteratively as follows: for any pair of positions $(i, j)$, $i<j$ 
such that $f_{\lam} (i) =0$, $f_\lam(j) =1$, we draw a cap connecting these 
(oriented from $j$ to $i$) if all the positions $i <l <j$ are already part of 
some cap. 

Clearly, every black ball in $d_{\lam}$ will be the right end of exactly one cap. The weight diagram $d_{\lam}$ can be uniquely determined from the cap diagram (by abuse of notation, the cap diagram is also denoted $d_{\lam}$).

\begin{definition}\label{def:caps_poset}
 
 \mbox{}
 \begin{itemize}
  \item A cap $(i, j)$ is called {\it internal} to a cap $(i', j')$ if $i' \leq 
i < j \leq j'$. We denote: $(i, j) \lhd (i', j')$.
\item A cap $(i, j)$ is called {\it maximal} if it is not internal to any other cap. 
  \item A cap $(i, j)$ is called {\it a successor} of a cap $(i', j')$ if $(i, j) \lhd (i', j')$ and there is no cap $(i'', j'')$ such that $(i, j) \lhd (i'', j'') \lhd (i', j')$. 
 \end{itemize}

\end{definition}
\begin{example}\label{ex:wt1_caps}

Consider the weight $\lambda = \eps_1 + \eps_2+ 3\eps_3+5\eps_4+5\eps_5+5\eps_6$ for $\p(6)$, \InnaC{as in Example \ref{ex:wt1}}. Here we draw the cap diagram for $\lambda$ on top of the weight diagram $d_{\lam}$:  

$$ \xymatrix@=20pt{  &{} &{} &{} &{} &{} &{} &{} &{} &{} &{} &{} &{} &{} &{} &{}\\  &\underset{-1}{\circ} &\underset{0}{\circ} 
&\underset{1}{\bullet}  \ar@{-} `u/10pt[l] `/10pt[l] &\underset{2}{\bullet}  \ar@{-} `u/20pt[lll] `/20pt[lll] &\underset{3}{\circ} &\underset{4}{\circ} &\underset{5}{\bullet}  \ar@{-} `u/10pt[l] `/10pt[l] &\underset{6}{\circ} &\underset{7}{\circ } &\underset{8}{\bullet} \ar@{-} `u/10pt[l] `/10pt[l]  &\underset{9}{\bullet}  \ar@{-} `u/20pt[lll] `/20pt[lll] &\underset{10}{\bullet} \ar@{-} `u/40pt[l] `/40pt[lllllll] }$$

The partial order on the caps in this diagram is: $$(0,1) \lhd (-1,2), \;\; (4,5) \lhd (3,10),\; (7,8) \lhd (6,9) \lhd (3,10).$$ The maximal caps here are $(-1,2)$ and $(3,10)$. The successors of the cap $(3,10)$ are $(4,5)$, $(6,9)$.
\end{example}
\VeraA{
\begin{remark} Every solid arrow goes from the left end of a cap to
  the right end of its successor cap. In particular, the total number
  of solid arrows equals $n$ minus the number of maximal caps.
  \end{remark}}
\begin{lemma}
 Let $(i, j)$ be a cap in the cap diagram $d_{\lam}$. Then exactly one of the following is true:
 \begin{itemize}
  \item We have $i+1=j$.
  \item There is a solid arrow from $j$ to $i+1$, and this is the longest solid arrow originating in $j$.
 \end{itemize}

\end{lemma}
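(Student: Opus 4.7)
The two cases are obviously mutually exclusive (a solid arrow requires a strictly shorter target), so I only need to show that when $j>i+1$ there is a solid arrow from $j$ to $i+1$ and that no solid arrow from $j$ can reach further left than $i+1$.

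Assume $j>i+1$. By the iterative construction of the cap diagram, at the moment we draw the cap $(i,j)$ every position $l$ with $i<l<j$ is already the endpoint of some cap $(i',j')$ drawn earlier. Since $i<i'<j'<j$ would be forced (any cap straddling $i$ or $j$ would have been drawn later, not earlier), all these internal caps lie entirely inside the open interval $(i,j)$. Thus the $j-i-1$ positions strictly between $i$ and $j$ are partitioned into pairs (left end, right end) of internal caps, each pair consisting of one empty position ($g_\lambda=-1$) and one black ball ($g_\lambda=+1$). Consequently
\[
r_\lambda(j,i+1)=\sum_{s=i+1}^{j-1} g_\lambda(s)=0.
\]

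Next I verify the positivity condition $r_\lambda(j,s)\geq 0$ for every $s$ with $i+1<s<j$. Writing $r_\lambda(j,s)=\sum_{t=s}^{j-1}g_\lambda(t)$ and grouping the terms according to the internal cap each position belongs to, one checks directly that a cap with both endpoints $\geq s$ contributes $0$, a cap with both endpoints $<s$ contributes $0$, and a cap with left end $<s$ and right end $\geq s$ contributes $+1$ (the remaining configuration is impossible since left ends precede right ends). Hence $r_\lambda(j,s)$ equals the number of internal caps that are cut by the vertical line through $s$, which is a nonnegative integer. Combined with $g_\lambda(j)=1$ and $r_\lambda(j,i+1)=0$, this shows that a solid arrow from $j$ to $i+1$ is indeed drawn.

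Finally, I argue maximality. Suppose, for contradiction, that some solid arrow from $j$ reaches a position $k\leq i$. Then in particular $r_\lambda(j,s)\geq 0$ for all $k<s<j$, and we must have $r_\lambda(j,k)=0$. Taking $s=i+1$ (allowed because $k\leq i<i+1<j$, unless $k=i$ and $i+1=j$ which we have excluded; if $k=i$ use instead the direct computation below), we use the additivity $r_\lambda(j,i)=g_\lambda(i)+r_\lambda(j,i+1)=-1+0=-1$, since $f_\lambda(i)=0$ forces $g_\lambda(i)=-1$. If $k=i$ this already contradicts $r_\lambda(j,k)=0$; if $k<i$ then $s=i$ lies in the range $k<s<j$ and $r_\lambda(j,i)=-1<0$ contradicts the positivity condition. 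In either case no solid arrow from $j$ can reach left of $i+1$, so the arrow from $j$ to $i+1$ is the longest one originating at $j$. The only delicate point in the argument is the bookkeeping in the second paragraph, where one must be sure that the caps drawn before $(i,j)$ are genuinely contained in $(i,j)$; this is where the iterative nature of the construction is essential.
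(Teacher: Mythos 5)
Your proof is correct and follows essentially the same route as the paper's: the paper's (very terse) argument simply asserts that the cap construction gives $r_{\lam}(j,l)\geq 0$ for $i+1\leq l\leq j-1$, $r_{\lam}(j,i+1)=0$, and $r_{\lam}(j,i)<0$, and your argument is a fleshed-out derivation of those three facts (via the partition of $(i,j)$ into nested internal caps) followed by the same conclusion. One small expository wrinkle: the clause beginning ``Taking $s=i+1$\ldots'' is confusing and never actually uses $s=i+1$ — you can delete it and go straight to the additivity $r_\lambda(j,i)=g_\lambda(i)+r_\lambda(j,i+1)=-1$, then split into $k=i$ and $k<i$ as you already do.
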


\begin{proof}
\InnaC{
First of all, if $i+1=j$ then clearly there is no solid arrow from $j$ to $i+1$.
 Assume $i+1 \neq j$.} By the construction of the cap diagram, we have: $$\forall i+1\leq l\leq j-1, \;\;\InnaC{r_{\lam}(j,l)=}\sum_{s=l}^{j-1} g_\lam(s)\,\InnaC{\geq } \, 0, \;\InnaC{r_{\lam}(j,i+1)=}\sum_{s=i+1}^{j-1} g_\lam(s)=0, \; \InnaC{r_{\lam}(j,i)<0}$$ Hence the statement follows. 
\end{proof}

\begin{corollary}\label{cor:max_cap_arrow}
 Let $(i, j)$ be a maximal cap in the cap diagram of $d_{\lam}$. Then there is a solid arrow from $j$ to $i+1$, and this solid arrow is maximal.
\end{corollary}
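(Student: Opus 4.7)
The plan is to combine the immediately preceding Lemma with the containment order $\lhd$ on caps. Applying that Lemma to the cap $(i,j)$ (assuming the nontrivial case $i+1<j$), one gets a solid arrow from $j$ to $i+1$ which is the \emph{longest} solid arrow originating at~$j$. It then only remains to verify that this arrow is maximal in the sense of the Notation introduced above.

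I would establish maximality by contradiction. Suppose there exists a solid arrow from $k$ to $l$ with $k\geq j$, $l\leq i+1$, and $(k,l)\neq(j,i+1)$. The subcase $k=j$ is ruled out immediately: it would force $l<i+1$, contradicting the ``longest arrow'' clause of the Lemma applied to the cap $(i,j)$.

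The main subcase is $k>j$. Here $k$ is a black ball strictly to the right of $j$, so $k$ is the right endpoint of a unique cap $(i'',k)$ in $d_{\lam}$. Applying the Lemma to $(i'',k)$ and using that some solid arrow from $k$ does exist (namely the one to $l$), the longest arrow from $k$ must go to $i''+1$, which forces $i''+1\leq l$, hence $i''\leq l-1\leq i$. Combined with $k>j$ this gives $(i,j)\lhd (i'',k)$ as a proper containment, contradicting the maximality of the cap $(i,j)$. The arrow from $j$ to $i+1$ is therefore maximal.

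The only step that requires any care, which I expect to be the main technical point, is the ``reverse'' use of the Lemma in the case $k>j$: namely, that a solid arrow originating at a black ball $k$ cannot reach further left than one position to the right of the left end of the cap rooted at $k$. This is exactly the longest-arrow clause of the Lemma applied to that cap, so no argument beyond invoking the Lemma twice is needed.
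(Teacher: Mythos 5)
The paper states this as an immediate consequence of the preceding Lemma with no separate argument, and your proof supplies exactly the intended deduction: one application of the Lemma to $(i,j)$ produces the arrow from $j$ to $i+1$ and rules out any longer arrow from $j$, while for $k>j$ a second application to the cap $(i'',k)$ rooted at $k$ forces $i''\le i$ and hence $(i,j)\lhd(i'',k)$, contradicting the maximality of the cap. One hidden step deserves a line: the Lemma is an either/or dichotomy, so to conclude that the longest arrow from $k$ reaches $i''+1$ you must first rule out the trivial case $i''+1=k$; this does hold, since a trivial cap $(k-1,k)$ would make $k-1$ an empty position, giving $r_\lambda(k,k-1)=-1<0$ and hence no arrow out of $k$ whatsoever, but you rely on it only implicitly when you say ``some solid arrow from $k$ does exist.'' Your tacit restriction to $i+1<j$ is also correct and worth flagging to the reader: for a trivial maximal cap $(j-1,j)$ there is no arrow out of $j$ at all, so the corollary as phrased in the paper silently assumes nontriviality of the maximal cap, which is harmless for the way the result is later used in Corollary~\ref{cor:DS_caps}.
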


\subsubsection{Tensor Casimir and translation functors}

Consider the following natural endomorphism ${\Omega}^{(n)}$ of the endofunctor $(-) 
\otimes V_n$ on $\mathcal{F}_n$.

\VeraA{ Recall that $\p_n$ is the set of fixed points of the involutive automorphism $\sigma$ of $\gl(V_n)$.
We consider the $\p_n$-equivariant decomposition:
$$\gl(V_n) \cong\p_n \oplus \p_n^*$$
where $\p_n^*$ is the eigenspace of $\sigma$ with eigenvalue $-1$.
Both $\p_n$ and $\p_n^*$ are maximal isotropic subspaces with respect to the invariant symmetric form on $\gl(V_n)$ and hence this form defines
a non-degenerated pairing $\p_n^*\otimes\p_n\to\C$.}

We begin by taking the orthogonal $\p_n$-equivariant decomposition
$$\gl(V_n) \cong\p_n \oplus \p_n^*$$ with respect to 
the form $$tr:=ev\circ \sigma_{V_n, V_n^*}: \gl(V_n) \cong V_n \otimes V_n^* 
\to 
\C.$$

\begin{definition}[Tensor Casimir]\label{def:Casimir}
For any $M \in \mathcal{F}_n$, let ${\Omega}_M$ be the composition
$$ V_n \otimes M \xrightarrow{\id \otimes coev \otimes \id} V_n \otimes \p(n)^* 
\otimes \p(n) \otimes M \xrightarrow{i_* \otimes \id}  V_n \otimes \gl(V_n) 
\otimes \p(n) \otimes M   \xrightarrow{act \otimes act} 
V_n \otimes M $$ where $i_*: \p(n)^* \to \gl(V_n)$ is the $\p(n)$-equivariant embedding defined above.
\end{definition}
\begin{definition}[Translation functors]\label{def:transl_functors}
 For $k \in \C$, we define a functor 
${\Theta'}^{(n)}_k:\mathcal{F}_n\to\mathcal{F}_n$ as the functor $\Theta^{(n)}= ( - )\otimes V_n$ 
followed by the projection onto the generalized $k$-eigenspace for 
${\Omega}^{(n)}$, i.e. 
\begin{eqnarray}
\label{thetak}
{\Theta'}^{(n)}_k(M):=\bigcup_{m>0}\Ker ({\Omega}^{(n)} 
-k\operatorname{Id})^m_{|_{M\otimes V_n}}
\end{eqnarray}
and set ${\Theta}^{(n)}_k:=\Pi^k{\Theta'}^{(n)}_k$ in case $k\in\mathbb{Z}$ (it was proved in \cite{BDE:periplectic} that $ \forall k \notin \Z, \; {\Theta}^{(n)}_k \cong 0$). 
\end{definition}

The functors $\Theta_k$ are exact, since $ - \otimes V_n$ is an exact functor, and we have the following result, proved in \cite{BDE:periplectic}.
\begin{theorem}[See \cite{BDE:periplectic}.]\label{old_thrm:transl_proj_simples}

Let $L, L'$ be non-isomorphic simple modules in $\F_n$. Let $i\in \Z$.
\begin{enumerate} 
\item The module $\Theta_i L$ is multiplicity free.
\item The modules $\Theta_i(L)$ and $\Theta_i(L')$ have no common simple subquotients (that is, their sets of composition factors are disjoint).
\end{enumerate}

\end{theorem}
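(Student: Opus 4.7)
\medskip

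\noindent\textbf{Proof proposal.} My plan is to reduce both statements to a single combinatorial statement about the eigenvalues of the tensor Casimir $\Omega^{(n)}$ on composition factors of $L\otimes V_n$, and then extract multiplicity-freeness and disjointness from the resulting bijection with positions in the weight diagram. First, I would analyze $L_n(\lambda)\otimes V_n$ as an object of $\F_n$. Using the triangular decomposition $\p(n)\cong\p(n)_{-1}\oplus\p(n)_0\oplus\p(n)_1$ and the fact that the weights of $V_n$ are $-\eps_1,\dots,-\eps_n$ (each with multiplicity one, up to parity), the composition factors of $L_n(\lambda)\otimes V_n$ are of the form $\Pi^{\square}L_n(\lambda-\eps_j)$ for those indices $j$ for which $\lambda-\eps_j$ remains dominant; combinatorially, these correspond to moves of a single black ball in $d_\lambda$ from the position $\bar\lambda_j=\lambda_j+(j-1)$ to the next empty position on its left.

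Second, I would compute the action of $\Omega^{(n)}$ on each composition factor. Unwinding the definition of $\Omega^{(n)}$ via the pairing $\p_n^*\otimes\p_n\to\C$ and the action maps, the operator $\Omega^{(n)}$ becomes (up to a constant depending only on $\lambda$) a graded Casimir-type element whose action on a subquotient of highest weight $\lambda-\eps_j$ is a scalar $k_j$ depending linearly on the new position of the ball in the weight diagram; the key computation is to show that $k_j$ equals the target position of the ball (possibly shifted by a global constant). I would carry this out by reducing to highest-weight vectors: on the highest weight line of a subquotient isomorphic to $L_n(\lambda-\eps_j)$ inside $L_n(\lambda)\otimes V_n$, the map $\Omega^{(n)}$ acts by an explicit sum over the standard basis $v_1,\dots,v_n,v_{1'},\dots,v_{n'}$ which, using the matrix form from Remark \ref{rmk:basis} and the pairing $tr$, collapses to a single scalar expressible in terms of the entries of $\lambda$.

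With the eigenvalues in hand, the two claims follow quickly. Since the positions $\bar\lambda_j=\lambda_j+(j-1)$ for $j=1,\dots,n$ are pairwise distinct (they are precisely the positions of the black balls of $d_\lambda$), the scalars $k_j$ are pairwise distinct. Hence each generalized eigenspace ${\Theta'}^{(n)}_k L_n(\lambda)$ contains at most one of the candidate composition factors $\Pi^{\square}L_n(\lambda-\eps_j)$, proving (1). For (2), note that a simple $L_n(\mu)$ can be a composition factor of ${\Theta'}^{(n)}_k L_n(\lambda)$ only if $\mu=\lambda-\eps_j$ for some $j$ and the Casimir eigenvalue on $L_n(\mu)$ inside $L_n(\lambda)\otimes V_n$ equals $k$; the cap/arrow combinatorics of \S\ref{sssec:cap_diag} then force $\lambda$ to be recoverable from $\mu$ and $k$ (it is obtained by sliding a ball back through an arrow ending at position $k$), so at most one $L$ can yield a given $L_n(\mu)$ in the $k$-eigenspace.

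The main obstacle I expect is the second step: pinning down the exact scalar by which $\Omega^{(n)}$ acts on a highest weight vector of a subquotient of the form $L_n(\lambda-\eps_j)$. The difficulty comes from the fact that $\p(n)$ carries no invariant non-degenerate form, so the usual quadratic-Casimir shortcut is unavailable; one has to work with the mixed pairing $\p_n^*\otimes\p_n\to\C$ coming from the trace form on $\gl(V_n)$, and track parity signs carefully. Once the scalar is identified as an affine function of the ball position, both (1) and (2) reduce to the elementary observation that distinct black balls occupy distinct positions on the integer line.
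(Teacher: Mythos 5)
The paper does not prove Theorem~\ref{old_thrm:transl_proj_simples} itself; it imports it from \cite{BDE:periplectic}, so there is no in-paper proof to compare against. Judged on its own terms, your proposal has substantive gaps.

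The central problem is the combinatorial claim at the outset: you assert that the composition factors of $L_n(\lambda)\otimes V_n$ are all of the form $\Pi^{\square}L_n(\lambda-\varepsilon_j)$, i.e.\ obtained by sliding a single black ball one step to its left. This is false. The tables in the Appendix (Section~\ref{app:1}) list the composition factors of $\Theta_i L_n(\lambda)$, and already when $d_\lambda$ has $\circ\,\circ\,\bullet\,\circ$ at positions $i-2,\dots,i+1$ the factors include diagrams in which the ball at position $i$ has moved one step to the \emph{right}, or arbitrarily far to the right past other balls (the rows with a black ball at some position $j>i+1$ attached by a dashed arrow). Such $\mu$ do not differ from $\lambda$ by a single $\varepsilon_j$, so your candidate list is far too small. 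Relatedly, calibrating the Casimir eigenvalue as the \emph{target} position of the ball cannot be right: Lemma~\ref{lem:transimple}\,(1) ties the eigenvalue $i$ to the condition $f_\lambda(i)=1$, i.e.\ to the \emph{source} position, precisely because the same eigenvalue must accommodate factors where the ball lands at $i-1$, at $i+1$, or farther to the right.

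Even granting the reduced candidate list, the logic does not deliver multiplicity-freeness. The tensor Casimir $\Omega^{(n)}$ acts by the \emph{same} scalar on every copy of a fixed simple $L_n(\mu)$ inside $L_n(\lambda)\otimes V_n$; distinctness of the $k_j$ for distinct $j$ only separates pairwise nonisomorphic candidates and gives no bound on the multiplicity of any single $L_n(\mu)$. You would need an independent input --- a weight-space dimension count, a filtration via standard or costandard modules, or the adjunction with projective covers used in the proof of Lemma~\ref{lem:transimple}\,(4) --- to get the bound $\le 1$. For part~(2), the assertion that ``$\lambda$ is recoverable from $\mu$ and $k$'' is likewise unsupported once the ball can land in more than one position: knowing the target diagram $d_\mu$ and the source position $k$ does not immediately determine $d_\lambda$ without the arrow/cap combinatorics of \cite{BDE:periplectic}, which is exactly where that reference does the real work. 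The overall strategy (sort candidates by Casimir eigenvalue) is reasonable as an orienting idea, but both the combinatorial description of the candidates and the step extracting (1) and (2) from eigenvalue data are missing or incorrect as written.
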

For more details on the structure of $\F_n$, see  
\cite{BDE:periplectic}. 
\begin{lemma}\label{lem:transimple} Assume that
  $\Theta_i(L_n(\lambda))\neq 0$. Then
  \begin{enumerate}
  \item\label{itm:transimple_1} $f_\lambda(i)=1$, $f_\lambda(i-1)=0$.
    \item\label{itm:transimple_2} Let $d_{\lambda'}$ be obtained from $d_\lambda$ by moving
      $\bullet$ from position $i$ to position $i-1$. Then
      $[\Theta_i(L_n(\lambda)):\Pi^{i+1}L_n(\lambda')]=1$.
      \item\label{itm:transimple_3} If $[\Theta_i(L_n(\lambda)):\Pi^z L_n(\mu)]\neq 0$ for \InnaD{some $z\in \{0,1\}$} and
        $\mu\neq\InnaD{\lambda'}$, then $f_\mu(i)\neq 0$ or $f_\mu(i-1)\neq 1$.
        \item\label{itm:transimple_4} \InnaD{If $[\Theta_i(L_n(\lambda)):\Pi^z L_n(\mu)]\neq 0$ for \InnaD{some $z\in \{0,1\}$} and
         $f_\mu(i)= f_\mu(i-1)=0$, then $f_{\mu}(s) = f_{\lam}(s)$ for any $s\leq i-1$.} 
        \end{enumerate}
  \end{lemma}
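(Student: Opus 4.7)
All four statements refine the multiplicity-free property of Theorem~\ref{old_thrm:transl_proj_simples}, and my plan is to combine it with two structural inputs from \cite{BDE:periplectic}: the standard filtration of $L_n(\lambda)\otimes V_n$ as a $\p(n)$-module, and the explicit computation of the eigenvalues of the tensor Casimir $\Omega^{(n)}$ on its subquotients. An analogous argument for translation functors on simple modules is well known in the $\gl(m|n)$ setting.

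For parts \eqref{itm:transimple_1} and \eqref{itm:transimple_2}, I would proceed as follows. As a $\gl_n$-module $V_n$ is semisimple with weights $-\eps_1,\ldots,-\eps_n$, so the top stage of the standard filtration of $L_n(\lambda)\otimes V_n$ consists of $\p(n)$-subquotients whose heads are the simple modules $L_n(\lambda-\eps_k)$ for those $k$ with $f_\lambda(\bar\lambda_k-1)=0$, i.e.\ those $k$ for which moving the black ball from position $\bar\lambda_k$ to position $\bar\lambda_k-1$ produces a valid diagram. On such a subquotient $\Omega^{(n)}$ acts by the scalar $\bar\lambda_k$. Requiring this eigenvalue to equal $i$ forces the existence of some $k$ with $\bar\lambda_k=i$ and $f_\lambda(i-1)=0$, which is \eqref{itm:transimple_1}, and identifies $\lambda'=\lambda-\eps_k$. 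The parity shift $\Pi^{i+1}$ in \eqref{itm:transimple_2} arises by combining the parity of the odd basis vector $v_{k'}$ used to build this highest weight vector (controlled by the position $i$ in the fixed ordering of Remark~\ref{rmk:basis}) with the factor $\Pi^i$ from the definition $\Theta_i=\Pi^i\Theta'_i$. Multiplicity one then follows from Theorem~\ref{old_thrm:transl_proj_simples}(1).

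Part \eqref{itm:transimple_3} is then immediate: among the weights $\lambda-\eps_k$ lying in the $i$-eigenspace of $\Omega^{(n)}$, only $\lambda'$ realises the pattern $f_\mu(i-1)=1$, $f_\mu(i)=0$ at the pair of positions $(i-1,i)$. Any other composition factor of $\Theta_i L_n(\lambda)$ must correspond to a different subquotient of the standard filtration and therefore cannot satisfy this pattern simultaneously.

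The genuinely delicate point is \eqref{itm:transimple_4}, concerning composition factors $L_n(\mu)$ that arise not from the top of the standard filtration but from deeper stages, where further rearrangements through the $\p(n)_{\pm 1}$-action have taken place. The plan is to combine the eigenvalue constraint $\bar\lambda_k=i$ with the cap-diagram combinatorics of \cite{BDE:periplectic}: any rearrangement preserving the $\Omega^{(n)}$-eigenvalue $i$ can only affect positions in a neighbourhood of $i$, so a diagram $\mu$ with $f_\mu(i)=f_\mu(i-1)=0$ is reachable only through a chain of moves confined to positions $\geq i-1$, and no such chain can alter positions $s\leq i-1$ without changing which ball was originally moved and hence violating the eigenvalue constraint. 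A careful induction on the distance to $i$ then yields $f_\mu(s)=f_\lambda(s)$ for all $s\leq i-1$. This combinatorial verification will be the main technical effort.
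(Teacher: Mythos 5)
There are genuine gaps here, and the proposal diverges in structure from the paper's proof in ways that matter.

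First, the foundational object you invoke --- a ``standard filtration of $L_n(\lambda)\otimes V_n$'' --- does not exist in general. Since $L_n(\lambda)$ is simple rather than standard, the tensor product $L_n(\lambda)\otimes V_n$ has no $\nabla$-filtration, so the analysis of its ``top stage'' is not well posed. The paper's actual argument for \eqref{itm:transimple_2} works inside $\Theta_i(\nabla(\lambda))$: one has an exact sequence $0\to \Pi^{i+1}\nabla(\lambda')\to\Theta_i(\nabla(\lambda))\to M\to 0$, which supplies an embedding $\Pi^{i+1}L_n(\lambda')\hookrightarrow\Theta_i(\nabla(\lambda))$, and the delicate step is showing this copy of $L_n(\lambda')$ cannot come from a lower composition factor $L_n(\nu)$ of $\nabla(\lambda)$. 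That is done by observing that any such $\nu$ is of the form $\lambda+\sum a_{ij}(\varepsilon_i+\varepsilon_j)$, which makes $[L_n(\nu)\otimes V_n:L_n(\lambda')]=0$ unless $\nu=\lambda$. Your proposal omits this entirely, and it is precisely the step that turns ``multiplicity $\leq 1$'' into ``multiplicity $=1$''; the multiplicity-one theorem alone gives only the upper bound.

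Second, your treatment of \eqref{itm:transimple_3} asserts that any composition factor of $\Theta_i L_n(\lambda)$ other than $L_n(\lambda')$ ``must correspond to a different subquotient of the standard filtration and therefore cannot satisfy this pattern,'' but this is not argued, and it is exactly where deeper (non-top-layer) composition factors could in principle interfere. The paper's argument is much cleaner and sidesteps the filtration entirely: if some $\mu\neq\lambda'$ with $f_\mu(i)=0$, $f_\mu(i-1)=1$ occurred, let $\nu$ be obtained from $\mu$ by moving the black ball from $i-1$ to $i$; then \eqref{itm:transimple_2} applied to $\nu$ gives $[\Theta_i L_n(\nu):\Pi^{i+1}L_n(\mu)]=1$, so $L_n(\mu)$ would occur in both $\Theta_i L_n(\lambda)$ and $\Theta_i L_n(\nu)$ with $\lambda\neq\nu$, contradicting the disjointness statement of Theorem \ref{old_thrm:transl_proj_simples}(2). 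Your outline makes no use of disjointness, which is the real engine here.

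Third, for \eqref{itm:transimple_4} you present only an informal plan (``a careful induction on the distance to $i$'') resting on an unsupported claim that rearrangements preserving the $\Omega^{(n)}$-eigenvalue ``can only affect positions in a neighbourhood of $i$.'' This claim is not obviously true and would need substantial work. The paper instead uses adjunction: $\dim\Hom(\Theta_{i+1}P_n(\mu),L_n(\lambda)) = \dim\Hom(P_n(\mu),\Theta_i L_n(\lambda))=[\Theta_i L_n(\lambda):\Pi^z L_n(\mu)]$, and then invokes \cite[Lemma 7.2.3]{BDE:periplectic}, which controls how $\Theta_{i+1}P_n(\mu)$ sits relative to $P_n(\lambda)$ and forces $f_\mu(s)=f_\lambda(s)$ for $s\leq i-1$. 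As written, your proposal does not provide a proof of \eqref{itm:transimple_4}, and your route to it is unlikely to close without essentially redoing that adjunction analysis.
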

\begin{proof} 
\InnaD{The statement in \eqref{itm:transimple_1}} is already proven in \cite[\InnaD{Corollary 8.2.2}]{BDE:periplectic}.

To prove \InnaD{\eqref{itm:transimple_2},} recall that we have an exact sequence
$$0\to \Pi^{i+1}\nabla(\lambda')\to \Theta_i(\nabla(\lambda))\to M\to 0$$
where either $M=0$ or $M=\nabla(\lambda'')$ where $d_{\lambda''}$ is
obtained from $d_\lambda$ by moving $\bullet$ from $i$ to $i+1$
if it is possible. Therefore we have an embedding $\InnaD{\Pi^{i+1}} L_n(\lambda')\to\Theta_i(\nabla(\lambda))$.
On the other hand,  all composition factors \InnaD{(up to change of parity)} $L_n(\nu)$ of $\nabla(\lambda)$ satisfy
the condition $\nu=\lambda+\sum a_{ij}(\varepsilon_i+\varepsilon_j)$
for some $a_{ij}\in\mathbb N$. That ensures that
$[L_n(\nu)\otimes  V:L_{n}(\lambda')]=0$ unless $\nu=\lambda$.
Hence $[\Theta_i(L_n(\lambda)):\InnaD{\Pi^{i+1}} L_n(\lambda')]=1$.

To show \InnaD{\eqref{itm:transimple_3},} assume the opposite, i.e., $f_\mu(i)= 0$ \InnaD{and}
$f_\mu(i-1)=1$. Let $d_\nu$ be obtained from $d_\mu$ by moving black
ball from $i-1$ to $i$. Then by  \InnaD{\eqref{itm:transimple_2},} we have
$[\Theta_i(L_n(\nu)):\InnaD{\Pi^{i+1}} L_n(\mu)]=1$. Therefore $L_n(\mu)$ \InnaD{(up to change of parity)} appears as a
composition \InnaD{factor} in both $\Theta_i(L_n(\lambda))$ and
$\Theta_i(L_n(\nu))$. This contradicts Theorem
\ref{old_thrm:transl_proj_simples} (2). 

\InnaD{The statement in \eqref{itm:transimple_4} is proved in the same methods as in the proof of \cite[\InnaD{Corollary 8.2.2}]{BDE:periplectic}. Assume that $[\Theta_i(L_n(\lambda)):\Pi^z L_n(\mu)]\neq 0$ for \InnaD{some $z\in \{0,1\}$} and that $f_\mu(i)= f_\mu(i-1)=0$. Denote by $P_n(\lam), P_n(\mu)$ the projective covers of $L_n(\lambda), L_n(\mu)$ respectively. Then 

\begin{align*}
&\dim \Hom_{\p(n)}(\Theta_{i+1} P_n(\mu),L_n(\lam)) = \dim \Hom_{\p(n)}(P_n(\mu),\Theta_i(L_n(\lam))) =\\
&= [\Theta_i(L_n(\lam)):\Pi^z L_n(\mu)]\neq 0 
\end{align*} 

Now, by \cite[Lemma 7.2.3]{BDE:periplectic}, the statement of \eqref{itm:transimple_4} follows.} 
\end{proof}

\subsubsection{Blocks}\label{sssec:blocks}
 There are $2(n+1)$ blocks in the category $\F_n$ has blocks. These blocks are in bijection with the set
$\{ -n, -n+2 ,\ldots, n-2, n\}\times\{+,-\}$.

We have a decomposition 
$$\F_n=\bigoplus_{k\in  \{ -n, -n+2 ,\ldots, n-2, n\}}\left( \F_n \right)^+_k\oplus \bigoplus_{k\in  \{ -n, -n+2 ,\ldots, n-2, n\}}\left( \F_n \right)^-_k,$$
where the functor $\Pi$ (parity change) induces an equivalence $\left( \F_n \right)_k^+ \cong \left( \F_n \right)_k^-$. 
Hence we may define {\it up-to-parity blocks} $$\F^k_n := \left( \F_n \right)_k^+ \oplus \left( \F_n \right)_k^-.$$ The block $\InnaC{\F^k_n}$ contains all simple modules $L(\lambda)$ with $$\sum_i (-1)^{\bar{\lam}_i}=k$$
By abuse of terminology, we will just call these ``blocks'' throughout the paper.

\begin{theorem}[See \cite{BDE:periplectic}.]\label{old_thrm:blocks_action}
Let $i \in \mathbb Z$, $k \in \{ -n, -n +2 , \ldots, n-2, n\}$. Then we have 
$$\Theta_i \VeraA{\F_n^k}\subset \begin{cases} \F_n^{k+2} \,\,\text{if}\,\,i\,\, \text{is odd}\\
\mathcal{F}_n^{k-2} \,\,\text{if}\,\,i\,\, \text{is even}\\
\end{cases}$$
\end{theorem}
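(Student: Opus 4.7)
The plan is to reduce the claim to a statement on simple objects: since every object of $\F_n^k$ admits a composition series whose factors are of the form $L_n(\lam)$ (up to parity) with $\sum_j (-1)^{\bar{\lam}_j}=k$, and since $\Theta_i$ is exact, it suffices to verify that for each such simple $L_n(\lam)$, every composition factor of $\Theta_i L_n(\lam)$ lies in $\F_n^{k+2}$ when $i$ is odd and in $\F_n^{k-2}$ when $i$ is even.

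The key input, already exploited in the proof of Lemma \ref{lem:transimple}, is the $\nabla$-filtration of $\Theta_i \nabla(\lam)$ imported from \cite{BDE:periplectic}: there is a short exact sequence
\[
0 \to \Pi^{i+1}\nabla(\lam') \to \Theta_i \nabla(\lam) \to M \to 0
\]
where $M$ is either zero or $\nabla(\lam'')$, with $d_{\lam'}$ (resp.\ $d_{\lam''}$) obtained from $d_\lam$ by moving a black ball from position $i$ to position $i-1$ (resp.\ $i+1$). Since $L_n(\lam)$ is a composition factor of $\nabla(\lam)$ and $\Theta_i$ is exact, any composition factor of $\Theta_i L_n(\lam)$ occurs as a composition factor of $\Theta_i \nabla(\lam)$, hence of $\nabla(\lam')$ or $\nabla(\lam'')$, and so lies in the block determined by $\lam'$ or $\lam''$.

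It remains to compute the block invariants. Writing $k(\mu) = \sum_j (-1)^{\bar\mu_j}$ as the alternating sum of the positions of black balls in $d_\mu$, moving a single ball from position $i$ to position $i\pm 1$ produces the same change
\[
-2(-1)^i \;=\; \begin{cases} +2 & i \text{ odd}, \\ -2 & i \text{ even}.\end{cases}
\]
Hence $k(\lam')=k(\lam'')=k(\lam)-2(-1)^i$, giving exactly the shift claimed in the theorem.

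The main obstacle is therefore not internal to this argument: the nontrivial content is the existence and description of the two-step $\nabla$-filtration of $\Theta_i \nabla(\lam)$, which must be imported from \cite{BDE:periplectic}. Given that ingredient, together with exactness of $\Theta_i$ and the easy alternating-sum computation above, the block-shift assertion is immediate; the only care needed is to check that no further $\nabla$-subquotients of $\Theta_i \nabla(\lam)$ were missed, but this is precisely what the cited filtration rules out.
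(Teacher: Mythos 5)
The paper itself does not supply a proof of this theorem; it is attributed to \cite{BDE:periplectic} by reference only. Your argument is therefore a genuine proof rather than a re-derivation of the paper's, and it is correct. Reducing to simples via exactness of $\Theta_i$ and the additivity of block membership over composition series is valid, and the two-step $\nabla$-filtration
\[
0 \to \Pi^{i+1}\nabla(\lam') \to \Theta_i\nabla(\lam) \to M \to 0,
\qquad M=0 \text{ or } M=\nabla(\lam''),
\]
which you import from \cite{BDE:periplectic}, is the same structural input the paper uses inside its proof of Lemma \ref{lem:transimple}, so nothing exotic is being invoked. Both $\lam'$ and $\lam''$ move a single black ball out of position $i$, so the alternating-sum invariant $k(\mu)=\sum_j(-1)^{\bar\mu_j}$ shifts by $(-1)^{i\pm 1}-(-1)^i=-2(-1)^i$ in both cases — you correctly note this is the crux, since a filtration also allowing moves \emph{into} position $i$ (from $i-1$) would give the opposite shift and spoil the sign. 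The only caveat, which you implicitly handle, is that when $f_\lam(i)\neq 1$ or $f_\lam(i-1)\neq 0$ one has $\Theta_i L_n(\lam)=0$ (Lemma \ref{lem:transimple}(1)) and the claim is vacuous, so the filtration description needs only to be invoked when it is guaranteed to exist. Net: your route is a clean, self-contained derivation from the translation-functor $\nabla$-filtration, whereas the paper simply cites the result; nothing is missing.
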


\subsection{The Duflo-Serganova functor}\label{sec:DS_functor}

Let $n \geq 2$, and let $x\in \p(n)$ be an odd element such that $[x,x]=0$. Let $s := \rk(x)$. We define the following correspondence of vector superspaces:
\begin{definition}[See \cite{DufloSer:functor}]\label{def:DS}
 Let $M \in \F_n$, and consider the complex $$\Pi M \xrightarrow{x} M \xrightarrow{x} \Pi M$$ We define 
$ {DS}_x(M)$ to be the homology of this complex. 
\end{definition}
 
The vector superspace $\p_x:=DS_x\p(n)$ is naturally equipped with a Lie superalgebra structure.\VeraA{ One can check by direct computations that
  $\p_x$ is isomorphic to $\p(n-s)$ where $s$ is the rank of $x$. The above correspondence defines an SM-functor $DS_x:\F_n \to \F_{n-s}$, called the {\it Duflo-Serganova functor}}. Such functors were introduced in \cite{DufloSer:functor}.

  The following lemmata are used extensively throughout this paper.
\InnaD{
\begin{lemma}[See \cite{DufloSer:functor}]
 Given a short exact sequence $$0 \to M_1 \xrightarrow{f} M_2 \xrightarrow{g} M_3 \to 0$$ in $\F_n$, we have an exact sequence
 $$ 0 \to E \to DS_x(M_1) \xrightarrow{DS_x(f)} DS_x(M_2) \xrightarrow{DS_x(g)}  DS_x(M_3) \to \Pi E \to 0$$ for some $E \subset DS_x(M_1)$ in $\F_{n-s}$. 
 
\end{lemma}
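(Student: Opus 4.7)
The plan is to realize $DS_x$ as the homology of a single $2$-periodic complex and then invoke the standard long exact sequence of homology.

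First I would observe that since $[x,x] = 2x^2 = 0$ in $\p(n)$, the action of $x$ satisfies $x^2 = 0$ on every $M \in \F_n$, so one obtains an infinite $2$-periodic complex
$$\cdots \xrightarrow{x} \Pi M \xrightarrow{x} M \xrightarrow{x} \Pi M \xrightarrow{x} M \xrightarrow{x} \cdots$$
whose homology at each copy of $M$ is $DS_x(M)$ and whose homology at each copy of $\Pi M$ is $\Pi DS_x(M)$.

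Applying this construction termwise to $0 \to M_1 \to M_2 \to M_3 \to 0$ produces a short exact sequence of $2$-periodic complexes. The classical long exact sequence of homology, folded using the $2$-periodicity, collapses to a cyclic $6$-term exact sequence
$$DS_x(M_1) \to DS_x(M_2) \to DS_x(M_3) \xrightarrow{\delta} \Pi DS_x(M_1) \to \Pi DS_x(M_2) \to \Pi DS_x(M_3) \xrightarrow{\Pi\delta} DS_x(M_1)$$
which loops back onto itself via $\Pi\delta$, with $\delta$ a single connecting homomorphism (up to applying $\Pi$).

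Finally I would set $E := \operatorname{Ker}(DS_x(f))$ and read off the needed identifications from the $6$-term sequence. Exactness at $DS_x(M_1)$ gives $E = \operatorname{Im}(\Pi \delta)$, which provides the extra left-hand term of the claimed $5$-term sequence. Exactness at $DS_x(M_3)$ gives $\operatorname{coker}(DS_x(g)) \cong \operatorname{Im}(\delta)$, and exactness at $\Pi DS_x(M_1)$ identifies $\operatorname{Im}(\delta) = \operatorname{Ker}(\Pi DS_x(f)) = \Pi E$, yielding the extra right-hand term. The only point needing genuine care is to verify that the two halves of the long exact sequence differ exactly by the parity shift $\Pi$, so that the same $E$ appears on both ends with the correct parities; once sign conventions for $x$ acting on shifted modules are fixed, this is pure bookkeeping and presents no serious obstacle.
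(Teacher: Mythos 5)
Your proposal is correct. The paper does not give an inline proof of this lemma—it cites \cite{DufloSer:functor}—and your argument is precisely the standard one used there: unfold the $2$-periodic complex $\cdots \xrightarrow{x} \Pi M \xrightarrow{x} M \xrightarrow{x} \Pi M \xrightarrow{x} \cdots$, apply the long exact sequence of homology to the termwise short exact sequence of complexes, use $2$-periodicity to collapse it to a $6$-term cyclic sequence exact at every node, and then read off the five-term sequence with $E := \Ker(DS_x(f))$ by chasing through the connecting map; the identifications $E = \Im(\Pi\delta)$, $\operatorname{coker}(DS_x(g)) \cong \Im(\delta) = \Ker(\Pi DS_x(f)) = \Pi E$ are exactly the right ones, and the last equality uses only exactness of $\Pi$.
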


In particular, if $L$ is a simple composition factor of $DS_x(M_2)$, then it is a simple composition factor of $DS_x(M_1)$ or of $DS_x(M_3)$.}
  
\begin{lemma}[See \cite{ES_KW}]\label{cor:DS_translation_commute}
 The functor $DS_x$ commutes with translation functors, that is we have a natural isomorphism of functors $$DS_x {\Theta}^{(n)}_k \stackrel{\sim}{\longrightarrow}
\Theta^{(n-s)}_k DS_x$$ for any $k \in \Z$.

\end{lemma}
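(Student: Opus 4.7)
The plan is to exploit that $DS_x$ is a symmetric monoidal functor and that the translation functors are built from two pieces that both behave well under $DS_x$: tensoring with the standard module $V_n$, and spectral decomposition with respect to the tensor Casimir $\Omega^{(n)}$. Throughout I denote the rank of $x$ by $s$, so that $\p_x \cong \p(n-s)$.

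First I would identify $DS_x(V_n)$ with $V_{n-s}$ as a $\p(n-s)$-module. Since $x$ is odd with $[x,x]=0$, its image is an isotropic subspace of $V_n$ with respect to $\omega$, and a direct computation (using the matrix form in Remark \ref{rmk:basis}) shows that $\Ker x / \Im x$ inherits from $\omega$ a non-degenerate odd symmetric form, canonically isomorphic to the defining form of $\p(n-s)$. This gives a canonical isomorphism $\eta: DS_x(V_n) \xrightarrow{\sim} V_{n-s}$ of $\p(n-s)$-modules. Combined with the monoidal structure of $DS_x$ (which provides a natural isomorphism $DS_x(M \otimes V_n) \cong DS_x(M) \otimes DS_x(V_n)$), this already yields a natural isomorphism
$$DS_x \circ (- \otimes V_n) \;\xrightarrow{\sim}\; (- \otimes V_{n-s}) \circ DS_x.$$

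The next step is to verify that under this isomorphism, the Casimir $\Omega^{(n)}_M$ on $M \otimes V_n$ descends to the Casimir $\Omega^{(n-s)}_{DS_x M}$ on $DS_x(M) \otimes V_{n-s}$. The Casimir is assembled from three canonical pieces: the coevaluation into $\p(n)^* \otimes \p(n)$, the embedding $i_*: \p(n)^* \hookrightarrow \gl(V_n)$, and the two action maps. Each piece is natural with respect to a symmetric monoidal functor applied to the diagram, provided one knows $DS_x(\p(n)) \cong \p(n-s)$ and that $\eta$ intertwines $\omega$ with $\omega_{n-s}$; the involution $\sigma$ defining the orthogonal decomposition $\gl(V_n) \cong \p(n) \oplus \p(n)^*$ commutes with the $x$-action, so $DS_x$ carries this decomposition to the analogous one for $\p(n-s)$. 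Putting these observations together, $DS_x$ intertwines $\Omega^{(n)}$ with $\Omega^{(n-s)}$. Consequently, $DS_x$ sends the generalized $k$-eigenspace of $\Omega^{(n)}$ on $M \otimes V_n$ into the corresponding generalized $k$-eigenspace of $\Omega^{(n-s)}$ on $DS_x(M) \otimes V_{n-s}$, and it does so bijectively because $DS_x$ acts on the generalized eigenspace decomposition term by term (the nilpotent part of $\Omega^{(n)}-k\,\id$ is preserved). This yields $DS_x \Theta'^{(n)}_k \cong \Theta'^{(n-s)}_k DS_x$, and since the parity shift $\Pi^k$ trivially commutes with $DS_x$, the same isomorphism holds for $\Theta^{(n)}_k$ after inserting $\Pi^k$.

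The main obstacle, and the step that most needs care, is the compatibility of the Casimir under $DS_x$ — in particular checking that the canonical embedding $i_*: \p(n)^* \to \gl(V_n)$ is compatible with the corresponding embedding at level $n-s$, because $\p(n)^*$ is not a summand of $\p(n)$ and has to be handled through the trace pairing. Once one has established that $DS_x$ respects both the invariant pairing on $\gl(V_n)$ and the splitting $\gl(V_n) \cong \p(n) \oplus \p(n)^*$, the rest is a diagram chase using the monoidal coherence of $DS_x$; no further combinatorial input is needed.
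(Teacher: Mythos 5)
The paper does not prove this lemma here; it is cited from \cite{ES_KW}, so there is no in-text argument to compare against. Your proposal, however, takes the natural and presumably intended route: identify $DS_x(V_n)\cong V_{n-s}$ via the induced odd form on $\Ker x/\Im x$, use the monoidal structure of $DS_x$ to match $-\otimes V_n$ with $-\otimes V_{n-s}$, then check that $DS_x$ intertwines the tensor Casimirs $\Omega^{(n)}$ and $\Omega^{(n-s)}$ (the crux being that the involution $\sigma$ commutes with $\ad_x$, so the orthogonal splitting $\gl(V_n)\cong\p(n)\oplus\p(n)^*$ along with the trace pairing descends under $DS_x$), and finally pass to generalized eigenspaces. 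Your last step is correct but could be stated a touch more carefully: since $DS_x$ is additive it carries the finite direct sum $M\otimes V_n=\bigoplus_k\Theta'^{(n)}_k(M)$ to a direct sum of submodules of $DS_x(M)\otimes V_{n-s}$, each of which (by the Casimir intertwining) lies inside the corresponding generalized $k$-eigenspace $\Theta'^{(n-s)}_k(DS_x M)$; as both decompositions exhaust the same ambient module and the eigenspaces are disjoint, the containments are equalities. The parity twist $\Pi^k$ evidently commutes with $DS_x$, completing the argument. I see no gap; only the $\Omega$-compatibility (which you flag as the delicate point) warrants an explicit diagram chase, and your sketch of it is sound.
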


\section{The Duflo-Serganova functor: main theorem}\label{sec:DS_main}

Let $x_n\in \p(n)_1, x_n\neq 0$ be an odd element corresponding to the root $2\eps_n$. Then $[x_n,x_n]=0$ and we may define a Duflo-Serganova functor $$DS_{x_n}: \F_n \to  \F_{n-1} $$ as in Section \ref{sec:DS_functor}.

Throughout this section, we will write $DS=DS_{x_n}$ for short.
\subsection{Statement of the theorem}
  Let $ \lam \in \Lambda_n$.
  
  As before, we denote by $L_n(\lam)$ the simple finite-dimensional integrable $\p(n)$-module with an even highest weight vector of weight $\lam$. We consider the simple subquotients of $DS(L_n(\lam))$ in $\F_{n-1}$.

 \begin{theorem}\label{thrm:DS_clusters}
 Let $ \lam \in \Lambda_n$ and $\mu \in\Lambda_{n-1}$.
 
 The following are equivalent:
 \begin{enumerate}
  \item $[DS(L_n(\lam)):\Pi^z L_{n-1}(\mu)] \neq 0$ for some $z \in \Z$.
  \item\label{itm:DS_clusters2} The diagram $d_{\mu}$ is obtained by removing one black ball from position $i$ in $d_{\lam}$, where $i$ satisfies the Initial Segment Condition: $$\forall j> i+1, \; r_{\lam}(j, i+1)\geq 0.$$
  In other words, $\InnaC{f_{\lam}(i+1)}=0$ and there is no solid arrow in $d_{\lam}$ ending in $i+1$. 
  
 \end{enumerate}
 Furthermore, if these conditions hold, then $$[DS(L_n(\lam)):\Pi^z L_{n-1}(\mu)]=1$$ \InnaA{where $i=\bar{\lam}_{n-z}$ (that is, $0\leq z\leq n-1$ and $n-z$ is the sequential number of the removed black ball (counting from the left)).} 
\end{theorem}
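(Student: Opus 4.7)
The plan is to prove the theorem by induction on $\lambda$, using the commutation $DS \circ \Theta_i^{(n)} \cong \Theta_i^{(n-1)} \circ DS$ from Lemma~\ref{cor:DS_translation_commute}. By Lemma~\ref{lem:transimple}, applying $\Theta_i$ to a simple module $L_n(\lambda)$ has the effect of moving a black ball in $d_\lambda$ from position $i$ to position $i-1$ (with a parity shift $\Pi^{i+1}$), so any $L_n(\lambda) \in \F_n$ is reached from a preferred ``base'' simple module in its block through a chain of $\Theta_i$'s. Thus, once we know $DS$ on the base cases and how it transforms through each $\Theta_i$, we control $DS(L_n(\lambda))$ throughout the block, and the cap diagram combinatorics comes out naturally.

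For the base case, I would take in each block $\F_n^k$ a weight $\nu$ whose diagram is ``maximally packed'' (for instance, with $\bar\nu_j = j-1$ in the principal block, so $d_\nu$ has black balls exactly at $0, 1, \ldots, n-1$). For such $\nu$ one can compute $DS(L_n(\nu))$ directly from the definition --- either by analyzing the two-term complex $\Pi M \xrightarrow{x_n} M \xrightarrow{x_n} \Pi M$ on an explicit basis of $L_n(\nu)$, or by exploiting the embedding of $\p(n-1)$ as the centralizer of $x_n$ --- and verify that the result matches the claim, where the Initial Segment Condition singles out exactly the rightmost ball of each black cluster.

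For the inductive step, given $\lambda \in \Lambda_n$, choose $\nu \in \Lambda_n$ and $i \in \Z$ such that $d_\lambda$ is obtained from $d_\nu$ by moving the ball at position $i$ to $i-1$. By Lemma~\ref{lem:transimple}\eqref{itm:transimple_2}, $\Pi^{i+1}L_n(\lambda)$ appears in $\Theta_i L_n(\nu)$ with multiplicity one. By the induction hypothesis, the composition factors of $DS(L_n(\nu))$ are known: they are of the form $\Pi^{z'} L_{n-1}(\sigma)$ for each valid removal $\sigma$ of $\nu$. Feeding each such factor through Lemma~\ref{lem:transimple} produces an explicit list of composition factors of $\Theta_i(DS(L_n(\nu))) = DS(\Theta_i L_n(\nu))$. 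The multiplicity-free property of $\Theta_i$ on simples and the disjointness statement of Theorem~\ref{old_thrm:transl_proj_simples} then let me split these factors cleanly between $DS(L_n(\lambda))$ and $DS(L_n(\lambda''))$ for the other composition factors $L_n(\lambda'')$ of $\Theta_i L_n(\nu)$. Matching cap diagrams via the ball-move $d_\nu \leftrightarrow d_\lambda$ should yield both multiplicity-freeness and the claimed bijection between composition factors and valid removals, while the parity $z$ is tracked through the $\Pi^{i+1}$-twist coming from Lemma~\ref{lem:transimple}\eqref{itm:transimple_2}.

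The main obstacle is the combinatorial bookkeeping: showing that the set of removals satisfying the Initial Segment Condition transforms \emph{exactly} as required under the ball-move $d_\nu \mapsto d_\lambda$, and that any ``spurious'' composition factors of $\Theta_i(DS(L_n(\nu)))$ are absorbed into $DS(L_n(\lambda''))$ for the other composition factors $L_n(\lambda'')$ of $\Theta_i L_n(\nu)$, rather than into $DS(L_n(\lambda))$. The first part requires analyzing how solid arrows reattach when a ball is pushed from $i$ to $i-1$; the second relies on Lemma~\ref{lem:transimple}\eqref{itm:transimple_3}--\eqref{itm:transimple_4} combined with a careful case analysis on cap diagrams to force the perfect matching.
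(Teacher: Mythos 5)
Your strategy runs into a genuine obstruction: the Duflo--Serganova functor is \emph{not} exact, and this is precisely why the ``splitting'' step in your inductive argument fails. From a short exact sequence $0 \to M_1\to M_2\to M_3\to 0$ one only obtains the six-term sequence
$$0\to E\to DS_x(M_1)\to DS_x(M_2)\to DS_x(M_3)\to\Pi E\to 0$$
with $E\subset DS_x(M_1)$ unknown in general. Consequently the composition factors of $DS(\Theta_i L_n(\nu))=\Theta_i(DS(L_n(\nu)))$ are only \emph{contained} in (not equal to) the union of the composition factors of $DS$ applied to each composition factor of $\Theta_i L_n(\nu)$; cancellations coming from $E$ and $\Pi E$ can and do occur. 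The disjointness statement of Theorem~\ref{old_thrm:transl_proj_simples}(2) tells you that the $\Theta_i L_{n-1}(\sigma)$ do not overlap as you range over composition factors $L_{n-1}(\sigma)$ of $DS(L_n(\nu))$, but it gives no mechanism for deciding which of those pieces actually land inside $DS(L_n(\lam))$, and it certainly does not give the crucial ``existence'' direction: that every removal satisfying the Initial Segment Condition contributes a composition factor with multiplicity exactly one. At best your scheme yields an inclusion (an upper bound) on the set of composition factors, and even that only after resolving the well-ordering issue --- the other composition factors $L_n(\lam'')$ of $\Theta_i L_n(\nu)$ need not lie ``closer to the base case'' than $\lam$ (see the appendix tables, where the $j$-th position can move arbitrarily far away), so it is unclear the induction closes.

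The paper's proof is in fact organized to avoid exactly this pitfall. The ``at most'' direction (which half-exactness of $DS$ and the commutation $DS\circ\Theta_i\cong\Theta_i\circ DS$ are well-suited for) is proved first, as three layered statements: Lemma~\ref{lem:DS_cluster_start} (cluster starts are preserved), Proposition~\ref{prop:DS_cluster_growth} (positions only become emptier, via a minimal-counterexample argument on quintuples $(\lam,\mu,i,j,k)$), and Proposition~\ref{prop:DS_nonremovable} (the Initial Segment Condition, obtained by a duality trick through $\lam^\clubsuit$ and applying Proposition~\ref{prop:DS_cluster_growth} to the dual module). The ``at least / multiplicity one'' direction is handled by an entirely separate, non-homological argument: Lemma~\ref{lem:DS_remove_rightmost} computes the rightmost-ball removal directly from a highest-weight-vector analysis, Lemma~\ref{lem:existence1} reduces the general maximal cap to the leftmost ball by decomposing along $h_1$-eigenspaces, and the $\dagger$-Borel twist (Lemmas~\ref{lem:DS_remove_lefttmost}, \ref{lem:existence2}) converts the leftmost case back to a directly-provable one, tracking the parity through \eqref{eq:existence_s_t}. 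You would need something playing the role of this second block; induction through translation functors alone cannot produce it because the information flows only one way through the six-term sequence.
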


\begin{remark}
 For any position $i$ satisfying the Initial Segment Condition \InnaC{\eqref{itm:DS_clusters2}}, and any $j\geq i+1$, we have: in the segment $[i+1, j]$ in $d_{\lam}$ the number of empty positions is greater or equal to the number of black balls in that segment. 
\end{remark}

\begin{proof}[Proof of Theorem \ref{thrm:DS_clusters}]
 The proof goes as follows:
 \begin{enumerate}
  \item Assume $[DS(L_n(\lam)):DS(L_{n-1}(\mu))] \neq 0$. 
  \begin{itemize}
   \item First, we prove: $$f_{\mu}(i-1)=0, \, f_{\mu}(i)=1 \; \Longrightarrow \; f_{\lam}(i-1)=0, \, f_{\lam}(i)=1.$$ In other words, the clusters in $d_{\mu}$ begin in the same positions as in $d_{\lam}$. This is proved in Lemma \ref{lem:DS_cluster_start}.
   \item Secondly, we prove: $$\forall i,\;  f_{\lam}(i)\geq f_{\mu}(i).$$ In other words, if a position in $d_{\lam}$ was empty, so is the corresponding position in $d_{\mu}$. This is proved in Proposition \ref{prop:DS_cluster_growth}.
  \end{itemize}
Hence we conclude: if $[DS(L_n(\lam)):L_{n-1}(\mu)]\neq 0$ then $d_{\mu}$ is obtained from $d_{\lam}$ by removing one black ball from the end of some cluster. 
  \item Next, we prove Proposition \ref{prop:DS_nonremovable}, stating that black balls which do not satisfy the Initial Segment Condition \eqref{itm:DS_clusters2} cannot be removed.
  
  \item We prove Proposition \ref{prop:existence}, which completes the proof of the Theorem.
 \end{enumerate}

\end{proof}
 
\begin{example}\label{ex:DS_computation}
For the weight $$\lam = \eps_3 + 3\eps_4 + 3\eps_5  + 6\eps_6+ 8\eps_7 + 8\eps_8 + 8\eps_9$$ of $\p(9)$, the arrow diagram will be 

$$ \xymatrix@=5pt{  &{} &{} &{} &{} &{} &{} &{} &{} &{} &{} &{} &{} &{} &{} &{}\\  
 &\underset{-1}{\circ} 
&\underset{0}{\bullet} &\underset{1}{\bullet} \ar@/_0.8pc/[ulld] &\underset{2}{\circ} &\underset{3}{\bullet}  &\underset{4}{\circ} &\underset{5}{\circ} &\underset{6}{\bullet}  &\underset{7}{\bullet} \ar@/_0.8pc/[ulld]  &\underset{8}{\circ}  &\underset{9}{\circ} &\underset{10}{\circ} &\underset{11}{\bullet}  &\underset{12}{\circ} &\underset{13}{\circ}  &\underset{14}{\bullet}  &\underset{15}{\bullet} \ar@/_0.8pc/[ulld] &\underset{16}{\bullet} \ar@/_1.3pc/[ulllld] \ar@/_1.8pc/[ulllllld] &\underset{17}{\circ}  }$$

Then the simple factors of $DS_{x_9}(L_9(\lam))$ are $\Pi L_8(\mu_1), L_8(\mu_2), L_8(\mu_3), L_8(\mu_4)$ where
\begin{align*}
 &\mu_1 =  2\eps_2 + 4\eps_3+ 4\eps_4 + 7\eps_5  + 9\eps_6+ 9\eps_7 + 9\eps_8, \\
 &\mu_2 =   4\eps_3+ 4\eps_4 + 7\eps_5  + 9\eps_6+ 9\eps_7 + 9\eps_8, \\ 
 &\mu_3=   \eps_3+ 3\eps_4 + 7\eps_5  + 9\eps_6+ 9\eps_7 + 9\eps_8, \\ 
 &\mu_4 =   \eps_3+ 3\eps_4 + 3\eps_5  + 6\eps_6+ 8\eps_7 + 8\eps_8. \\
\end{align*}
are weights in $\Lambda_8$ with arrow diagrams 
\begin{align*}
 \mu_1  \; &\xymatrix@=5pt{  &{} &{} &{} &{} &{} &{} &{} &{} &{} &{} &{} &{} &{} &{} &{}\\  
 &\underset{-1}{\circ} 
&\underset{0}{\bullet} &\underset{1}{\circ}  &\underset{2}{\circ} &\underset{3}{\bullet}  &\underset{4}{\circ} &\underset{5}{\circ} &\underset{6}{\bullet}  &\underset{7}{\bullet} \ar@/_0.8pc/[ulld]  &\underset{8}{\circ}  &\underset{9}{\circ} &\underset{10}{\circ} &\underset{11}{\bullet}  &\underset{12}{\circ} &\underset{13}{\circ}  &\underset{14}{\bullet}  &\underset{15}{\bullet} \ar@/_0.8pc/[ulld] &\underset{16}{\bullet} \ar@/_1.3pc/[ulllld] \ar@/_1.8pc/[ulllllld] &\underset{17}{\circ}  } \\ 
\mu_2 \; &\xymatrix@=5pt{  &{} &{} &{} &{} &{} &{} &{} &{} &{} &{} &{} &{} &{} &{} &{}\\  
 &\underset{-1}{\circ} 
&\underset{0}{\bullet} &\underset{1}{\bullet} \ar@/_0.8pc/[ulld] &\underset{2}{\circ} &\underset{3}{\circ}  &\underset{4}{\circ} &\underset{5}{\circ} &\underset{6}{\bullet}  &\underset{7}{\bullet} \ar@/_0.8pc/[ulld]  &\underset{8}{\circ}  &\underset{9}{\circ} &\underset{10}{\circ} &\underset{11}{\bullet}  &\underset{12}{\circ} &\underset{13}{\circ}  &\underset{14}{\bullet}  &\underset{15}{\bullet} \ar@/_0.8pc/[ulld] &\underset{16}{\bullet} \ar@/_1.3pc/[ulllld] \ar@/_1.8pc/[ulllllld] &\underset{17}{\circ}  } \\ 
\mu_3  \;  &\xymatrix@=5pt{  &{} &{} &{} &{} &{} &{} &{} &{} &{} &{} &{} &{} &{} &{} &{}\\  
 &\underset{-1}{\circ} 
&\underset{0}{\bullet} &\underset{1}{\bullet} \ar@/_0.8pc/[ulld] &\underset{2}{\circ} &\underset{3}{\bullet}  &\underset{4}{\circ} &\underset{5}{\circ} &\underset{6}{\bullet}  &\underset{7}{\circ}   &\underset{8}{\circ}  &\underset{9}{\circ} &\underset{10}{\circ} &\underset{11}{\bullet}  &\underset{12}{\circ} &\underset{13}{\circ}  &\underset{14}{\bullet}  &\underset{15}{\bullet} \ar@/_0.8pc/[ulld] &\underset{16}{\bullet} \ar@/_1.3pc/[ulllld] \ar@/_1.8pc/[ulllllld] &\underset{17}{\circ}  } \\ 
\mu_4  \;  &\xymatrix@=5pt{  &{} &{} &{} &{} &{} &{} &{} &{} &{} &{} &{} &{} &{} &{} &{}\\  
 &\underset{-1}{\circ} 
&\underset{0}{\bullet} &\underset{1}{\bullet} \ar@/_0.8pc/[ulld] &\underset{2}{\circ} &\underset{3}{\bullet}  &\underset{4}{\circ} &\underset{5}{\circ} &\underset{6}{\bullet}  &\underset{7}{\bullet} \ar@/_0.8pc/[ulld]  &\underset{8}{\circ}  &\underset{9}{\circ} &\underset{10}{\circ} &\underset{11}{\bullet}  &\underset{12}{\circ} &\underset{13}{\circ}  &\underset{14}{\bullet}  &\underset{15}{\bullet} \ar@/_0.8pc/[ulld] &\underset{16}{\circ}  &\underset{17}{\circ}  }.
\end{align*}
\end{example}

We also give a formulation of the theorem using cap diagrams, which will suit our needs better when computing superdimensions.

\InnaC{The following is a rephrasing of} the statement of Theorem \ref{thrm:DS_clusters}, using \InnaC{Corollary} \ref{cor:max_cap_arrow}:
\begin{corollary}\label{cor:DS_caps}
 Let $\lam \in \Lambda_n$, $x\in \p(n)_1$, $\mu\in \Lambda_{n-1}$. The following are equivalent:
 \begin{enumerate}
  \item $[DS(L_n(\lam)):\InnaC{\Pi^z} L_{n-1}(\mu)] \neq 0$ \InnaC{for some $z\in \Z$}.
  \item The diagram $d_{\mu}$ is obtained \InnaC{from} $d_{\lam}$ by removing one maximal cap.
  
 \end{enumerate}
 Furthermore, if these conditions hold, then $[DS(L_n(\lam)):\Pi^z L_{n-1}(\mu)]=1$, where \InnaC{position} $\bar{\lambda}_{n-z}$ is the rightmost end of the removed cap.

\end{corollary}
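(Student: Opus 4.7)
The strategy is to deduce Corollary \ref{cor:DS_caps} directly from Theorem \ref{thrm:DS_clusters} by translating the Initial Segment Condition on a ball position $i$ of $d_{\lam}$ into the purely combinatorial statement that $i$ is the right endpoint of a maximal cap in $d_\lam$. Concretely, the plan is to establish the equivalence: for a position $i$ with $f_\lam(i) = 1$, the cap $(k, i)$ with right endpoint $i$ is maximal if and only if $f_\lam(i+1) = 0$ and no solid arrow of $d_\lam$ ends at $i+1$. Granting this, the reformulation of the removal condition, together with the multiplicity-one statement and the parity identification $\bar{\lam}_{n-z} = i$, is inherited verbatim from the ``furthermore'' clause of Theorem \ref{thrm:DS_clusters}.

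Two structural facts are used throughout. First, the \emph{nesting property} of cap diagrams --- any two caps are either disjoint or nested --- which I would prove by reinterpreting the iterative construction as a left-to-right stack matching (push on each empty position, pop on each ball). Second, the correspondence preceding Corollary \ref{cor:max_cap_arrow}: the left endpoints of non-maximal caps are precisely the targets of solid arrows of $d_\lam$, with each successor cap of a parent $(k', j')$ contributing a solid arrow from $j'$ to its own left endpoint (and, in the long case $k'+1 < j'$, the cap $(k', j')$ itself contributes the longest arrow from $j'$, ending at $k'+1$).

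For the forward direction, suppose $(k, i)$ is a maximal cap. If $f_\lam(i+1) = 1$, the unique cap $(k'', i+1)$ containing the ball at $i+1$ cannot cross $(k, i)$ by nesting, and $k'' = k$ is ruled out since distinct caps have distinct left endpoints; hence $k'' < k$, so $(k, i) \lhd (k'', i+1)$, contradicting maximality. If instead a solid arrow ends at $i+1$, then $i+1$ is the left endpoint of a non-maximal cap $(i+1, j'')$; its parent $(k', j')$ then strictly contains $(k, i)$ by the same nesting argument, again a contradiction.

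For the converse, assume the Initial Segment Condition but suppose $(k, i)$ has an immediate parent $(k', j')$. If $j' = i+1$, the ball at the right endpoint of $(k', j')$ contradicts $f_\lam(i+1) = 0$. Otherwise $j' > i+1$; the stack interpretation shows that every position strictly between $k'$ and $j'$ is covered by a subordinate cap, so the empty position $i+1$ must be the left endpoint of a cap $(i+1, j'')$ which is a successor of $(k', j')$, and the arrow-cap correspondence then yields a solid arrow from $j'$ to $i+1$, contradicting the hypothesis. The main nuisance I anticipate is the careful bookkeeping of the short-cap boundary case $k+1 = j$ appearing in the lemma preceding Corollary \ref{cor:max_cap_arrow}, which must be singled out but introduces no real difficulty.
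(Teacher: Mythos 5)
Your proposal is correct and follows exactly the route the paper intends: the paper dispatches the corollary in one line as ``a rephrasing of Theorem~\ref{thrm:DS_clusters} using Corollary~\ref{cor:max_cap_arrow},'' and you supply the combinatorial translation that this invocation leaves implicit, namely that a ball at position $i$ is the right end of a maximal cap if and only if $f_\lam(i+1)=0$ and no solid arrow ends at $i+1$. Both directions of your argument check out (the nesting property and the arrow-cap dictionary stated in the remark following Example~\ref{ex:wt1_caps} are the correct ingredients, and the small subtlety you noted --- that the cap with left endpoint $i+1$ is necessarily a \emph{successor} of the immediate parent of $(k,i)$ rather than some deeper descendant --- does follow from the fact that any intermediate cap would also sit strictly between $(k,i)$ and its parent, a contradiction), and the ``furthermore'' clause is indeed inherited verbatim once the reformulation is in place.
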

\begin{remark}\label{rmk:z_interpretation}
 Equivalently, $z$ is the number of caps whose right end is (strictly) to the right of the removed cap. 
\end{remark}
\begin{example}
For the weight $$\lam = \eps_3 + 3\eps_4 + 3\eps_5  + 6\eps_6+ 8\eps_7 + 8\eps_8 + 8\eps_9$$ of $\p(9)$ as described in Example \ref{ex:DS_computation}, the cap diagram will be 

$$ \xymatrix@=5pt{  &{} &{} &{} &{} &{} &{} &{} &{} &{} &{} &{} &{} &{} &{} &{}\\  
 &\underset{-2}{} &\underset{-1}{} 
&\underset{0}{} \ar@{-} `u/6pt[l] `/6pt[l] &\underset{1}{} \ar@{-} `u/15pt[l] `/15pt[lll]  &\underset{2}{} &\underset{3}{}  \ar@{-} `u/6pt[l] `/6pt[l]&\underset{4}{} &\underset{5}{} &\underset{6}{} \ar@{-} `u/6pt[l] `/6pt[l]  &\underset{7}{ } \ar@{-} `u/15pt[l] `/15pt[lll] &\underset{8}{}  &\underset{9}{} &\underset{10}{} &\underset{11}{} \ar@{-} `u/6pt[l] `/6pt[l]  &\underset{12}{} &\underset{13}{}  &\underset{14}{} \ar@{-} `u/6pt[l] `/6pt[l] &\underset{15}{} \ar@{-} `u/15pt[l] `/15pt[lll] &\underset{16}{} \ar@{-} `u/25pt[l] `/25pt[lllllll]  }$$

Then the simple factors of $DS_{x_9}(L_9(\lam))$ are $\Pi L_8(\mu_1), L_8(\mu_2), L_8(\mu_3), L_8(\mu_4)$ as in Example \ref{ex:DS_computation}, and the corresponding cap diagrams are as follows:
\begin{align*}
 \mu_1  \; &\xymatrix@=5pt{  &{} &{} &{} &{} &{} &{} &{} &{} &{} &{} &{} &{} &{} &{} &{}\\  
 &\underset{-2}{} &\underset{-1}{} 
&\underset{0}{} \ar@{-} `u/6pt[l] `/6pt[l] &\underset{1}{}   &\underset{2}{} &\underset{3}{}  \ar@{-} `u/6pt[l] `/6pt[l]&\underset{4}{} &\underset{5}{} &\underset{6}{} \ar@{-} `u/6pt[l] `/6pt[l]  &\underset{7}{ } \ar@{-} `u/15pt[l] `/15pt[lll] &\underset{8}{}  &\underset{9}{} &\underset{10}{} &\underset{11}{} \ar@{-} `u/6pt[l] `/6pt[l]  &\underset{12}{} &\underset{13}{}  &\underset{14}{} \ar@{-} `u/6pt[l] `/6pt[l] &\underset{15}{} \ar@{-} `u/15pt[l] `/15pt[lll] &\underset{16}{} \ar@{-} `u/25pt[l] `/25pt[lllllll]  } \\ 
\mu_2 \; &\xymatrix@=5pt{  &{} &{} &{} &{} &{} &{} &{} &{} &{} &{} &{} &{} &{} &{} &{}\\  
 &\underset{-2}{} &\underset{-1}{} 
&\underset{0}{} \ar@{-} `u/6pt[l] `/6pt[l] &\underset{1}{} \ar@{-} `u/15pt[l] `/15pt[lll]  &\underset{2}{} &\underset{3}{}  &\underset{4}{} &\underset{5}{} &\underset{6}{} \ar@{-} `u/6pt[l] `/6pt[l]  &\underset{7}{ } \ar@{-} `u/15pt[l] `/15pt[lll] &\underset{8}{}  &\underset{9}{} &\underset{10}{} &\underset{11}{} \ar@{-} `u/6pt[l] `/6pt[l]  &\underset{12}{} &\underset{13}{}  &\underset{14}{} \ar@{-} `u/6pt[l] `/6pt[l] &\underset{15}{} \ar@{-} `u/15pt[l] `/15pt[lll] &\underset{16}{} \ar@{-} `u/25pt[l] `/25pt[lllllll]  } \\ 
\mu_3  \;  &\xymatrix@=5pt{  &{} &{} &{} &{} &{} &{} &{} &{} &{} &{} &{} &{} &{} &{} &{}\\  
 &\underset{-2}{} &\underset{-1}{} 
&\underset{0}{} \ar@{-} `u/6pt[l] `/6pt[l] &\underset{1}{} \ar@{-} `u/15pt[l] `/15pt[lll]  &\underset{2}{} &\underset{3}{}  \ar@{-} `u/6pt[l] `/6pt[l]&\underset{4}{} &\underset{5}{} &\underset{6}{} \ar@{-} `u/6pt[l] `/6pt[l]  &\underset{7}{ }  &\underset{8}{}  &\underset{9}{} &\underset{10}{} &\underset{11}{} \ar@{-} `u/6pt[l] `/6pt[l]  &\underset{12}{} &\underset{13}{}  &\underset{14}{} \ar@{-} `u/6pt[l] `/6pt[l] &\underset{15}{} \ar@{-} `u/15pt[l] `/15pt[lll] &\underset{16}{} \ar@{-} `u/25pt[l] `/25pt[lllllll]  } \\ 
\mu_4  \;  &\xymatrix@=5pt{  &{} &{} &{} &{} &{} &{} &{} &{} &{} &{} &{} &{} &{} &{} &{}\\  
 &\underset{-2}{} &\underset{-1}{} 
&\underset{0}{} \ar@{-} `u/6pt[l] `/6pt[l] &\underset{1}{} \ar@{-} `u/15pt[l] `/15pt[lll]  &\underset{2}{} &\underset{3}{}  \ar@{-} `u/6pt[l] `/6pt[l]&\underset{4}{} &\underset{5}{} &\underset{6}{} \ar@{-} `u/6pt[l] `/6pt[l]  &\underset{7}{ } \ar@{-} `u/15pt[l] `/15pt[lll] &\underset{8}{}  &\underset{9}{} &\underset{10}{} &\underset{11}{} \ar@{-} `u/6pt[l] `/6pt[l]  &\underset{12}{} &\underset{13}{}  &\underset{14}{} \ar@{-} `u/6pt[l] `/6pt[l] &\underset{15}{} \ar@{-} `u/15pt[l] `/15pt[lll] &\underset{16}{}.   }
\end{align*}
\end{example}

\subsection{Proof of \texorpdfstring{}{Main } Theorem \texorpdfstring{\ref{thrm:DS_clusters}}{}: auxiliary results, part 1}\label{ssec:DS_aux1}

\mbox{}

{\it Throughout this subsection, we consider all modules in $\F_n$, $\F_{n-1}$ up to parity switch.}

\begin{lemma}\label{lem:DS_cluster_start}
Let $L_n(\lam)$ as above. If $[DS(L_n(\lam)):DS(L_{n-1}(\mu))] \neq 0$ then we have: $f_{\mu}(i-1)=0, f_{\mu}(i)=1$ implies $f_{\lam}(i-1)=0, f_{\lam}(i)=1$.

\end{lemma}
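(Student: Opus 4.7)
The plan is to argue by contradiction, using the compatibility of $DS$ with the translation functors $\Theta_i$ (Lemma~\ref{cor:DS_translation_commute}) to transport information about composition factors of $DS(L_n(\lam))$ back and forth between $\F_n$ and $\F_{n-1}$. The point is that the hypothesis $f_\mu(i-1)=0,\ f_\mu(i)=1$ is precisely the combinatorial condition that makes $\Theta^{(n-1)}_i(L_{n-1}(\mu))$ nonzero, while if the analogous condition fails for $\lam$ then $\Theta^{(n)}_i(L_n(\lam))=0$; since $DS$ and $\Theta_i$ commute, these two facts are contradictory once we know $L_{n-1}(\mu)$ occurs as a composition factor of $DS(L_n(\lam))$.

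Concretely, assume for contradiction that $f_\mu(i-1)=0,\ f_\mu(i)=1$ but that $(f_\lam(i-1),f_\lam(i))\neq(0,1)$. By Lemma~\ref{lem:transimple}\eqref{itm:transimple_1} (taking the contrapositive) we get $\Theta^{(n)}_i(L_n(\lam))=0$. Applying $DS$ and then invoking the natural isomorphism $DS\circ\Theta^{(n)}_i\cong\Theta^{(n-1)}_i\circ DS$ of Lemma~\ref{cor:DS_translation_commute}, we obtain
\[
\Theta^{(n-1)}_i\bigl(DS(L_n(\lam))\bigr)\ \cong\ DS\bigl(\Theta^{(n)}_i L_n(\lam)\bigr)\ =\ 0.
\]
Since $\Theta^{(n-1)}_i$ is exact and $L_{n-1}(\mu)$ (possibly up to parity shift) is by assumption a composition factor of $DS(L_n(\lam))$, all composition factors of $\Theta^{(n-1)}_i(L_{n-1}(\mu))$ occur among those of $\Theta^{(n-1)}_i(DS(L_n(\lam)))=0$, so $\Theta^{(n-1)}_i(L_{n-1}(\mu))=0$.

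On the other hand, the condition $f_\mu(i-1)=0,\ f_\mu(i)=1$ lets us apply Lemma~\ref{lem:transimple}\eqref{itm:transimple_2} to $\mu$: with $d_{\mu'}$ obtained from $d_\mu$ by moving the black ball at $i$ to $i-1$, we have $[\Theta^{(n-1)}_i(L_{n-1}(\mu)):\Pi^{i+1}L_{n-1}(\mu')]=1$, in particular $\Theta^{(n-1)}_i(L_{n-1}(\mu))\neq 0$. This contradicts the previous paragraph and completes the argument.

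The proof is essentially a one-step transport argument, so I do not expect a real obstacle; the only thing to be careful about is that we must use \emph{both} directions of Lemma~\ref{lem:transimple}\eqref{itm:transimple_1}--\eqref{itm:transimple_2} (a necessary condition for nonvanishing on the $\lam$-side, and a sufficient condition producing an explicit nonzero composition factor on the $\mu$-side), and that the exactness of $\Theta^{(n-1)}_i$ is what allows the transfer of the vanishing from $DS(L_n(\lam))$ down to its composition factor $L_{n-1}(\mu)$.
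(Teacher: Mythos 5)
Your proof is correct and follows essentially the same route as the paper: assume the contrary, observe $\Theta_i(L_n(\lambda))=0$, commute $DS$ with $\Theta_i$ and use exactness to force $\Theta_i(L_{n-1}(\mu))=0$, then contradict this with the nonvanishing of $\Theta_i(L_{n-1}(\mu))$. The only wrinkle is that you cite Lemma~\ref{lem:transimple}\eqref{itm:transimple_2} to establish that $\Theta_i(L_{n-1}(\mu))\neq 0$, whereas that lemma is stated under the a priori hypothesis that $\Theta_i$ is already nonzero (so the citation is formally circular); the paper instead invokes \cite[Corollary~8.2.2]{BDE:periplectic} directly, which gives the full ``if and only if'' criterion for $\Theta_i$ to annihilate a simple.
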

\begin{proof}
  Assume the contrary. Then there exists a position $i$ which is the beginning  of a cluster in $d_{\mu}$ but not in $d_{\lam}$. 
  
  Apply the translation functor $\Theta_i$ to both $L_n(\lam)$ and $L_{n-1}(\mu)$. By \cite[Corollary 8.2.2.]{BDE:periplectic}, the functor $\Theta_i: \F_m \to \F_m$ ($m \geq 1$) annihilates any simple module $L_m(\tau)$ unless $d_{\tau}$ has a black ball in position $i$ and a white ball in position $i-1$. Hence $$\Theta_i(L_n(\lam))=0, \;\; \Theta_i(L_{n-1}(\mu)) \neq 0.$$
 But $\Theta_i$ is an exact functor, so $\Theta_i(L_{n-1}(\mu))$ is a subquotient of $\Theta_i(DS(L_n(\lam))) \cong DS(\Theta_i(L_n(\lam))) =0$. This contradicts our observation that $\Theta_i(L_{n-1}(\mu)) \neq 0$, and the claim of the Lemma follows.

\end{proof}

\begin{proposition}\label{prop:DS_cluster_growth}
 Assume $\left[DS(L_n(\lam)):L_{n-1}(\mu)\right] \neq 0$.
 
 Then for any $i\in \Z$, we have: $f_\lam(i) \geq f_\mu(i)$. That is, if a position in $d_{\lam}$ was empty, so is the corresponding position in $d_{\mu}$.
\end{proposition}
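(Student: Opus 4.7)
The proof will follow the general strategy of Lemma \ref{lem:DS_cluster_start}, using the commutation of $DS$ with translation functors (Corollary \ref{cor:DS_translation_commute}) together with the exactness of $\Theta_k$ to derive a contradiction. The essential new difficulty is that the direct application of $\Theta_i$ at a mismatch position $i$ annihilates both $L_n(\lam)$ and $L_{n-1}(\mu)$ (since $f_\mu(i-1)=1$ forces $\Theta_i L_{n-1}(\mu)=0$, and $f_\lam(i)=0$ forces $\Theta_i L_n(\lam)=0$); we must therefore perform an auxiliary sequence of translations to reduce to a configuration in which Lemma \ref{lem:DS_cluster_start} directly applies.

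Suppose for contradiction that $f_\mu(i)>f_\lam(i)$ at some position $i$, and let $i^*$ be the leftmost such position. By Lemma \ref{lem:DS_cluster_start} and minimality of $i^*$, one has $f_\mu(i^*-1)=f_\lam(i^*-1)=1$; more generally, $f_\lam(s)=1$ for $s\in[j,i^*-1]$, where $j$ is the start of the $d_\mu$-cluster containing $i^*$. Hence the $d_\lam$-cluster beginning at $j$ is precisely $[j,i^*-1]$, while the $d_\mu$-cluster $[j,k]$ satisfies $k\geq i^*$. We now apply the translations $\Theta_j,\Theta_{j+1},\ldots,\Theta_{i^*-1}$ in succession. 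Using Lemma \ref{lem:transimple}(2) to identify the principal composition factor at each step, one computes that the diagrams $d_\lam$ and $d_\mu$ transform into diagrams $d_{\lam^{(*)}}$ and $d_{\mu^{(*)}}$ with the following features: $d_{\lam^{(*)}}$ has the cluster $[j-1,i^*-2]$ followed by empty positions at $i^*-1$ and $i^*$, while $d_{\mu^{(*)}}$ has clusters $[j-1,i^*-2]$ and $[i^*,k]$. In particular, $d_{\mu^{(*)}}$ has a cluster start at $i^*$, whereas $d_{\lam^{(*)}}$ does not.

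Once we establish that $[DS(L_n(\lam^{(*)})):L_{n-1}(\mu^{(*)})]\neq 0$, Lemma \ref{lem:DS_cluster_start} applied to the pair $(\lam^{(*)},\mu^{(*)})$ delivers the desired contradiction. The main obstacle is precisely this last non-vanishing: each intermediate $\Theta_p$ produces, besides the principal moved weight, additional composition factors governed by Lemma \ref{lem:transimple} parts (3)--(4), and because $DS$ is not exact, composition factors of $DS(\Theta_pL_n(\lam))$ may in principle come from $DS(L_n(\sigma))$ for any such non-principal factor $L_n(\sigma)$ of $\Theta_p(L_n(\lam))$, not only from $DS(L_n(\lam^{(p)}))$. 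To isolate the contribution of the principal factor at each step, we will combine the disjointness statement of Theorem \ref{old_thrm:transl_proj_simples}(2) on the $\p(n-1)$-side (which pins down $L_{n-1}(\mu^{(1)})$ as originating from $\Theta_j L_{n-1}(\mu)$ alone) with the structural constraints in Lemma \ref{lem:transimple}(3)--(4) on the $\p(n)$-side, which rule out the non-principal factors $L_n(\sigma)$ as sources of $L_{n-1}(\mu^{(1)})$ under $DS$.
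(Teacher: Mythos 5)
Your sequence of translation functors $\Theta_j,\Theta_{j+1},\ldots,\Theta_{i^*-1}$ is (modulo a change of names: your $j$ plays the role of the paper's $i$, your $i^*$ the role of the paper's $j$) the same sequence the paper iterates in Lemma~\ref{lem:aux}, and you correctly identify that after this sequence the modified $\mu$-diagram has a cluster-start where the modified $\lambda$-diagram does not, which collides with Lemma~\ref{lem:DS_cluster_start}. So the overall skeleton is right.

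The gap is in how you propose to isolate the principal factor on the $\p(n)$-side at each step. Theorem~\ref{old_thrm:transl_proj_simples}(2) does pin down that $L_{n-1}(\mu^{(1)})$, as a factor of $\Theta_j DS(L_n(\lambda))$, must come from $\Theta_j$ applied to the specific factor $L_{n-1}(\mu)$. But on the $\p(n)$-side, Lemma~\ref{lem:transimple}(3)--(4) only constrain the \emph{weights} $\sigma$ of the non-principal factors of $\Theta_jL_n(\lambda)$; they do not by themselves say that $L_{n-1}(\mu^{(1)})$ cannot be a composition factor of $DS(L_n(\sigma))$. Concretely: if $\sigma\neq\lambda^{(1)}$ with $f_\sigma(j-1)=f_\sigma(j)=0$, part~(4) forces $f_\sigma$ to agree with $f_\lambda$ at all $s\leq j-1$, so $f_\sigma(j-1)=0<1=f_{\mu^{(1)}}(j-1)$. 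The possibility that $[DS(L_n(\sigma)):L_{n-1}(\mu^{(1)})]\neq 0$ with this new ``bad'' position at $j-1$ is exactly what you are trying to exclude, and you cannot invoke the proposition being proved to do so. You would be silently assuming the statement for the pair $(\sigma,\mu^{(1)})$.

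This is precisely why the paper runs the argument on a \emph{global minimal counterexample}. It introduces the measure $k=\#\{s<j: f_\mu(s)=1\}$ (the number of $\mu$-balls strictly left of the leftmost bad position) and takes $(\lambda,\mu,i,j,k)$ with $k$ minimal. When a potential non-principal factor $\nu\neq\lambda'$ arises, the new pair $(\nu,\mu')$ has a bad position at $i-1<j$, and one checks that the associated $k'$ strictly drops; minimality of $k$ then forbids this, so $\nu=\lambda'$ is forced. With this, the iteration keeps the same $k$ (the pair $(\lambda',\mu',i+1,j,k)$ stays in $\mathcal M$) until $i=j$, which is impossible. Your write-up should be restructured around such a minimal counterexample, or around an induction on a decreasing quantity like $k$, rather than trying to eliminate the non-principal factors purely from Lemma~\ref{lem:transimple}(3)--(4); as stated, the plan cannot be completed.
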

\begin{proof} Define $\mathcal M$ as the set of all quintuples $(\lambda,\mu,i,j,k)$ satisfying the following conditions
  \begin{enumerate}
  \item $[DS_x(L_n(\lambda)):\Pi^zL_{n-1}(\mu)]\neq 0$ \InnaD{for some $z\in \{0,1\}$};
  \item $f_\lambda(j)=0$, $f_\mu(j)=1$ and $j$ is minimal with this property \InnaD{(that is, for any $s<j$ we have: $f_{\lam}(s)\geq f_{\mu}(s)$)};
  \item $i\leq j$ and $f_\mu(i)=f_{\mu}(i+1)=\dots=f_{\mu}(j-1)=1$, $f_\mu(i-1)=0$;
  \item $k$ is the number of $s<j$ such that $f_\mu(s)=1$.
  \end{enumerate}
  
  By Lemma \ref{lem:DS_cluster_start} we have that
  \begin{equation}\label{eq:alpha}
    k\geq 1,\quad i<j,\quad f_{\lambda}(i)=f_{\lambda}(i+1)=\dots=f_{\lambda}(j-1)=1.
\end{equation}
  Our goal is to prove
  that $\mathcal M=\emptyset$. Let us assume that $\mathcal M$ is not empty and let $k$ be minimal with property $(\lambda,\mu,i,j,k)\in \mathcal M$
  for some $\lambda,\mu,i,j$.
  Let $\lambda'$ and $\mu'$ be obtained from $\lambda$ and $\mu$ respectively by moving $\bullet$ from $i$ to $i-1$.
  We are going to prove the following
  \begin{lemma}\label{lem:aux} If $(\lambda,\mu,i,j,k)\in \mathcal M$ then $(\lambda',\mu',i+1,j,k)\in \InnaD{\mathcal M}$.
  \end{lemma}
  \begin{proof} By Lemma \ref{lem:transimple} \eqref{itm:transimple_1} $\Theta_i^{(n-1)}(L_{n-1}(\mu))$ has a composition factor $\Pi^{i+1}(L_{n-1}(\mu'))$. This composition factor appears
    in $DS_x(\Theta^{(n)}_i(L_n(\lambda)))$. Therefore it appears in $DS_x(L_n(\nu))$ for some composition factor $L_n(\nu)$ in $\Theta^{(n)}_i(L_n(\lambda))$.
    We claim that $\nu=\lambda'$. Indeed, by Lemma \ref{lem:DS_cluster_start} we have $f_\nu(i)=0$,  $f_\nu(i+1)=1$ since $f_{\mu'}(i)=0$, $f_{\mu'}(i+1)=1$. 
    If $\nu\neq \InnaD{\lambda'}$, Lemma \ref{lem:transimple} \eqref{itm:transimple_3} implies that $f_\nu(i-1)=0 \InnaD{< f_\mu(i-1) = 1}$. 
    
    \InnaD{Let us show that $i-1$ is the minimal position with such property. Indeed, $f_{\nu}(i-1)=f_{\nu}(i) =0$. Hence by Lemma \ref{lem:transimple} \eqref{itm:transimple_4} we have: $$\forall s \leq i-1, \; f_\lam(s) = f_\nu(s)$$
    
    Furthermore, by our assumption $(\lambda,\mu,i,j,k) \in \mathcal M$, so $$\forall s \leq i-1<j, \; f_{\nu}(s) = f_{\lam}(s)\geq f_{\mu}(s) = f_{\mu'}(s).$$
    
    }
    Hence $(\nu,\mu',i',i-1,k')\in \mathcal M$ for some $i'<i-1$ and  $k'<k$.
    Since $k$ is chosen minimal this is impossible.
  \end{proof}
  Proposition follows from this lemma since after applying it several times we get a tuple of the form $(\lambda'',\mu'',j,j,k)\in \mathcal M$ which is impossible by
  (\ref {eq:alpha}).
  \end{proof}

The next statements will rely on the following corollary of Proposition \ref{prop:DS_cluster_growth}:
\begin{corollary}\label{cor:aux_DS_factors}
 If $[DS(L_n(\lam)):L_{n-1}(\mu)]\neq 0$ then $d_{\mu}$ is obtained from $d_{\lam}$ by removing one black ball from the end of some cluster. 
\end{corollary}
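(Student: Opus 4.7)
The plan is to combine the two preceding results \textemdash{} Lemma \ref{lem:DS_cluster_start} on preservation of cluster starts and Proposition \ref{prop:DS_cluster_growth} on growth of empty positions \textemdash{} by a short combinatorial argument. First I would observe that, since $d_\lambda$ has exactly $n$ black balls and $d_\mu$ has exactly $n-1$ black balls, Proposition \ref{prop:DS_cluster_growth} (which gives $f_\lambda(i)\geq f_\mu(i)$ for every $i$) forces equality $f_\lambda(i)=f_\mu(i)$ at every position except a single position $i_0$ where $f_\lambda(i_0)=1$ and $f_\mu(i_0)=0$. In other words, $d_\mu$ is obtained from $d_\lambda$ by removing exactly one black ball, located at some position $i_0$.

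The only remaining point is to show that $i_0$ is the end of a cluster in $d_\lambda$, that is, $f_\lambda(i_0+1)=0$. I would argue by contradiction: suppose $f_\lambda(i_0+1)=1$. Then since $i_0+1\neq i_0$, we have $f_\mu(i_0+1)=f_\lambda(i_0+1)=1$, while $f_\mu(i_0)=0$. By Definition \ref{def:cluster}, position $i_0+1$ is then the start of a cluster in $d_\mu$. Applying Lemma \ref{lem:DS_cluster_start} with $i:=i_0+1$, we conclude that $i_0+1$ is also the start of a cluster in $d_\lambda$, i.e., $f_\lambda(i_0)=0$. This contradicts our choice of $i_0$ satisfying $f_\lambda(i_0)=1$, completing the proof.

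There is no serious obstacle here; the proof is essentially bookkeeping that combines the two lemmas, and the only care needed is to ensure that the position $i_0$ is well-defined and unique, which follows at once from the counting argument. The statement is really just the combinatorial translation of the two preceding technical results into the language of clusters, so writing it up is a matter of a few lines.
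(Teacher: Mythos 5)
Your argument is correct and matches the paper's intent: the Corollary is presented there as an immediate consequence of Lemma \ref{lem:DS_cluster_start} and Proposition \ref{prop:DS_cluster_growth}, and your counting-plus-contradiction bookkeeping is exactly the deduction the paper leaves implicit. The application of Lemma \ref{lem:DS_cluster_start} at position $i_0+1$ to rule out $f_\lambda(i_0+1)=1$ is the right step and closes the argument cleanly.
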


\begin{definition}\label{def:clubsuit}
 Let $\alpha$ be a dominant integral weight for $\p(n)$. Denote by $\alpha^\clubsuit$ the weight whose diagram is obtained from $d_{\alpha}$ by moving each black ball through the longest solid arrow originating at this position.
\end{definition}

\begin{lemma}\label{lem:clubsuit_dual}
 Let $\alpha$ be a dominant integral weight for $\p(n)$. Let $\alpha^*$ be the highest weight of the dual module $L_n(\alpha)^*$. Then $d_{\alpha^*}$ is obtained from $d_{\alpha^\clubsuit}$ by reflecting with respect to position $0$.
\end{lemma}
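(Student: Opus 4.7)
The plan is to combine a character-theoretic description of the contragredient dual with a combinatorial identification of the lowest weight of $L_n(\alpha)$ in terms of the $\clubsuit$-operation. First, I will translate the geometric statement ``reflect through position $0$'' into a coordinate identity: if the black balls of $d_{\alpha^\clubsuit}$ sit at positions $p_1<\dots<p_n$, the reflected diagram has balls at $-p_n<\dots<-p_1$, so the claim is equivalent to
\[
\bar{\alpha^*}_i \;=\; -\,\bar{\alpha^\clubsuit}_{n+1-i} \qquad \text{for all } i=1,\dots,n,
\]
or equivalently to $\alpha^*_i=-\alpha^\clubsuit_{n+1-i}-(n-1)$ after unshifting. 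This reduces the lemma to one identity between dominant weights.

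Second, I will exploit the standard fact that the character of the contragredient dual $L_n(\alpha)^*$ is obtained from $\operatorname{ch} L_n(\alpha)$ by the substitution $e^\mu\mapsto e^{-\mu}$. In the paper's convention ($\mu\geq\lambda$ iff $\mu_i\leq\lambda_i$), this implies $\alpha^*=-w_0(\nu)$, where $\nu$ is the lowest $\p(n)$-weight appearing in $\operatorname{ch} L_n(\alpha)$ and $w_0\in S_n$ is the longest Weyl element of the even part; that is, $\alpha^*_i=-\nu_{n+1-i}$. This converts the problem into one of identifying $\nu$.

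Third, I will identify $\nu$ with $\alpha^\clubsuit$. The lowest weight $\nu$ is obtained by applying to the highest-weight vector $v_\alpha$ a maximal supply of negative root vectors of $\p(n)$, both even ones in $\gl(n)$ and odd ones in $\p(n)_1\cong\Pi S^2\C^n$. The key combinatorial observation is that the extremal sequence of such applications moves each black ball of $d_\alpha$ through its longest solid arrow: by the lemma preceding Corollary~\ref{cor:max_cap_arrow}, this sends the ball at the right end $j$ of a cap $(i,j)$ with $j>i+1$ to position $i+1$, while the balls at the right end of caps of the form $(i,i+1)$ stay put. This is exactly the definition of $\clubsuit$. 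The base case of a single maximal cap can be handled by explicit computation in $L_n$ using the defining odd form $\omega$, and the general case proceeds by induction on the nesting structure of the cap diagram, using compatibility of the $\clubsuit$-move with the poset of successors.

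The main obstacle I expect is the third step: rigorously identifying the lowest weight $\nu$ of $L_n(\alpha)$ as $\alpha^\clubsuit$. The subtlety is that the odd negative root vectors in $\p(n)_1$ do not all act nontrivially on $v_\alpha$, so a careful analysis (using, for instance, the character formula for simple $\p(n)$-modules from \cite{BDE:periplectic}, or an explicit basis on the thin Kac module) is needed to show that the maximal ``ball displacement'' is exactly the one prescribed by the longest solid arrows, neither shorter nor longer. Once this identification is secured, combining it with the coordinate reformulation from the first step and the character computation of the second step yields the reflection identity and hence the lemma.
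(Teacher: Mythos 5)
The paper's own proof is a one-line citation: it invokes Propositions 3.7.1 and 8.3.1 of \cite{BDE:periplectic}, which together identify the highest weight of $L_n(\alpha)^*$ (equivalently of the $\dagger$-twisted module) combinatorially in terms of the solid arrows. Your proposal attempts to reconstruct that content from scratch.

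Your steps 1 and 2 are fine in spirit: reducing the statement to a coordinate identity and observing that $\alpha^*$ is minus the lowest weight of $L_n(\alpha)$ is a valid route. However, there is a genuine gap in step 3, and it is not a minor one: the claim that the lowest weight of $L_n(\alpha)$ is located at the ball positions of $d_{\alpha^\clubsuit}$ \emph{is}, after the elementary repackaging of steps 1--2, logically equivalent to the lemma itself. You acknowledge this yourself (``a careful analysis ... is needed''), but neither the ``explicit computation using $\omega$'' nor the ``induction on the nesting structure'' is carried out or even specified precisely enough to be checkable. Gesturing at the character formula from \cite{BDE:periplectic} or at thin Kac modules does not constitute a proof; it just relocates the gap to an unspecified theorem in the same reference the paper cites. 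As written, the proposal is a proof plan, not a proof, and the proof of the key step is missing.

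Two additional remarks. First, there is a bookkeeping subtlety hidden in the slogan ``$\nu=\alpha^\clubsuit$.'' As elements of $\Lambda_n$ they are \emph{not} equal: for $n=2$, $\alpha=(a,a)$, the module $L_2(\alpha)$ is one-dimensional so its lowest weight is $\alpha$ itself, while $\alpha^\clubsuit=(a-1,a-1)$. What is true is that the ball positions of $\nu$ computed with the \emph{anti-dominant} $\rho$-shift $\nu_i-(i-1)$ agree with the ball positions of $\alpha^\clubsuit$ computed with the usual (dominant) $\rho$-shift. This is exactly consistent with the needed identity $\alpha^*_i=-\alpha^\clubsuit_{n+1-i}-(n-1)$ once one runs through step 2 carefully, but the $(n-1)$-shift needs to appear somewhere, and in your present formulation (``$\alpha^*_i=-\nu_{n+1-i}$'' and ``$\nu=\alpha^\clubsuit$'') it does not; one of the two steps is off by this shift. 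Second, the BDE machinery the paper cites actually proceeds via $\alpha^\dagger$, the highest weight with respect to the Borel $\mathfrak b^\dagger$ with the opposite \emph{odd} part and the same even part; $\alpha^\dagger$ is not the same as the lowest weight $\nu$ (which is extremal with respect to the fully opposite Borel), so if you pursue the lowest-weight route, you should be aware that the reference does not directly hand you the statement you need and you would have to do a further Weyl-group translation.
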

\begin{proof}
 This is a direct consequence of \cite[Propositions 3.7.1, 8.3.1]{BDE:periplectic}.
\end{proof}

\begin{remark}
 \InnaD{In Proposition \ref{prop:existence}, we also use the weight $\alpha^{\dagger}$, defined in \cite[Section 5.3]{BDE:periplectic}. Its weight diagram $d_{\alpha^{\dagger}}$ is obtained from $d_{\alpha^*}$ by reflecting with respect to the position $(n-1)/2$. Hence $d_{\alpha^{\dagger}}$ is a shift of $d_{\alpha^\clubsuit}$ to the right by $n-1$ positions.} 
\end{remark}

\begin{proposition}\label{prop:DS_nonremovable}
 Assume $[DS(L_n(\lam)):L_{n-1}(\mu)] \neq 0$. Then $d_{\mu}$ satisfies the Initial Segment Condition in Theorem \ref{thrm:DS_clusters}\eqref{itm:DS_clusters2}.
\end{proposition}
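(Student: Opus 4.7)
I will argue by contradiction. By Corollary~\ref{cor:aux_DS_factors}, the hypothesis $[DS(L_n(\lam)):L_{n-1}(\mu)] \ne 0$ already forces $d_\mu$ to be obtained from $d_\lam$ by deleting a single black ball at some cluster-end position $i$; so assume this, but that the Initial Segment Condition fails at $i$, meaning there is a solid arrow in $d_\lam$ ending at $i+1$. By the definition of arrows and the cap-arrow dictionary (cf.\ Corollary~\ref{cor:max_cap_arrow} and the Remark preceding it), this is equivalent to the statement that the cap of $d_\lam$ whose right end is at $i$ is \emph{not} maximal: it is contained in some larger cap whose left end lies at $i+1$.

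My plan is to invoke duality between $L_n(\lam)$ and its contragredient $L_n(\lam^*)$. Since the Duflo--Serganova functor is compatible with taking duals, there is a natural isomorphism $DS(L_n(\lam)^*) \cong (DS(L_n(\lam)))^*$ up to a parity shift. This upgrades the hypothesis into the assertion that $L_{n-1}(\mu^*)$ occurs in $DS(L_n(\lam^*))$. By Lemma~\ref{lem:clubsuit_dual}, the diagrams $d_{\lam^*}$ and $d_{\mu^*}$ are obtained from $d_{\lam^\clubsuit}$ and $d_{\mu^\clubsuit}$ by reflection about position $0$, where the $\clubsuit$-operation pushes each black ball through its longest outgoing arrow. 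The target claim is then that under this reflection-$\clubsuit$ process, the non-maximality of the cap of $d_\lam$ ending at $i$ forces $d_{\mu^*}$ to exhibit a fresh cluster-beginning at some position $i'$ which is \emph{not} a cluster-beginning of $d_{\lam^*}$. Applying Lemma~\ref{lem:DS_cluster_start} to the pair $(\lam^*, \mu^*)$ then produces the required contradiction.

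The heart of the argument, and the main technical obstacle, is verifying this last claim: one must track carefully which balls move under $\clubsuit$, and where, when the cap at $i$ is contained in a strictly larger cap. Concretely, if $(k,i)$ is the cap ending at $i$ and $(i+1,j)$ is the cap whose existence witnesses the violation of the Initial Segment Condition, then the $\clubsuit$-operation shifts the ball at $j$ (through its longest outgoing arrow) onto the vacancy at $i+1$ — or further — and the removal of the ball at $i$ breaks this alignment in precisely the way that produces a new cluster on the dual side. This is ultimately an elementary but careful combinatorial book-keeping exercise on the arrow and cap structure of $d_\lam$.

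A parallel alternative, which may be cleaner, is to adapt verbatim the induction used in the proof of Proposition~\ref{prop:DS_cluster_growth}: choose a minimal counterexample (measured, say, by the distance from $i$ to the right end of the offending cap), apply Lemma~\ref{cor:DS_translation_commute} together with Lemma~\ref{lem:transimple} to a well-chosen translation functor $\Theta_{i+1}$ (or $\Theta_j$ for $j$ the end of the offending arrow), and reduce to a strictly smaller failure of the Initial Segment Condition, thereby contradicting minimality.
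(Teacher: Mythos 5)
Your high-level strategy is the same as the paper's: argue by contradiction, dualize via $L_n(\lam)\mapsto L_n(\lam)^*$ and the compatibility of $DS$ with duals, pass to the $\clubsuit$-diagrams via Lemma~\ref{lem:clubsuit_dual}, and then invoke one of the already-proven constraints on how $DS$ affects weight diagrams. So the idea is right. But there are two concrete problems with what you have written.

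First, you invoke Lemma~\ref{lem:DS_cluster_start} where the paper invokes the strictly stronger Proposition~\ref{prop:DS_cluster_growth}. The latter says $f_{\lam^*}(k)\geq f_{\mu^*}(k)$ for \emph{every} $k$, so to derive a contradiction one only needs a single position where $d_{\mu^*}$ has a ball and $d_{\lam^*}$ does not. Your choice forces you to exhibit a full cluster-beginning mismatch, i.e.\ a position $k'$ with $f_{\mu^*}(k')=1$, $f_{\mu^*}(k'-1)=0$, and not both $f_{\lam^*}(k')=1$, $f_{\lam^*}(k'-1)=0$. From the computation one can see that $f_{\mu^*}(-i-1)=1$ and $f_{\lam^*}(-i-1)=0$, but you do \emph{not} a priori control position $-i-2$ in $d_{\mu^*}$ (it may well carry a ball), so the candidate $k'=-i-1$ need not be a cluster-beginning of $d_{\mu^*}$ at all. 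In general, a failure of $f_{\lam^*}\geq f_{\mu^*}$ does not force a cluster-beginning of $d_{\mu^*}$ absent from $d_{\lam^*}$, so this replacement of proposition is not merely cosmetic.

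Second, you flag ``the main technical obstacle'' --- tracking exactly which balls move under $\clubsuit$ and to where --- and then leave it undone. This is precisely where the paper does the work. Its key observation: because $f_\lam(i)=1$, the solid arrow from $j$ to $i+1$ is \emph{not} the longest arrow out of $j$ in $d_\lam$; but after removing the ball at $i$ to form $d_\mu$, that same arrow \emph{is} the longest out of $j$. Hence under $\clubsuit$ the ball starting at $j$ lands at $i+1$ in $d_{\mu^\clubsuit}$ but strictly past $i+1$ in $d_{\lam^\clubsuit}$, yielding $f_{\lam^\clubsuit}(i)=1$, $f_{\lam^\clubsuit}(i+1)=0$ versus $f_{\mu^\clubsuit}(i)=0$, $f_{\mu^\clubsuit}(i+1)=1$; reflecting gives $f_{\lam^*}(-i-1)=0$ and $f_{\mu^*}(-i-1)=1$, which immediately contradicts Proposition~\ref{prop:DS_cluster_growth}. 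Without this one short lemma-backed computation and without switching to the right helper proposition, the proof does not close. (Your alternative induction sketch is also not what the paper does, and is likewise not carried out.)
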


\begin{proof}

\InnaC{By Corollary \ref{cor:aux_DS_factors}, $d_{\mu}$ was obtained from $d_{\lam}$ by removing a single black ball.}

 Assume \InnaC{that the statement of the proposition is false: that is,} $[DS(L_n(\lam)):L_{n-1}(\mu)] \neq 0$ and $d_{\mu}$ was obtained from $d_{\lam}$ by removing a black ball in position $i$, where $i$ satisfies: 
 \begin{itemize}
  \item $f_{\lam}(i)=1$, $f_{\lam}(i+1)=0$.
  \item There exists $j\geq i+1$ such that $\InnaC{r_{\lam}(j+1, i+1) >0}$. That is, the segment $[i+1, j]$ contains more black balls than it has empty positions. 
 \end{itemize}

 Consider the minimal $j \geq i+1$ as above. In that case, we must have: 
  \begin{itemize}
  \item $f_\lam(j)=1$, 
  \item $\InnaC{r_{\lam}(j, i+1) =0}$ (that is, the segment $[i+1, j-1]$ contains equal amounts of black balls and empty positions).
  \item \InnaC{$\forall i<k<j, \; r_{\lam}(k+1, i+1) =0$. That is,} the segment $[i+1, k]$ contains no more black balls than it has empty positions. 
 \end{itemize}
 
 From this, we conclude that in the diagram $d_\lam$, there is a solid arrow from $j$ to $i+1$:
 $$d_\lam = \xymatrix{  &\underset{i}{\bullet} &\underset{i+1}{\circ} &{\ldots}  &\underset{j}{\bullet} \ar@/_0.8pc/[luld]}$$
 
 Since $f_{\lam}(i)=1$, we may conclude that this is {\bf not} the longest solid arrow from $j$ to $i+1$ in $d_{\lam}$.
 
 On the other hand, in $d_{\mu}$, we have:
$f_{\mu}(i)=0$, $f_{\mu}(s)=f_{\lam}(s)$ for any $s\neq i$.
  
  Hence in $d_{\mu}$ we also have a solid arrow from $j$ to $i+1$:
    $$d_\mu = \xymatrix{  &\underset{i}{\circ} &\underset{i+1}{\circ} &{\ldots}  &\underset{j}{\bullet} \ar@/_0.8pc/[luld]}$$
  and it is the longest solid arrow from $j$ to $i+1$ in $d_{\lam}$.
  
  We now construct $\lam^\clubsuit$ and $\mu^\clubsuit$. These are obtained by moving each black ball through the longest solid arrow originating at this position. Hence we have:
  $$d_{\lam^\clubsuit} = \xymatrix{  &\underset{i}{\bullet} &\underset{i+1}{\circ} &{\ldots}  &\underset{j}{\circ} } \;\; \text{ and } \;\;d_{\mu^\clubsuit}  = \xymatrix{  &\underset{i}{\circ} &\underset{i+1}{\bullet} &{\ldots}  &\underset{j}{\circ}}$$
  By the Lemma \ref{lem:clubsuit_dual}, we have: 
   $$d_{\lam^*} = \xymatrix{ &\underset{-i-1}{\circ} &\underset{-i}{\bullet} } \;\; \text{ and } \;\;d_{\mu^*}  = \xymatrix{   &\underset{-i-1}{\bullet} &\underset{-i}{\circ}}$$
   
 Hence $f_{\lam^*}(-i-1)=0$, $f_{\mu^*}(-i-1)=1$.
 
 Yet the $DS$ functor commutes with the duality functor (up to isomorphism), so 
 \begin{align*}
&[DS(L_n(\lam^*)):L_{n-1}(\mu^*)]=[DS(L_n(\lam)^*):L_{n-1}(\mu)^*]=\\&=[DS(L_n(\lam))^*:L_{n-1}(\mu)^*]=[DS(L_n(\lam)):L_{n-1}(\mu)] \neq 0
 \end{align*}

 Hence we may apply Proposition \ref{prop:DS_cluster_growth}, and conclude that $$\forall k\in \Z, \, f_{\lam^*}(k) \geq f_{\mu^*}(k).$$ But this contradicts our previous conclusion that $f_{\lam^*}(-i-1)=0$, $f_{\mu^*}(-i-1)=1$.
 
 This completes the proof of the proposition.
\end{proof}

\subsection{Proof of \texorpdfstring{}{Main } Theorem \texorpdfstring{\ref{thrm:DS_clusters}}{}: auxiliary results, part 2}\label{ssec:DS_aux2}

In this subsection we distinguish between simple representations varying by a parity switch. We will also use cap diagrams instead of arrow diagrams, since they suit our needs better in this instance.

\begin{lemma}\label{lem:DS_remove_rightmost}
  If $d_{\mu}$ is obtained from $d_{\lam}$ by removing the rightmost black ball, then $$[DS(L_n(\lam)):L_{n-1}(\mu)] =1.$$
 \end{lemma}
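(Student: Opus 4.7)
My strategy is to locate an explicit $\mathfrak{p}(n-1)$-highest weight vector of weight $\mu$ inside $DS_{x_n}(L_n(\lam))$, and then to match its contribution against an upper bound on the multiplicity coming from a weight-space dimension count. Both halves rest on one coordinate-sum observation: the positive roots of $\mathfrak{p}(n)$ are $\eps_j-\eps_i$ for $i<j$ (sum of coordinates $0$) and $-\eps_i-\eps_j$ for $i<j$ (sum of coordinates $-2$). Consequently, every weight $\nu$ of $L_n(\lam)$ satisfies $\sum_k \nu_k\leq \sum_k \lam_k$, and in particular neither $\lam+2\eps_n$ nor $\lam-2\eps_n$ occurs as a weight of $L_n(\lam)$.

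For the existence I would take the even highest weight vector $v_\lam\in L_n(\lam)$. Since $x_n v_\lam$ would have to lie in the (non-existent) weight space $L_n(\lam)_{\lam+2\eps_n}$, we have $v_\lam\in\ker(x_n)$; likewise any pre-image of $v_\lam$ under $x_n$ would lie in the (non-existent) weight space $L_n(\lam)_{\lam-2\eps_n}$, so $v_\lam\notin\operatorname{im}(x_n)$. Thus $v_\lam$ descends to a non-zero even class $\bar v_\lam\in DS_{x_n}(L_n(\lam))$ whose $\mathfrak{p}(n-1)$-weight is $\mu=(\lam_1,\ldots,\lam_{n-1})$, since $E_{nn}$ lies outside the Cartan of $\mathfrak{p}(n-1)$. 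To see that $\bar v_\lam$ is a highest weight vector for $\mathfrak{p}(n-1)$, one checks from the matrix description in Remark \ref{rmk:basis} that every positive root vector of $\mathfrak{p}(n-1)$ lifts to a positive root vector of $\mathfrak{p}(n)$ supported in the first $n-1$ indices; such a lift both commutes with $x_n$ and annihilates $v_\lam$. Therefore the $\mathfrak{p}(n-1)$-submodule generated by $\bar v_\lam$ is a highest weight module of highest weight $\mu$ with even highest weight vector, so $L_{n-1}(\mu)$ appears as a composition factor and $[DS_{x_n}(L_n(\lam)):L_{n-1}(\mu)]\geq 1$.

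For the matching upper bound I would show that the $\mu$-weight space of $L_n(\lam)$ under the $\mathfrak{p}(n-1)$-Cartan is exactly $\mathbb{C}\, v_\lam$. A weight $\nu$ of $L_n(\lam)$ with $\nu_i=\lam_i$ for all $i<n$ satisfies $\lam-\nu=t\eps_n$ for some $t$; writing $\lam-\nu=\sum n_\alpha\alpha$ with $n_\alpha\geq 0$ over positive roots, the coordinate-sum identity gives $t=-2\sum n_\alpha^{\mathrm{odd}}\leq 0$, forcing $t=0$ and $n_\alpha^{\mathrm{odd}}=0$ for every odd $\alpha$. Solving the remaining linear system on coordinates $k=1,2,\ldots,n-1$ in turn then forces every $n_\alpha^{\mathrm{ev}}$ to vanish as well, so $\nu=\lam$. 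Hence $\dim DS_{x_n}(L_n(\lam))_\mu\leq \dim L_n(\lam)_\lam = 1$, so $[DS_{x_n}(L_n(\lam)):L_{n-1}(\mu)]\leq 1$, completing the proof.

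I do not foresee a significant obstacle: the entire argument reduces to the coordinate-sum observation and a direct linear computation. The only point demanding care is verifying that the lifts of $\mathfrak{p}(n-1)$-positive root vectors really do lie in $\ker(\operatorname{ad} x_n)$, but this is transparent from the commutator identity, which gives $[E_{ji}^{(0)},x_n]=0$ whenever $i<n$, and the analogous vanishing in $\mathfrak{p}(n)_{-1}$.
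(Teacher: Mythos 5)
Your overall strategy is the same as the paper's: locate the $\mathfrak{b}$-highest weight vector $v_\lam$, show it lies in $\ker(x_n)\setminus\operatorname{im}(x_n)$ and descends to a $\mathfrak{p}(n-1)$-primitive vector of weight $\mu$, and bound the multiplicity above by a one-dimensional weight-space count. However, the coordinate-sum step that is supposed to carry the existence half is wrong as stated, and even after fixing the sign it doesn't do what you want.

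Specifically: with the Borel $\mathfrak b$ chosen in the paper, the positive odd roots are $-\eps_i-\eps_j$ (coordinate sum $-2$), so a weight $\nu=\lam-\sum n_\alpha\alpha$ of $L_n(\lam)$ satisfies $\sum_k\nu_k=\sum_k\lam_k+2\sum n_\alpha^{\mathrm{odd}}\geq \sum_k\lam_k$, \emph{not} $\leq$. This corrected inequality rules out $\lam-2\eps_n$ (coordinate sum decreases) but says nothing about $\lam+2\eps_n$ (coordinate sum increases), while your version rules out $\lam+2\eps_n$ and not $\lam-2\eps_n$; neither the inequality you wrote nor the corrected one excludes both at once. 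In the upper-bound paragraph the same issue reappears: from $t=-2\sum n_\alpha^{\mathrm{odd}}\leq 0$ you assert ``forcing $t=0$,'' but $t\leq 0$ does not force $t=0$; you must first run the coordinate-by-coordinate elimination (which you do mention afterwards) to conclude every $n_\alpha$ vanishes and hence $t=0$. The paper avoids both problems by using the sharper observation that every weight of $L_n(\lam)$ has the form $\lam+\sum s_{ij}(\eps_i+\eps_j)+\sum_{i<j}t_{ij}(\eps_i-\eps_j)$ with $s_{ij}\in\{0,1\}$, $t_{ij}\geq 0$; a partial-sum (or coordinate-by-coordinate) argument on this expression simultaneously rules out $\lam\pm 2\eps_n$ as weights and shows that $\lam$ is the unique weight agreeing with $\lam$ in the first $n-1$ coordinates. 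Your later linear-system argument is essentially this; if you promoted it to do the work for both halves (instead of leaning on a mis-signed coordinate-sum inequality for the existence half), the proof would close correctly.
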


 \begin{proof}
The module $L_n(\lam)$ is a highest weight module with respect to the Borel subalgebra $\fb'=\fb_0 \oplus \p(n)_1 \subset \p(n)$. The roots corresponding to $\p(n)_{-1}$ are $-\eps_i - \eps_j$ for $\eps_i \neq \eps_j$. 
  
Thus we have the following observation: any weight $\alpha$ in $L_n(\lam)$ can be written as $$\alpha = \lam +\sum_{1\leq i \neq j\leq n} s_{ij} (\eps_i + \eps_j)  +\sum_{1\leq i < j\leq n} t_{ij} (\eps_i - \eps_j)$$ for some $s_{ij} \in \{0,1\}$  and $t_{i j} \geq 0$.

Now, we show that $[DS(L_n(\lam)):L_{n-1}(\mu)]\leq  1$. Indeed, given a weight $\alpha$ in $L_n(\lam)$ such that $\alpha_i=\lam_i$ for $i <n$, we necessarily have $\alpha=\lam$ by the observation above. The weight $\lam$ appears in $L_n(\lam)$ with multiplicity $1$, hence $[DS(L_n(\lam)):L_{n-1}(\mu)]\leq  1$.
 
Finally, we show that $[DS(L_n(\lam)):L_{n-1}(\mu)] \neq 0$: 
 Let $v \neq 0$ be the (even) highest weight vector in $L_n(\lam)$ with respect to the Borel subalgebra $\fb$. Then $x.v$ must have weight $\lam + 2\eps_n$, which by the observation above is not a weight of $L_n(\lam)$. Hence $x.v=0$. 
 
 Now, assume that $v\in \Im(x)$. Let us write $v = x.w$ for some $w\in L_n(\lam)$. Then 
$w$ has weight $\lambda -2\eps_n$, which by the reasoning above is impossible. Hence $v \notin \Im(x)$. This implies that $v$ has non-zero (even) image $\tilde{v}$ in $DS(L_n(\lam)) = \Ker(x)/\Im(x)$, and its image has weight $\mu$. 

Now, the vector ${v}$ is a primitive vector with respect to the Borel ${\fb}$, hence the (even) vector $\tilde{v}$ is a primitive vector with respect to the Borel $\tilde{\fb}$, as required.
This completes the proof of the lemma.
\end{proof}

\begin{proposition}\label{prop:existence} Let $d_{\mu}$ be obtained from $d_{\lambda}$ by removing a black ball whose cap is maximal. Then 
  there exists a unique $z \in \Z/2\Z$ such that $[DS(L_n(\lam)):\Pi^z L_{n-1}(\mu)]=1$, moreover $z$ equals the parity of number of balls to the right of
  the removed ball.
\end{proposition}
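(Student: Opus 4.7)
The plan is to induct on $z$, the number of balls in $d_\lambda$ strictly to the right of the removed position $i$, using Lemma \ref{lem:DS_remove_rightmost} as the base case. In the base case $z=0$ the removed ball is the rightmost in $d_\lambda$, and Lemma \ref{lem:DS_remove_rightmost} gives $[DS(L_n(\lambda)):L_{n-1}(\mu)]=1$ with no parity shift, matching the claim. Uniqueness of $z\in\Z/2\Z$ is automatic, since $L_{n-1}(\mu)\not\cong\Pi L_{n-1}(\mu)$ in $\F_{n-1}$, so at most one parity can yield multiplicity one.

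For the inductive step with $z\geq 1$, the idea is to produce an auxiliary weight $\tilde\lambda$ in which the ball being removed to form $\mu$ is the rightmost, then transport the multiplicity and parity information from $\tilde\lambda$ back to $\lambda$ via a chain of translation functors. Concretely, one can take $\tilde\lambda$ whose balls are those of $\mu$ together with a single new ball placed strictly to the right of all of them, and then walk this new ball leftward one step at a time using translations $\Theta_k$; when the walk is blocked by one of the balls $p_j$ lying strictly between $i$ and the new ball, one executes a short ``hop past'' sequence of translations that shifts the walking ball across $p_j$. By Lemma \ref{lem:transimple}, each $\Theta_k$ contributes a composition factor of the form $\Pi^{k+1}L_n(\cdot)$ with multiplicity one, and by Theorem \ref{old_thrm:transl_proj_simples} these translations applied to non-isomorphic simples have disjoint composition factors, so spurious contributions are excluded. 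Combining this with the commutation $DS\circ\Theta_k\cong\Theta_k\circ DS$ (Lemma \ref{cor:DS_translation_commute}), the multiplicities and parities transfer cleanly: Lemma \ref{lem:DS_remove_rightmost} applied to $\tilde\lambda$ yields multiplicity one with zero parity shift, which, tracked back through the chain, gives multiplicity one for $L_n(\lambda)$ with a cumulative parity shift equal to the sum of the contributing $\Pi^{k+1}$ factors. A careful accounting shows this telescoping sum reduces modulo $2$ to precisely $z$.

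The main obstacle will be the parity bookkeeping together with verifying that the chosen chain of translation functors is always legal, i.e., that at each step the required $f(k)=1$ and $f(k-1)=0$ conditions hold on the intermediate weight, and that the cap containing the tracked ball remains maximal throughout (so that both Corollary \ref{cor:aux_DS_factors} and the induction hypothesis can be applied at the intermediate weights). A secondary subtlety is confirming that each ``hop past'' a ball contributes exactly one factor of $\Pi$ to the total parity shift, so that $z$ hops produce parity $\Pi^z$ as claimed, consistent with the geometric interpretation of $z$ as the number of caps whose right end is strictly to the right of the removed cap (Remark \ref{rmk:z_interpretation}).
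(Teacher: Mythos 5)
Your approach is genuinely different from the paper's, and it has real gaps. Let me first describe the difference, then explain why I think your strategy is not sound as stated.

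The paper does not walk a ball leftward by translation functors. Instead it proves Lemma \ref{lem:existence1}, which asserts that removing the leftmost ball simultaneously from $d_\lambda$ and $d_\mu$ leaves $[DS(L_n(\lam)):\Pi^z L_{n-1}(\mu)]$ unchanged; this is proved by decomposing $L_n(\lam)$ into $h_1$-eigenspaces and identifying the lowest eigenspace with a $\p(n-1)$-simple, which uses no translation functors at all. Iterating this lemma reduces the whole proposition to the case where the removed ball is the \emph{leftmost} ball of $d_\lam$. The paper then handles this case by switching to the Borel $\mathfrak b_n^\dagger$ with simple roots $2\eps_1, \eps_2-\eps_1,\ldots$: Lemma \ref{lem:DS_remove_lefttmost} gives the multiplicity, and Lemma \ref{lem:existence2} computes, via the $D_{a,b}$-operations, that the highest weight shifts by exactly $n-1$ positions when the removed cap is maximal, which produces the parity $z\equiv n-1 \pmod 2$. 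This duality trick avoids both the path of intermediate weights and any parity accounting along the path.

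Now to your proposal. There are three places I think it does not close.

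First, the ``hop past'' maneuver is not defined, and defining it is the heart of the matter. When the walking ball reaches a position $k$ with $f(k-1)=1$, $\Theta_k$ does not move that ball. Any sequence that temporarily displaces the blocking ball changes the intermediate weight diagram in ways that alter its cap structure; whether the cap of the tracked ball stays maximal, and whether the Initial Segment Condition continues to hold, is exactly what you would need to re-verify at every intermediate step, and you have not done so.

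Second, the multiplicity transfer across a chain of translations is more delicate than your sketch allows. Lemma \ref{lem:transimple} only guarantees that $\Pi^{k+1}L_n(\lam')$ appears in $\Theta_k(L_n(\lam))$ with multiplicity one; it says nothing about the other composition factors of $\Theta_k(L_n(\lam))$, and those carry their own contributions to $DS(\Theta_k(L_n(\lam)))$. Theorem \ref{old_thrm:transl_proj_simples}(2) gives disjointness between $\Theta_i(L)$ and $\Theta_i(L')$ for a single functor $\Theta_i$ applied to non-isomorphic simples; it does not separate the contributions that pile up inside a composite $\Theta_{k_m}\cdots\Theta_{k_1}(L_n(\tilde\lambda))$. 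Since $DS$ is only half-exact (it satisfies a six-term sequence, not exactness), you cannot pass multiplicities through this tower without controlling all the intermediate composition factors, which is essentially as hard as the proposition you are trying to prove.

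Third, the parity ``telescoping'' claim is not justified and I do not believe it simplifies to $z$ in the way you describe. The parity contribution of a single step $\Theta_k$ is $\Pi^{k+1}$, which depends on the absolute position $k$, not on whether a ball was hopped. A walk of $m$ consecutive leftward steps starting at $p$ contributes $\sum_{k=p-m+2}^{p+1}k \pmod 2$, which depends on both $p$ and $m$; and the parity contributed by whatever sequence you use to hop past a ball is unexamined. The final answer $z(\lam,\mu)$ must of course come out right if the proposition is true, but asserting that ``a careful accounting shows this telescoping sum reduces modulo $2$ to precisely $z$'' is exactly the accounting your proof needs but does not supply. By contrast, the paper's Lemma \ref{lem:existence2} isolates the parity computation as a single clean combinatorial fact ($\bar{\lam}^\dagger_1 - \bar{\lam}_1 = n - m_1 - 1$) whose proof is a short induction with no path dependence at all.

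In short: the reduction to the leftmost ball via parabolic decomposition (Lemma \ref{lem:existence1}) and the passage to the dagger Borel are the two ideas that make the paper's proof go, and your proposal does not have a substitute for either.
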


In order to prepare for the proof of Proposition \ref{prop:existence}, we begin by proving the following.
  \begin{lemma}\label{lem:existence1}  Let $n>1$. Suppose that $d_\lam$ and $d_\mu$ have the \VeraB{leftmost} ball in the same position
      and $d_{\lam'}$ $d_{\mu'}$ are obtained from $d_\lam$ and $d_\mu$ by removing this ball. Then we have
      $$[DS(L_n(\lam)):\Pi^z L_{n-1}(\mu)]=[DS(L_{n-1}(\lam')):\Pi^z L_{n-2}(\mu')]$$
      \InnaC{where $z$ as in Proposition \ref{prop:existence}.}
      \end{lemma}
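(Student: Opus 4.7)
The hypothesis $\bar\lam_1=\bar\mu_1$ amounts to $\lam_1=\mu_1=:p$, and the weights $\lam'\in\Lam_{n-1}$, $\mu'\in\Lam_{n-2}$ are given by $\lam'_i=\lam_{i+1}+1$, $\mu'_i=\mu_{i+1}+1$. My plan is to pass to the $E_{11}$-weight grading, where $E_{11}\in\gl(n)\subset\p(n)_{\bar 0}$ is the diagonal matrix unit in the first slot, and to embed $\p(n-1)\hookrightarrow\p(n)$ as the centralizer of $E_{11}$ modulo $\C E_{11}$, i.e.\ the subalgebra acting trivially on the first coordinate. Since $x_n$ corresponds to the root $2\eps_n$, it lies in this copy of $\p(n-1)$ and coincides there with the Duflo--Serganova element $x_{n-1}$ for $\p(n-1)$. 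Moreover $E_{11}$ commutes with $x_n$ and projects to the Cartan generator $E_{11}^{(n-1)}$ of $DS(\p(n))\cong\p(n-1)$, so the $E_{11}$-grading $M=\bigoplus_c M^{E_{11}=c}$ of any $M\in\F_n$ descends to $DS_{x_n}(M)$, giving
\[
DS_{x_n}(M)^{E_{11}=c}=DS_{x_{n-1}}(M^{E_{11}=c})\quad\text{as $\p(n-2)$-modules.}
\]

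The core step is to identify $L_n(\lam)^{E_{11}=p}\cong L_{n-1}(\lam_2,\dots,\lam_n)$ as $\p(n-1)$-modules, and similarly $L_{n-1}(\mu)^{E_{11}=p}\cong L_{n-2}(\mu_2,\dots,\mu_{n-1})$. Every positive root of $\p(n)$ has non-positive $\eps_1$-component, so $p=\lam_1$ is the minimal $E_{11}$-eigenvalue on $L_n(\lam)$, and $L_n(\lam)^{E_{11}=p}=U(\p(n-1))\cdot v_\lam$ is a cyclic highest-weight $\p(n-1)$-module with head $L_{n-1}(\lam_2,\dots,\lam_n)$. Any $\p(n-1)$-primitive vector $w$ in this eigenspace is automatically $\p(n)$-primitive: the remaining positive roots of $\p(n)$ have strictly negative $\eps_1$-component and would otherwise send $w$ below the minimum. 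Hence $w\in\C v_\lam$, so the socle of $L_n(\lam)^{E_{11}=p}$ is also a single copy of $L_{n-1}(\lam_2,\dots,\lam_n)$; combined with $\Ext^1_{\F_{n-1}}(L_{n-1}(\tau),L_{n-1}(\tau))=0$ in the highest-weight category $\F_{n-1}$, this forces the module to be simple. Parity is preserved throughout since the relevant highest-weight vectors are even.

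To close, $p$ is the minimum $E_{11}$-weight on $DS_{x_n}(L_n(\lam))$ (inherited from $L_n(\lam)$) and on $L_{n-1}(\mu)$ (since $\mu_1=p$); a standard filtration argument then equates the multiplicity of $\Pi^z L_{n-1}(\mu)$ in $DS_{x_n}(L_n(\lam))$ with the multiplicity of $\Pi^z L_{n-2}(\mu_2,\dots,\mu_{n-1})$ in $DS_{x_n}(L_n(\lam))^{E_{11}=p}\cong DS_{x_{n-1}}(L_{n-1}(\lam_2,\dots,\lam_n))$. Tensoring by the one-dimensional character $\det_{n-1}$ of $\p(n-1)$ of weight $\eps_1+\dots+\eps_{n-1}$ (induced by $X\mapsto\Tr(A)$, which vanishes on $[\p(n-1),\p(n-1)]$ by a direct check) converts $L_{n-1}(\lam_2,\dots,\lam_n)$ into $L_{n-1}(\lam')$ and $L_{n-2}(\mu_2,\dots,\mu_{n-1})$ into $L_{n-2}(\mu')$; since $DS_{x_{n-1}}(\det_{n-1})\cong\det_{n-2}$ by symmetric monoidality, this multiplicity is equal to $[DS(L_{n-1}(\lam')):\Pi^z L_{n-2}(\mu')]$, completing the reduction. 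The main obstacle is the simplicity argument for $L_n(\lam)^{E_{11}=p}$; once that identification is established, the rest of the argument is formal from the $E_{11}$-grading and the symmetric monoidality of $DS$.
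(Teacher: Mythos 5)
Your proof is correct and follows essentially the same route as the paper: decompose along the $E_{11}=h_1$-grading, observe that $x_n$ lies in the centralizer of $h_1$ so that $DS$ respects this grading, identify the bottom-graded piece $L_n(\lam)^{h_1=\lam_1}$ with a simple $\p(n-1)$-module (the paper states this more tersely via the parabolic induction picture, while you spell out the determinant twist needed to match the $\rho$-shifted normalization $\lam'$), and then compare multiplicities in the bottom layer. One small remark: the appeal to $\Ext^1_{\F_{n-1}}(S,S)=0$ is unnecessary and, taken alone with ``simple socle and head,'' would not exclude a length-three module of shape $S$--$T$--$S$; your earlier observation that every $\p(n-1)$-primitive vector in the bottom layer is automatically $\p(n)$-primitive, hence a multiple of $v_\lam$, already forces simplicity directly (any nonzero submodule of a finite-dimensional module contains a primitive vector).
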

      \begin{proof} Let $h_1,\dots,h_n$ be the basis in the Cartan subalgebra of $\p(n)$ dual to $\eps_1,\dots\eps_n$.
        We have a decomposition
        $$L_n(\lam)=\bigoplus_{i\geq \lambda_1}L_n(\lam)^i$$
        where $L_n(\lam)^i$ is the eigenspace of $h_1$ with eigenvalue $i$. Every component $L_n(\lambda)^i$ is a module over the centralizer $\mathfrak l$
        of $h_1$. Since $x\in\mathfrak l$ we have
        $$DS(L_n(\lam))=\bigoplus_{i\geq \lambda_1}DS(L_n(\lam)^i).$$
        Note that $\mathfrak l$ is the direct sum $\mathbb Ch_1\oplus \mathfrak l'$ where $\mathfrak l'$ is another copy of $\p(n-1)$ inside $\p(n)$.
        Furthermore, $L_n(\lam)^{\lambda_1}$ is isomorphic $L_{n-1}(\lam')$ since $L_n(\lam)$ is a quotient of the parabolically induced module
        $U(\p(n))\otimes _{U(\mathfrak{b}+\mathfrak{l})}L_n(\lam)^{\lambda_1}$. Now it is clear that if $\mu_1=\lambda_1$ then $L_{n-1}(\mu)$ occurs
        in $DS(L_n(\lam))$ with the same multiplicity as $L_{n-1}(\mu)^{\lambda_1}$ occurs in $DS (L_n(\lam)^{\lambda_1})$. The statement follows.
       \end{proof}

       Consider the Borel subalgebra ${\mathfrak{b}}_n^\dagger$ of $\p(n)$ with simple roots \VeraB{ $2\eps_1,\eps_2-\eps_1,\dots,\eps_n-\eps_{n-1}$}, \InnaB{and the corresponding Borel subalgebra ${\mathfrak{b}}_{n-1}^\dagger$ of $\p(n-1)$ with simple roots $2\eps_1,\eps_2-\eps_1,\dots,\eps_{n-1}-\eps_{n-2}$}.
       Let ${\lam}^\dagger$ denote the highest weight
       of $L_n(\lambda)$ with respect to ${\mathfrak{b}}_n^\dagger$, and similarly for weights of $\p(n-1)$. We will denote by $L_n^\dagger(\nu)$ the simple $\p(n)$-module of highest weight $\nu$ respect to
       ${\mathfrak{b}}_n^\dagger$ having an even highest weight vector, and similarly for simple $\p(n-1)$-modules.

       One readily sees that
  $$L_{n-1}(\mu)\simeq\Pi^sL^\dagger_{n-1}(\mu^{\InnaB{\dagger}}), \quad L_n(\lambda)\simeq \Pi^tL^\dagger_{n}(\lambda^\dagger) $$ where
  \begin{equation}\label{eq:existence_s_t}
s=\sum_{i=1}^{n-1} \mu^\dagger_i-\mu_i,\quad t=\sum_{i=1}^n \lambda^\dagger_i-\lambda_i.   
  \end{equation}

       Let $y\in\p(n)$ be a root vector of weight $2\varepsilon_1$. Then by the same argument as in
       the proof Lemma \ref{lem:DS_remove_rightmost}, we have:
       \begin{lemma}\label{lem:DS_remove_lefttmost}
  Let $d_{\nu}$ be obtained from $d_{\lam^{\InnaB{\dagger}}}$ by removing the leftmost black ball and shifting all other black balls one position left, then $[DS_y(L^\dagger_n(\lam^{\InnaB{\dagger}})):L_{n-1}^\dagger(\nu)] =1$.
\end{lemma}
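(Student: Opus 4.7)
The plan is to adapt the argument of Lemma \ref{lem:DS_remove_rightmost}, working with the Borel ${\mathfrak{b}}_n^\dagger$ in place of the original Borel. The first step is to record the analog of the ``observation'' used there. The negative odd roots of ${\mathfrak{b}}_n^\dagger$ are exactly $\{-\eps_i-\eps_j : 1\leq i<j\leq n\}$ (note that $-2\eps_i$ is not a root of $\p(n)$, since $\p(n)_{-1}\cong\Pi\wedge^2(\C^n)^*$), and the negative even roots are $\{\eps_i-\eps_j : 1\leq i<j\leq n\}$. Hence the PBW theorem implies that every weight $\alpha$ of $L^\dagger_n(\lam^\dagger)$ can be written as
$$\alpha \;=\; \lam^\dagger \;+\;\sum_{1\leq i<j\leq n} t_{ij}(\eps_i-\eps_j) \;-\;\sum_{1\leq i<j\leq n} s_{ij}(\eps_i+\eps_j),$$
with $t_{ij}\in\Z_{\geq 0}$ and $s_{ij}\in\{0,1\}$ (the latter because $\p(n)_{-1}$ is an odd abelian subalgebra, so each of its root vectors squares to zero).

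Next I would prove the following uniqueness statement, parallel to the one in Lemma \ref{lem:DS_remove_rightmost}: if $\alpha$ is any weight of $L^\dagger_n(\lam^\dagger)$ with $\alpha_k = \lam^\dagger_k$ for every $k>1$, then $\alpha = \lam^\dagger$, and the corresponding weight space is one-dimensional, spanned by the highest weight vector $v^\dagger$. This is a routine bookkeeping computation: summing the conditions $\alpha_k - \lam^\dagger_k = 0$ over $k=2,\ldots,n$, the alternating $t$-contributions cancel except for $-\sum_{k\geq 2} t_{1k}$, while the $s$-contributions produce only non-positive terms. Since the total must be zero, all $s_{ij}$ vanish and $t_{1k}=0$ for $k\geq 2$; an inductive unwinding on $k$ then forces the remaining $t_{ij}$ with $2\leq i<j$ to vanish as well, so $\alpha=\lam^\dagger$.

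Given this uniqueness, the conclusion follows as in Lemma \ref{lem:DS_remove_rightmost}. The weights $\lam^\dagger\pm 2\eps_1$ each satisfy $\alpha_k=\lam^\dagger_k$ for all $k>1$ and differ from $\lam^\dagger$, hence are \emph{not} weights of $L^\dagger_n(\lam^\dagger)$. Therefore $y\cdot v^\dagger=0$ (no target weight space) and $v^\dagger\notin y\cdot L^\dagger_n(\lam^\dagger)$ (no source weight space), so the image $\tilde v^\dagger$ of $v^\dagger$ in $DS_y(L^\dagger_n(\lam^\dagger))$ is a nonzero even vector whose weight, viewed in $\p_y\cong\p(n-1)$, is $\nu$. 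Being the image of a vector annihilated by all positive root vectors of ${\mathfrak{b}}_n^\dagger$, the vector $\tilde v^\dagger$ is annihilated by their images in $\p_y$, which give the positive root vectors of ${\mathfrak{b}}_{n-1}^\dagger$; thus $\tilde v^\dagger$ is a ${\mathfrak{b}}_{n-1}^\dagger$-highest weight vector, and $L^\dagger_{n-1}(\nu)$ appears in $DS_y(L^\dagger_n(\lam^\dagger))$ with multiplicity at least one. The same uniqueness shows that the $\nu$-weight space of $DS_y(L^\dagger_n(\lam^\dagger))$ is one-dimensional, so the multiplicity equals one.

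The main obstacle is the bookkeeping in the uniqueness step, and in particular keeping track of the fact that $s_{ij}$ runs only over $i<j$ (rather than $i\leq j$) even though $2\eps_i$ is a positive root of ${\mathfrak{b}}_n^\dagger$: this asymmetry between the positive and negative odd roots is what drives the argument and mirrors the role played by the boundary behavior at $\eps_n$ in the proof of Lemma \ref{lem:DS_remove_rightmost}.
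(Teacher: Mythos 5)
Your proof is correct and follows essentially the same approach as the paper, which in fact only gives a one-line indication here ("by the same argument as in the proof of Lemma \ref{lem:DS_remove_rightmost}"): exploit the fact that the weights of $\p(n)_{-1}$ are $\{-\eps_i-\eps_j: i<j\}$ (no $-2\eps_i$) to show the weight space of $L_n^\dagger(\lam^\dagger)$ with $\alpha_k=\lam^\dagger_k$ for $k>1$ is only the highest weight line, conclude $y\cdot v^\dagger=0$ and $v^\dagger\notin\operatorname{Im} y$, and then identify the resulting nonzero class as a one-dimensional $\fb_{n-1}^\dagger$-singular space of weight $\nu$. Your observation that the asymmetry between $\p(n)_{\pm 1}$ is what drives the bookkeeping is exactly the right point, and your PBW decomposition (with $s_{ij}\in\{0,1\}$ over $i<j$ only) is the correct form of the observation needed for this Borel.
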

\begin{remark} The shift is necessary due to renumeration $2\mapsto 1,\dots,n\mapsto n-1$.
  \end{remark}

 A combinatorial algorithm of computing ${\lam}^\dagger$ in terms of weight diagrams is given in \cite{BDE:periplectic}.
 Enumerate the balls from left to right. Let $1\leq a\leq b\leq n$. Define the operation $D_{a,b}$ on the set of diagrams as follows:
if  the positions next right to both  $a$-th and $b$-th balls in a diagram $d$ are free, then $D_{a,b}(d)$ is obtained by moving both balls one position right.
 Otherwise $D_{a,b}(d)=d$. Then
\VeraB{ $$d_{\lambda^\dagger}=D_{1,2}\dots D_{1,n}D_{2,3}\dots D_{2,n}\dots D_{n-2,n-1}D_{n-2,n} D_{n-1,n}(d_{\lambda}).$$}

\begin{definition}
We will say that a cap $c = (i,j), i<j$ {\it covers} a black ball in a given weight diagram $d_{\lam}$ if the position $k$ of the black ball satisfies: $i<k<j$.
\end{definition}

 \begin{lemma}\label{lem:existence2} We have $\bar\lambda^{\dagger}_1-\bar\lambda_1=n-m_1-1$ where $m_i$ is the number of caps which cover the $i$-th black ball in $d_{\lam}$. In particular,
   if the cap ending at the first black ball is maximal then $\bar\lambda^{\dagger}_1-\bar\lambda_1=n-1$.
 \end{lemma}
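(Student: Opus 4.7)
The plan is to exploit the Remark immediately following Lemma~\ref{lem:clubsuit_dual}, which identifies $d_{\lambda^\dagger}$ with the shift of $d_{\lambda^\clubsuit}$ by $n-1$ positions to the right. Consequently $\bar\lambda^\dagger_1$ equals the leftmost position of a black ball in $d_{\lambda^\clubsuit}$ plus $n-1$, and the task reduces to computing this leftmost position in terms of $\bar\lambda_1$ and $m_1$.

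First I would translate Definition~\ref{def:clubsuit} into cap-diagram language. By the Lemma preceding Corollary~\ref{cor:max_cap_arrow}, for each cap $(i,j)$ of $d_\lambda$ with $j>i+1$ the longest solid arrow emanating from $j$ terminates at $i+1$; in the trivial case $j=i+1$ the ball stays put, which agrees with the same formula. Combining this with Definition~\ref{def:clubsuit}, the ball at the right end $j$ of each cap $c$ of $d_\lambda$ is moved to position $e(c)+1$ in $d_{\lambda^\clubsuit}$, where $e(c)$ denotes the left end of $c$. Consequently the leftmost ball position in $d_{\lambda^\clubsuit}$ equals $\min_c e(c)+1$, the minimum being taken over all caps of $d_\lambda$.

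The main content of the proof is then the purely combinatorial assertion $\min_c e(c)=\bar\lambda_1-m_1-1$. The plan is as follows: since $\bar\lambda_1$ is the leftmost black ball, all positions to its left are empty; the recursive cap construction then forces the innermost cap at the first ball to be $(\bar\lambda_1-1,\bar\lambda_1)$. An induction on nesting depth shows that the left ends of the $m_1$ caps covering the first ball are the consecutive positions $\bar\lambda_1-2,\bar\lambda_1-3,\ldots,\bar\lambda_1-m_1-1$, because at each step of the construction the nearest still-unused empty position lies immediately to the left of the previously used left end (nothing in between could be a black ball). Conversely, any cap $(i,j)$ with $i<\bar\lambda_1-m_1-1$ must satisfy $j>\bar\lambda_1$, since $\bar\lambda_1$ is the leftmost ball, and hence would cover the first ball, contradicting the definition of $m_1$.

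Combining the two steps: the leftmost ball of $d_{\lambda^\clubsuit}$ sits at $\bar\lambda_1-m_1$, so $\bar\lambda^\dagger_1-\bar\lambda_1=(\bar\lambda_1-m_1)+(n-1)-\bar\lambda_1=n-m_1-1$, as claimed. The concluding clause of the Lemma is immediate: the cap ending at the first ball is maximal exactly when no other cap contains it, that is, when $m_1=0$. The main obstacle I anticipate is making the nesting induction rigorous — verifying cleanly that the left ends of the caps containing or ending at the leftmost ball really do form a block of consecutive integers abutting $\bar\lambda_1$ on the left, as opposed to some sparser configuration.
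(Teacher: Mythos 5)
Your proof is correct, but it takes a genuinely different route from the paper's. The paper proves the lemma by induction on $n$, working directly with the algorithmic factorization
$d_{\lambda^\dagger}=D_{1,2}\cdots D_{1,n}\,D_{2,3}\cdots D_{n-1,n}(d_\lambda)$:
it sets $d_\alpha=D_{2,3}\cdots D_{n-1,n}(d_\lambda)$, peels off the leftmost ball to reduce to rank $n-1$, and then tracks explicitly how many of the final $n-1$ operations $D_{1,2},\dots,D_{1,n}$ actually move the first ball, splitting into the cases $m_1>0$ and $m_1=0$. You instead bypass the $D_{a,b}$ bookkeeping entirely by routing through the identification $d_{\lambda^\dagger}=d_{\lambda^\clubsuit}+(n-1)$ from the Remark after Lemma~\ref{lem:clubsuit_dual}, translating $\clubsuit$ into cap language (each ball at the right end $j$ of a cap $(i,j)$ lands at $i+1$), and then computing $\min_c e(c)=\bar\lambda_1-m_1-1$ by showing the left ends of caps lying to the left of $\bar\lambda_1$ form a consecutive block. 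The ``nesting induction'' you flag as a possible obstacle is in fact fine: since every right end is $\geq\bar\lambda_1$, any cap with left end $<\bar\lambda_1$ other than $(\bar\lambda_1-1,\bar\lambda_1)$ is a covering cap, and the requirement in the cap construction that all intermediate positions be already covered forces those left ends to fill the interval $\{\bar\lambda_1-1,\dots,\bar\lambda_1-m_1-1\}$ with no gaps. Your approach is shorter and avoids induction, at the cost of leaning on the $\clubsuit$\,/$\dagger$ dictionary (stated in the paper as a remark citing BDE); the paper's proof is self-contained in the $D_{a,b}$ formalism and doesn't need that remark. Both are valid.
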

 
 \begin{proof} One proves the statement by induction on $n$. 
 \InnaC{{\bf Base:} let $n=1$. Then $m_1=0$ and $\bar\lambda^{\dagger}_1-\bar\lambda_1$ as required.
 
 {\bf Step:} Let $n>1$ and assume the statement holds for $n-1$.}
 
 Let $\alpha \in \Lambda_n$ be the weight defined by
   $$d_{\alpha}:=D_{2,3}\dots D_{2,n}\dots D_{n-1,n}(d_{\lambda})$$
\InnaC{and let $\lambda', \alpha' \in \Lambda_{n-1}$ be the weights whose diagrams $d_{\lambda'}, d_{\alpha'}$  are obtained from $d_{\lam}, d_{\alpha}$ respectively by removing the leftmost black ball in each diagram. Then $\alpha' = \lambda'^{\dagger}$, so by the induction assumption, we have: $$\bar{\alpha'}_1 - \bar{\lambda'}_1 =  \bar\alpha_2-\bar\lambda_2=n-2-m_2.$$}

   Now, consider first the case when $m_1>0$. Then $m_2>0$ and  $\bar\lambda_2-\bar\lambda_1=m_2-m_1+\VeraB{2}$.
   Recall that we have:  
   $\bar\alpha_2-\bar\lambda_2=n-2-m_2$ and hence $\bar\alpha_2-\bar\lambda_1=\InnaB{n-m_1}$. Using
   $d_{\lambda^\dagger}=D_{1,2}\dots D_{1,n}(d_{\alpha})$ we get that we can move the first ball until it stays next to the second ball of $d_\alpha$, namely exactly
   $n-1-m_1$ times. Hence $\bar\lambda^{\dagger}_1-\bar\lambda_1=n-m_1 \InnaB{-1}$.

   Now let $m_1=0$. Then $\bar\lambda_2-\bar\lambda_1\geq m_2+1$. Recall that we have: $\bar\alpha_2-\bar\lambda_2=n-2-m_2$ and hence
   $\bar\alpha_2-\bar\lambda_1\geq n-1$. Hence we move the first ball $n-1$ times.
  \end{proof}
  Now we are ready to prove Proposition \ref{prop:existence}:
  
  \begin{proof}[Proof of Proposition \ref{prop:existence}]
Note that the fact that a ball is the end of a maximal cap depends only on positions of the black balls to its right. Therefore Lemma~\ref{lem:existence1}
       implies that it suffices to prove the statement of Proposition \ref{prop:existence} in the case when the removed ball is the leftmost black ball in the diagram $d_\lambda$. 
       
       Assume $d_\mu$ is of this form: namely, $d_{\mu}$ is obtained from $d_\lambda$ by removing the leftmost black ball (from position $\lambda_1$). Since $\lam, \mu$ should satisfy the condition of Proposition \ref{prop:existence}, the cap ending in position $\lambda_1$ is maximal, hence $m_1=0$ in the notation of Lemma \ref{lem:existence2}.
       
       Let $d_\nu$ be the diagram obtained from $d_{\lam^\dagger}$ as in Lemma \ref {lem:DS_remove_lefttmost}.
  Then we have \InnaB{$\nu = \mu^{\dagger}$} and $$[DS_yL_n^{\dagger}(\lambda^{\InnaB{\dagger}}):L_{n-1}^\dagger(\nu)] = [DS_yL_n^{\dagger}(\lambda^{\InnaB{\dagger}}):L_{n-1}^\dagger(\mu^{\dagger})]=1.$$ 
  Note that $DS_y$ and $DS = DS_x$ are isomorphic functors since $y$ and $x$ are conjugate by the adjoint action of $GL(n)$. Let $t,s $ as in \eqref{eq:existence_s_t}. We obtain:
$$ \InnaB{[DS L_n(\lambda):L_{n-1}(\mu)] =  [\Pi^{t} DS_yL_n^{\dagger}(\lambda^{\dagger}): \Pi^{s}L_{n-1}^\dagger(\mu^{\dagger})]}.$$ Observing that $t-s=n-1$ gives us the required statement.
 
  \end{proof}

\subsection{Action of the \texorpdfstring{$DS$}{Duflo-Serganova} functor: corollaries}\label{ssec:corollaries_DS}

Let $x_n\in \p(n)_1$, and $DS=DS_{x_n}$ as before. The following are direct corollaries of Theorem \ref{thrm:DS_clusters}:
\begin{corollary}\label{cor:comp_factors_DS}
 Let $\lam \in \Lambda_n$. The number of composition factors of $DS(L_n(\lam))$ is precisely the number of maximal arrows (or maximal caps).
\end{corollary}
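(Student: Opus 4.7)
The corollary is a direct counting consequence of the bijective description of composition factors furnished by Corollary~\ref{cor:DS_caps}, so the plan is simply to read off the count from that bijection.

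First I would observe the following. By Corollary~\ref{cor:DS_caps}, assigning to each maximal cap $c$ in $d_\lam$ the pair $(\mu_c, z_c)$, where $d_{\mu_c}$ is obtained from $d_\lam$ by removing $c$ and $z_c$ is determined by the position of the right end of $c$, yields a well-defined map from the set of maximal caps of $d_\lam$ to the set of isomorphism classes of simple composition factors of $DS(L_n(\lam))$. The same corollary asserts that this map is a bijection, and, crucially, that each such factor $\Pi^{z_c}L_{n-1}(\mu_c)$ appears in $DS(L_n(\lam))$ with multiplicity exactly $1$. Therefore the total number of composition factors of $DS(L_n(\lam))$, counted with multiplicity, equals the number of maximal caps in $d_\lam$.

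For the parenthetical identification with maximal arrows, I would invoke Corollary~\ref{cor:max_cap_arrow}: each maximal cap $(i,j)$ with $j>i+1$ produces a maximal solid arrow from $j$ to $i+1$. Conversely, by the Lemma immediately preceding Corollary~\ref{cor:max_cap_arrow}, every solid arrow originating at a black ball $j$ factors through the unique cap ending at $j$, and is dominated by an arrow coming from any larger cap containing it; in particular a maximal arrow must arise from a maximal cap. This pairs maximal caps with maximal arrows and completes the identification. There is no real obstacle here, since all substantive work was already carried out in the proof of Theorem~\ref{thrm:DS_clusters} and in its cap-diagram reformulation Corollary~\ref{cor:DS_caps}; the present statement is merely the cardinality consequence of that bijection.
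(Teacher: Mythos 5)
Your proof is correct and follows exactly the route the paper takes: the paper states this as a ``direct corollary'' of Theorem~\ref{thrm:DS_clusters}, and all the content is in reading off the count from Corollary~\ref{cor:DS_caps} (multiplicity-freeness plus the bijection between removable maximal caps and simple subquotients), which is what you do. One small remark on the parenthetical identification with maximal arrows: you correctly note that only caps $(i,j)$ with $j>i+1$ give rise to a solid arrow, but then conclude that this ``pairs maximal caps with maximal arrows,'' which isn't quite a bijection since a maximal cap of the form $(i,i+1)$ has no associated arrow at all (for instance, the cap $(2,3)$ in Example~\ref{ex:DS_computation} is maximal but carries no arrow, and indeed the example has four maximal caps but only three maximal arrows). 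This imprecision is inherited from the paper's own Corollary~\ref{cor:max_cap_arrow}, which also tacitly omits the $j=i+1$ case; the substantive count in Corollary~\ref{cor:comp_factors_DS} is the one in terms of maximal caps, and your argument for that part is exactly right.
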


\begin{corollary}
 Let $\lam \in \Lambda_n$. Then $DS(L_n(\lam))$ is simple iff there exists exactly one maximal solid arrow (one maximal cap) in $d_{\lam}$.
\end{corollary}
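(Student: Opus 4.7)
My plan is to deduce this statement directly from the immediately preceding Corollary~\ref{cor:comp_factors_DS}, which identifies the number of composition factors of $DS(L_n(\lam))$ with the number of maximal solid arrows (equivalently, maximal caps) in $d_\lam$. Since every object of $\F_{n-1}$ has finite length, the statement reduces to the observation that a finite-length module is simple if and only if it has length one, i.e.\ exactly one composition factor (counted with multiplicity).

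More precisely, for the forward direction I would argue as follows: assuming $DS(L_n(\lam))$ is simple, its Jordan--Hölder series has length one, and so by Corollary~\ref{cor:comp_factors_DS} there is exactly one maximal cap in $d_\lam$. For the reverse direction, suppose there is exactly one maximal cap in $d_\lam$. Then Corollary~\ref{cor:comp_factors_DS} gives a single simple composition factor of $DS(L_n(\lam))$, and Theorem~\ref{thrm:DS_clusters} guarantees that it appears with multiplicity precisely one. Hence $DS(L_n(\lam))$ has length one and is therefore simple.

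I do not foresee any real obstacle, but the one subtlety worth flagging is that the conclusion genuinely needs the multiplicity-freeness part of Theorem~\ref{thrm:DS_clusters}, not merely the count in Corollary~\ref{cor:comp_factors_DS}. Without multiplicity-freeness, ``one composition factor up to isomorphism'' would be weaker than ``length one''; the theorem rules this out by showing each $L_{n-1}(\mu)$ appears at most once. Given this, no further computation is required, and the proof is essentially a two-line application of the previously established results.
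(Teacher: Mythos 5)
Your proposal is correct and is essentially the argument the paper intends, since the authors present this corollary (together with Corollary~\ref{cor:comp_factors_DS}) as ``direct corollaries'' of Theorem~\ref{thrm:DS_clusters} without further proof. The reduction to ``length one iff simple,'' together with the remark that the multiplicity-one statement in Theorem~\ref{thrm:DS_clusters} is what makes the count in Corollary~\ref{cor:comp_factors_DS} equal to the Jordan--H\"older length, is exactly the right way to read it; one might add, for completeness, that $DS(L_n(\lam))$ is never zero here because a cap diagram with $n\geq 2$ black balls always has at least one maximal cap, so the degenerate case of an empty composition series does not arise.
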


\section{Computation of superdimensions}\label{sec:sdim}
In this section we compute the superdimension of the simple $\p(n)$-modules in $\F_n$. 

\subsection{Forests}\label{ssec:forest}
Let $\lam\in \Lambda_n$ be a dominant integral weight, and let $d_{\lam}$ be its weight diagram with caps. Let $(C(\lam), \lhd)$ be the poset of caps in $d_{\lam}$ with partial order $\lhd$ described in Definition \ref{def:caps_poset}.

We define an augmented poset $$(\widehat{C}(\lam),\,\lhd), \;\; \widehat{C}(\lam) = C(\lam) \sqcup \{c_{*}\}$$ where $c_{*}$ is a {\it ``virtual cap''} which is defined to be the greatest element in $\widehat{C}(\lam)$: namely, we have $$c_{*} \notin C(\lam), \; \text{ and } \;\forall c \in C(\lam), \, c \lhd c_*.$$ 

We define the successors of $c_*$ as in Definition \ref{def:caps_poset}. These are precisely the maximal caps in $C(\lam)$.

\begin{definition}\label{def:caps_prop}
 \mbox{}
 \begin{itemize}
  \item Given a cap $c \in \widehat{C}(\lam)$, let $$int(c) = \sharp \{c' \in \widehat{C}(\lam): c \rhd c'\}$$ be the number of caps internal to $c$, including $c$ itself. 
  
  If $c=(i, j)$ is a non-virtual cap, then $int(c)$ is the number of black balls in $d_{\lam}$ between positions $i$ and $j$ (including position $j$), and $int(c_*)=n+1$.
  
  \item A cap $c \in \widehat{C}(\lam)$ with $int(c) \equiv 0 \mod 2$ is called an {\it even} cap; otherwise it is called an {\it odd} cap.
  
  \item If every cap $c \in \widehat{C}(\lam)$ has at most one odd successor, we call such a weight $\lam$ {\it worthy}.  
 \end{itemize}

\end{definition}
\begin{example}\label{ex:worthy_wt1}

Consider the weight $\lambda = \eps_1 + \eps_2 + 3\eps_3+5\eps_4+5\eps_5+5\eps_6$ for $\p(6)$ \InnaC{as in Examples \ref{ex:wt1}, \ref{ex:wt1_caps}}. The cap diagram for $\lambda$ is:  

$$ \xymatrix@=20pt{  &{} &{} &{} &{} &{} &{} &{} &{} &{} &{} &{} &{} &{} &{} &{}\\  &\underset{-1}{} &\underset{0}{} 
&\underset{1}{} \ar@{-} `u/10pt[l] `/10pt[l] &\underset{2}{} \ar@{-} `u/20pt[l] `/20pt[lll] &\underset{3}{} &\underset{4}{} &\underset{5}{} \ar@{-} `u/10pt[l] `/10pt[l] &\underset{6}{} &\underset{7}{ } &\underset{8}{}  \ar@{-} `u/10pt[l] `/10pt[l]  &\underset{9}{} \ar@{-} `u/20pt[l] `/20pt[lll] &\underset{10}{} \ar@{-} `u/40pt[l] `/40pt[lllllll] }$$

Here $c_*$ has two successors: $(-1,2), (3,10)$ (both even caps), and we have: 
\begin{align*}
& int(c_*)=2, \; int(\InnaC{(0,1)})=int((4,5))=int((7,8))=1, \\ & int((\InnaC{-1,2}))=int((6,9))=2, \; int((3,10))=4.                                                              \end{align*}
 The odd caps here are $(0,1), (4,5), (7,8)$, and the rest are even. In this case, each cap in $\widehat{C}(\lam)$ has at most one odd successor, so the weight $\lam$ is worthy.
\end{example}

\begin{example}\label{ex:not_worthy_wt}

Consider the weight $\lambda = \eps_1+ 4\eps_2+6\eps_3+6\eps_4$ for $\p(4)$. The cap diagram for $\lambda$ is:  
$$ \xymatrix@=20pt{  &{} &{} &{} &{} &{} &{} &{} &{} &{} &{} &{} &{} &{} &{} &{}\\  &\underset{-1}{} &\underset{0}{} 
&\underset{1}{} \ar@{-} `u/10pt[l] `/10pt[l] &\underset{2}{}  &\underset{3}{} &\underset{4}{} &\underset{5}{} \ar@{-} `u/10pt[l] `/10pt[l] &\underset{6}{} &\underset{7}{ } &\underset{8}{} \ar@{-} `u/10pt[l] `/10pt[l]  &\underset{9}{} \ar@{-} `u/20pt[l] `/20pt[lll]  }$$

Here $$int((0,1))=int((4,5))=int((7,8))=1, \;int((6,9))=2.$$ The odd caps here are $(0,1), (4,5), (7,8)$, and the $(6,9)$ is an even cap. The maximal (non-virtual) caps in $C(\lam)$ are $(0,1), (4,5), (6,9)$. Hence the virtual cap has two odd successors, and the weight $\lam$ is not worthy.
\end{example}
\begin{example}\label{ex:worthy_wt2}

Consider the weight $$\lambda = -7\eps_1 -7\eps_2-7\eps_3-5\eps_4-3\eps_5-3\eps_6-\eps_7+\eps_8+\eps_9+\eps_{10}+\eps_{11}$$ for $\p(11)$. The cap diagram for $\lambda$ is:  

$$ \xymatrix@=5pt{  &{} &{} &{} &{} &{} &{} &{} &{} &{} &{} &{} &{} &{} &{} &{}\\   &\underset{-10}{}&\underset{-9}{}&\underset{-8}{}&\underset{-7}{} \ar@{-} `u/6pt[l] `/6pt[l] &\underset{-6}{} \ar@{-} `u/10pt[l] `/10pt[lll]
&\underset{-5}{} \ar@{-} `u/20pt[l] `/20pt[lllll] &\underset{-4}{}  &\underset{-3}{} 
&\underset{-2}{} \ar@{-} `u/6pt[l] `/6pt[l] &\underset{-1}{} &\underset{0}{} 
&\underset{1}{} \ar@{-} `u/6pt[l] `/6pt[l] &\underset{2}{} \ar@{-} `u/10pt[l] `/10pt[lll] &\underset{3}{} &\underset{4}{} &\underset{5}{} \ar@{-} `u/6pt[l] `/6pt[l] &\underset{6}{} &\underset{7}{ } &\underset{8}{} \ar@{-} `u/6pt[l] `/6pt[l]  &\underset{9}{}\ar@{-} `u/10pt[l] `/10pt[lll] &\underset{10}{} \ar@{-} `u/20pt[l] `/20pt[lllllll] &\underset{11}{} \ar@{-} `u/30pt[l] `/30pt[lllllllllllllll]  }$$

In this case, each cap in $\widehat{C}(\lam)$ has at most one odd successor, so the weight $\lam$ is worthy.
\end{example}
\begin{example}
The zero weight $\lam=0$ is always worthy (for any $n\geq 1$), since it gives a linear order on the augmented set of its caps $\widehat{C}(\lam)$. 
\end{example}
\begin{example}
 The weight $\lambda = -\eps_1$ is not worthy for any $n\geq 2$. For example, for $n=5$, the cap diagram of $\lam$ is 
 
 $$ \xymatrix@=20pt{  &{} &{} &{} &{} &{} &{} &{} &{} &{} &{} &{} &{} &{} &{} &{}\\  &\underset{-5}{} &\underset{-4}{} &\underset{-3}{} &\underset{-2}{} &\underset{-1}{} \ar@{-}  `u/10pt[l] `/10pt[l]
&\underset{0}{}  &\underset{1}{}  \ar@{-} `u/10pt[l] `/10pt[l] &\underset{2}{} \ar@{-} `u/25pt[l] `/25pt[lllll]  &\underset{3}{} \ar@{-} `u/35pt[l] `/35pt[lllllll]  &\underset{4}{} \ar@{-} `u/50pt[lllllllll] `/50pt[lllllllll] }$$ 
The cap $(-3, 2)$ has two odd successors, hence $\lambda$ is not worthy.
\end{example}

\begin{remark}
 The virtual cap $c_*$ is even iff $n \equiv 1 \mod 2$.
\end{remark}

The following lemma is straightforward:
\begin{lemma}\label{lem:odd_even_caps}
 Given any weight $\lam \in \Lambda_n$, any even cap has an odd number of odd successors, and any odd cap has an even number of odd successors. 
\end{lemma}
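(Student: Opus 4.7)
The plan is to reduce the statement to a single parity identity about the function $int$ on the augmented poset $\widehat{C}(\lam)$. The key observation I would record first is the recursion
\[
int(c) \;=\; 1 + \sum_{c' \text{ a successor of } c} int(c'),
\]
valid for every non-virtual cap $c \in C(\lam)$. This is immediate from the definition of successor: a cap $c'' \lhd c$ is either $c$ itself, or it is internal to exactly one successor of $c$ (namely, the minimal cap strictly between $c''$ and $c$ in the partial order $\lhd$). For the virtual cap $c_*$ one has the analogous identity $int(c_*) = n+1 = 1 + \sum_{c'} int(c')$, where the sum runs over the maximal (non-virtual) caps, using the fact that every non-virtual cap is internal to exactly one maximal cap (including itself).

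Once this recursion is in place, I would simply take the identity modulo $2$. Writing $c_1,\dots,c_k$ for the successors of $c$, we get
\[
int(c) - 1 \;\equiv\; \sum_{i=1}^k int(c_i) \pmod{2}.
\]
Now $int(c_i) \bmod 2$ equals $1$ precisely when $c_i$ is odd and $0$ precisely when $c_i$ is even, so the right-hand side is congruent modulo $2$ to the number of odd successors of $c$. Therefore: if $c$ is even (so $int(c)$ is even and $int(c)-1$ is odd), then $c$ has an odd number of odd successors; if $c$ is odd (so $int(c)-1$ is even), then $c$ has an even number of odd successors. The same argument applies verbatim to $c_*$, though it is not needed for the statement as written.

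I do not expect a real obstacle here: the whole content is the recursion for $int$, which is combinatorially transparent from the nesting structure of the cap diagram. The only minor care required is to remember that $c$ itself is counted in $int(c)$, producing the ``$1+$'' that flips parity between $int(c)$ and the parity count of its odd successors.
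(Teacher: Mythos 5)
Your proof is correct, and since the paper labels this lemma ``straightforward'' and supplies no argument, your recursion-based derivation is exactly the kind of verification the authors apparently left to the reader: the identity $int(c) = 1 + \sum_{c'} int(c')$ over successors $c'$ of $c$ (and likewise $int(c_*) = n+1 = 1 + \sum int(c')$ over maximal caps) follows from the non-crossing nesting of caps, and reducing modulo $2$ gives precisely the stated parity dichotomy. One small quibble: your parenthetical that the $c_*$ case ``is not needed for the statement as written'' is slightly off — $c_*$ is a cap in $\widehat{C}(\lam)$, and the lemma is invoked (via Corollary~\ref{cor:odd_even_caps} and in the sublemma inside the proof of Theorem~\ref{thrm:sdim}) precisely for the virtual cap — but since you explicitly verified that the argument applies verbatim to $c_*$, the proof is complete as written.
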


This immediately leads to the following conclusion:
\begin{corollary}\label{cor:odd_even_caps}
  Given a worthy weight $\lam \in \Lambda_n$, we have:
\begin{enumerate}
 \item Given any odd cap, all its successors are even caps.
 \item Given any even cap, it has exactly one odd successor.
\end{enumerate}
\end{corollary}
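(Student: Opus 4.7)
The plan is to combine Lemma \ref{lem:odd_even_caps} (the parity of the number of odd successors) with the hypothesis that $\lam$ is worthy (every cap has at most one odd successor) through a short arithmetic argument. No further ideas beyond these two ingredients should be required.

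First, I would invoke Lemma \ref{lem:odd_even_caps} applied to an arbitrary cap $c \in \widehat{C}(\lam)$: the number of odd successors of $c$ is even when $c$ is odd and odd when $c$ is even. The intuitive reason (to be made rigorous when proving Lemma \ref{lem:odd_even_caps} itself) is the recursive identity
$$int(c) \;=\; 1 \;+\; \sum_{c' \text{ a successor of } c} int(c'),$$
so the parity of $int(c)$ equals the parity of $1$ plus the number of odd successors of $c$.

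Next, I would combine this parity statement with the worthiness assumption, which says that the number of odd successors of any cap is at most $1$. For an odd cap $c$, this count is an even non-negative integer that is at most $1$, hence must be $0$; every successor of $c$ is therefore even, proving item (1). For an even cap $c$, this count is an odd non-negative integer that is at most $1$, hence must be exactly $1$, proving item (2).

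I do not anticipate any real obstacle: the combinatorial content is fully absorbed into Lemma \ref{lem:odd_even_caps}, and what remains here is a one-step parity-and-bound deduction. The only matter of care is to confirm that the statement is meant for all (non-virtual) caps in $C(\lam)$, in which case the argument above applies uniformly without special treatment of maximal caps (whose parent in $\widehat{C}(\lam)$ is $c_*$).
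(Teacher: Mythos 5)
Your argument is exactly the paper's: combine Lemma~\ref{lem:odd_even_caps} (parity of the number of odd successors) with the worthiness bound of at most one odd successor, and the two cases fall out by a one-line parity-versus-bound argument. The paper gives no further detail, prefacing the corollary with ``This immediately leads to the following conclusion,'' so your write-up (including the recursion $int(c)=1+\sum_{c'\text{ succ.}}int(c')$, which is the hidden content of Lemma~\ref{lem:odd_even_caps}) matches and slightly amplifies the intended proof.
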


\begin{definition}\label{def:forest_lambda}
 Let $\lam$ be a worthy weight. We construct a rooted forest ${F}_{\lam}$ as follows.

 \begin{itemize}

  \item The nodes of ${F}_{\lam}$ are pairs $(c_0, c_1)$, where $c_0, c_1 \in \widehat{C}(\lam)$, $int(c_0) \equiv 0 \mod 2$, $int(c_1) \equiv 1 \mod 2$, and $c_1$ is the unique odd successor of $c_0$.
 
  \item There is an edge from a node $v=(c_0, c_1)$ to a node $v'=(c'_0, c'_1)$ in ${F}_{\lam}$ if $c'_0$ is a successor of either $c_0$ or $c_1$.
  
 In that case, we consider the node $v$ a parent of the node $v'$ in our rooted forest.
 \end{itemize}
  The forest ${F}_{\lam}$ is called {\it the rooted forest corresponding to $\lam$}.
\end{definition}

\begin{example}
\mbox{}
 \begin{enumerate}
  \item For $\lam = 0$, $F_{\lam}$ is a linear rooted tree with $\lfloor \frac{n+1}{2} \rfloor$ nodes.
    \item For $\lam$ as in Example \ref{ex:worthy_wt1}, the rooted forest will be $$ \xymatrix{  &{\bullet}    &{}  &{\bullet}  \ar[d]  \\ &{}  &{}  &{\bullet}  }$$
  \item For $\lam$ as in Example \ref{ex:worthy_wt2}, the rooted forest will be $$ \xymatrix{ &{} &{\bullet} \ar[ld] \ar[rd]  &{} &{} \\ &{\bullet}    &{}  &{\bullet}  \ar[ld] \ar[rd] &{} \\ &{}  &{\bullet} &{} &{\bullet} \ar[d]\\ &{}  &{} &{} &{\bullet} }$$
 \end{enumerate}
\end{example}

We also recall the following definitions (cf. \cite{HeiWei14}):
\begin{definition}\label{def:forest factorial}
 Let $F$ be a rooted forest. 
 \begin{itemize}
  \item We denote by $\abs{F}$ the number of nodes in the forest.
  \item For any node $v$ in $F$, we denote by $F^{(v)}$ the rooted subtree of $F$ whose root is $v$.
    \item For any root $v$ in $F$ (that is, $v$ has no parent), we denote by $F\setminus\{v\}$ the rooted forest obtained from $F$ by removing $v$ and all the edges originating in it.
  \item We define the {\it forest factorial} $F!$ by $$F! = \prod_v \abs{F^{(v)}}$$
 \end{itemize}

\end{definition}

\begin{remark}
  Given a worthy weight $\lam \in \Lambda_n$, $\abs{F_{\lam}}=\lfloor {\frac{n+1}{2}}\rfloor$.
\end{remark}

\begin{example}
\mbox{}
 \begin{enumerate}
  \item For $\lam = 0$, we have: $$F_{\lam}! = \lfloor \frac{n+1}{2} \rfloor!$$ 
    \item For $\lam$ as in Example \ref{ex:worthy_wt1}, we have $$F_{\lam}!=1\cdot 2 \cdot 1 = 2, \;  \abs{F_{\lam}} = 3.$$
  \item For $\lam$ as in Example \ref{ex:worthy_wt2}, we have $$F_{\lam}!=6\cdot 1 \cdot 4 \cdot 1 \cdot 2 \cdot 1 = 48, \;  \abs{F_{\lam}} = 6. $$
 \end{enumerate}
\end{example}

The following statements will be useful for Theorem \ref{thrm:sdim}:
\begin{lemma}\label{lem:forest_heap}
  The integer $\frac{\abs{F}!}{F!}$ counts the number of heap-orderings on the rooted forest $F$. Here a {\it heap-ordering} on a rooted forest is a bijection $$\alpha:  \{\text{ nodes of } \, F_\lam \,\} \longrightarrow \{1,2,3,\ldots, \abs{F}\}$$ such that $\alpha(v) \leq \alpha(v')$ whenever $v$ is an ancestor of $v'$ (equivalently, on any subtree, the number corresponding to the root is less or equal to the numbers corresponding the rest of the nodes in that subtree).
\end{lemma}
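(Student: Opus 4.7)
The identity is a classical result often attributed to Knuth; I would prove it by induction on $\abs{F}$, using the decomposition of a forest into its root subtrees. Let $H(F)$ denote the number of heap-orderings of $F$; the goal is to show $H(F) = \abs{F}!/F!$.

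First I would reduce to the case of a single tree. Suppose $F$ consists of disjoint rooted trees $T_1, \ldots, T_k$. A heap-ordering on $F$ is equivalent to (i) a partition of the label set $\{1,\ldots,\abs{F}\}$ into subsets $S_1,\ldots,S_k$ with $\abs{S_i} = \abs{T_i}$, together with (ii) a heap-ordering on each $T_i$ using the labels in $S_i$ (shifted to $\{1,\ldots,\abs{T_i}\}$ in the obvious way, since only the relative order matters). This gives
\[
H(F) \;=\; \binom{\abs{F}}{\abs{T_1},\ldots,\abs{T_k}} \prod_{i=1}^k H(T_i).
\]
On the other hand, directly from Definition~\ref{def:forest factorial}, $F! = \prod_i T_i!$, so the desired identity for $F$ follows from the one for each $T_i$ by a multinomial-coefficient manipulation.

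Next, I would handle the case of a single rooted tree $T$ with root $r$ and immediate child-subtrees $T_1,\ldots,T_m$ (so $T \setminus \{r\}$ is the forest $T_1 \sqcup \cdots \sqcup T_m$). In any heap-ordering of $T$, the root $r$ must receive label $1$, since it is an ancestor of every other node. The remaining labels $\{2,\ldots,\abs{T}\}$ then give a heap-ordering of the forest $T \setminus \{r\}$. Therefore $H(T) = H(T \setminus \{r\})$, and by the previous paragraph applied to the smaller forest $T \setminus \{r\}$ (together with the inductive hypothesis on each $T_i$),
\[
H(T) \;=\; \binom{\abs{T}-1}{\abs{T_1},\ldots,\abs{T_m}} \prod_{i=1}^m \frac{\abs{T_i}!}{T_i!} \;=\; \frac{(\abs{T}-1)!}{\prod_i T_i!}.
\]
Since $T! = \abs{T} \cdot \prod_i T_i!$ (the root contributes the factor $\abs{T^{(r)}} = \abs{T}$), this equals $\abs{T}!/T!$, completing the induction.

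The argument has no real obstacle; the only thing to be careful about is the base case (the empty forest or a single-node tree, for which both sides equal $1$) and the bookkeeping in the multinomial identity. Note also that the lemma does not actually depend on $F$ arising from a weight $\lam$; it is a purely combinatorial statement about rooted forests, which is why Definition~\ref{def:forest factorial} is stated for arbitrary rooted forests.
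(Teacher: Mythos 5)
Your proof is correct and takes essentially the same route as the paper's: decompose the forest into its root subtrees, use a multinomial coefficient to distribute the labels among them, observe that within each subtree the smallest label must go to the root, and recurse. The only difference is organizational — you split the argument into a forest-to-trees step and a tree-to-forest step, whereas the paper removes all the roots in a single pass — but the underlying combinatorics and the induction are identical.
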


\begin{proof}
 We prove the statement by (complete) induction on $\abs{F}$. 
 
 {\bf Base}: if $\abs{F}=0$ then the statement is clearly true.
 
 {\bf Step}: let $F$ be a rooted forest with at least $1$ node, and assume the statement holds for any rooted forest with fewer nodes.
 
 Let $v_1, \ldots, v_m$ be the roots of $F$, and let $T_i := F^{(v_i)}$ be the subtree whose root is $v_i$. Then 
\begin{align*}
\frac{\abs{F}!}{F!} &= \frac{\abs{F}!}{\prod_{i=1}^m \abs{T_i}} \cdot \frac{\prod_{i=1}^m \abs{T_i}}{F!} =\binom{\abs{F}!}{\abs{T_1}, \abs{T_2} ,\ldots, \abs{T_m} } \cdot \prod_{i=1}^m \frac{\abs{T_i}!}{T_i!} =\\&= \binom{\abs{F}!}{\abs{T_1}, \abs{T_2} ,\ldots, \abs{T_m} } \cdot \prod_{i=1}^m \frac{\abs{T_i\setminus \{v_i\}}}{\left(T_i\setminus \{v_i\}\right)!}
\end{align*}
The multinomial coefficient $\binom{\abs{F}!}{\abs{T_1}, \abs{T_2} ,\ldots, \abs{T_m} }$ counts the number of ways to partition the set $\{1,2,3,\ldots, \abs{F}\}$ into an ordered multiset of unordered subsets, whose sizes are $\abs{T_1}, \abs{T_2} ,\ldots, \abs{T_m}$. Each such subset will be the set of numbers corresponding to the rooted tree $T_i$, with the smallest number corresponding to the root $v_i$ of $T_i$. 

By the induction assumption, for each $i$ we have: the value $\frac{\abs{T_i\setminus \{v_i\}}}{\left(T_i\setminus \{v_i\}\right)!}$ counts the number of heap-orderings on the rooted forest $T_i\setminus \{v_i\}$, which implies the statement of the lemma. 
\end{proof}
From Lemma \ref{lem:forest_heap} we immediately obtain:
\begin{corollary}\label{cor:forest_binom}
 Given a rooted tree $F$, we have the following identity: $$\frac{\abs{F}!}{F!} = \sum_{v \text{ a root of } F} \frac{\abs{F\setminus\{v\}}!}{\left( F\setminus\{v\}\right)!}$$
\end{corollary}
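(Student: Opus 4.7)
My plan is to deduce the identity from the combinatorial interpretation established in Lemma \ref{lem:forest_heap}: both sides count heap-orderings of $F$, partitioned according to which root receives the label $1$.

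First, I observe that in any heap-ordering $\alpha : \{\text{nodes of } F\} \to \{1, \ldots, |F|\}$, the node with $\alpha(v) = 1$ must be a root of $F$. Indeed, if $v$ had a parent $u$, then $u$ would be an ancestor of $v$, forcing $\alpha(u) \leq \alpha(v) = 1$, which would give $\alpha(u) = 1 = \alpha(v)$, contradicting injectivity. Hence the set of heap-orderings on $F$ decomposes as a disjoint union, over the roots $v$ of $F$, of the set of heap-orderings $\alpha$ with $\alpha(v) = 1$.

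Next, for each root $v$ of $F$, I would construct a bijection
\[
\Phi_v : \{\text{heap-orderings } \alpha \text{ of } F \text{ with } \alpha(v)=1\} \;\longleftrightarrow\; \{\text{heap-orderings of } F \setminus \{v\}\}
\]
defined by $\Phi_v(\alpha)(w) = \alpha(w) - 1$ for every node $w$ of $F \setminus \{v\}$. This is well-defined because $\alpha$ restricted to $F \setminus \{v\}$ takes values in $\{2, \ldots, |F|\}$ and respects the ancestor relation (the ancestor relation on $F \setminus \{v\}$ is the restriction of that on $F$, since removing a root does not create any new ancestor/descendant pairs among the remaining nodes). The inverse sends a heap-ordering $\beta$ of $F \setminus \{v\}$ to the ordering on $F$ that assigns $1$ to $v$ and $\beta(w)+1$ to each other $w$; the heap property at $v$ is automatic because $1$ is the minimum.

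Combining these two observations with Lemma \ref{lem:forest_heap} (applied to $F$ on the left and to each $F \setminus \{v\}$ on the right) gives
\[
\frac{|F|!}{F!} \;=\; \sum_{v \text{ a root of } F} \#\{\text{heap-orderings of } F \setminus \{v\}\} \;=\; \sum_{v \text{ a root of } F} \frac{|F \setminus \{v\}|!}{(F \setminus \{v\})!},
\]
which is the desired identity. There is no real obstacle here: the only small point requiring care is verifying that $\Phi_v$ and its inverse preserve the heap condition, which follows immediately from the fact that the ancestor relation on $F \setminus \{v\}$ coincides with the restricted ancestor relation from $F$ once a root is removed.
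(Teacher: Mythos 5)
Your proof is correct and makes explicit the same deduction the paper treats as immediate from Lemma \ref{lem:forest_heap}: both sides of the identity count heap-orderings of $F$, partitioned according to which root receives label $1$, via exactly the label-shifting bijection you describe. The paper records the corollary without a separate proof, so your argument simply spells out that omitted routine step.
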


\subsection{Computation of superdimensions}\label{ssec:sdim_computation}

\begin{theorem}\label{thrm:sdim}
 Let $\lam \in \Lambda_n$ and let $L_n(\lam)$ be the corresponding simple module in $\F_n$ \InnaC{(with an even highest weight vector, as before)}. 
 
 Consider the cap diagram $d_{\lam}$, as described in Section \ref{sssec:cap_diag}.
 
 If the weight $\lam$ is not worthy (see Definition \ref{def:caps_prop}), then $$\sdim L_n(\lam)=0.$$
 
 If the weight $\lam$ is worthy, let $F_{\lam}$ be the corresponding rooted forest (as in Definition \ref{def:forest_lambda} above). Then $$\sdim L_n(\lam) = \frac{\abs{F_{\lam}}!}{F_{\lam}!}.$$
\end{theorem}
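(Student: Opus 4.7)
The plan is to induct on $n$, using the identity $\sdim L_n(\lam) = \sdim DS_x(L_n(\lam))$ (a general feature of the Duflo--Serganova functor: the complement of $DS_x(M)$ inside $M$ splits into free $\C[x]/(x^2)$-submodules, each of vanishing superdimension). Combined with Corollary~\ref{cor:DS_caps}, this yields the recursion
\[
\sdim L_n(\lam) \;=\; \sum_{c} (-1)^{z(\lam,\mu_c)}\,\sdim L_{n-1}(\mu_c),
\]
where $c$ ranges over the maximal caps of $d_\lam$ and $\mu_c$ is obtained by removing $c$. The base case $n=1$ is immediate: $\sdim L_1(\lam)=1$ and $F_\lam$ is a single node. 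Writing $t(\lam)$ for the value claimed by the theorem, it suffices by induction to verify the purely combinatorial identity $t(\lam) = \sum_c (-1)^{z(\lam,\mu_c)}\,t(\mu_c)$.

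For $\lam$ worthy I would split on the parity of $n$. When $n$ is even, $c_*^\lam$ is odd, and worthiness together with Lemma~\ref{lem:odd_even_caps} forces every maximal cap of $\lam$ to be even; the roots of $F_\lam$ are precisely the nodes $(c,c_1)$ as $c$ ranges over maximal caps. Removing such a $c$ produces a worthy $\mu_c$ whose $c_*^{\mu_c}$ is now even with unique odd successor $c_1$; the forest $F_{\mu_c}$ is obtained by deleting the root $(c,c_1)$ from $F_\lam$ and reattaching all remaining roots as children of a new root $(c_*^{\mu_c},c_1)$. A direct bookkeeping gives
\[
\frac{|F_{\mu_c}|!}{F_{\mu_c}!} \;=\; \frac{(|F_\lam|-1)!\,\bigl|F_\lam^{((c,c_1))}\bigr|}{F_\lam!},
\]
which is the $v=(c,c_1)$-summand on the right-hand side of Corollary~\ref{cor:forest_binom}. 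The sign is $+1$, because all maximal caps have even $int$ so $z(\lam,\mu_c)$ is even. Summing over all maximal caps (equivalently, over all roots of $F_\lam$) yields $t(\lam)$ by Corollary~\ref{cor:forest_binom}. When $n$ is odd, worthiness forces a unique odd maximal cap $c_{mo}$ and a unique root $(c_*^\lam,c_{mo})$ of the tree $F_\lam$; removing $c_{mo}$ (with sign $+1$) produces a worthy $\mu$ with $F_\mu=F_\lam\setminus\{\text{root}\}$, while removing any even maximal cap gives a non-worthy $\mu$ (since $c_*^{\mu}$ would inherit both $c_{mo}$ and the odd successor of the removed cap), and the identity collapses to the single-root instance of Corollary~\ref{cor:forest_binom}.

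For $\lam$ not worthy two situations arise. If worthiness fails at some non-virtual cap $C$, then every $\mu_c$ remains non-worthy: either $c\ne C$, in which case the successor structure of $C$ is preserved, or $c=C$ (necessarily a maximal cap), in which case the $\ge 2$ odd successors of $C$ become odd successors of $c_*^{\mu_c}$. Hence the right-hand side vanishes term-by-term. If worthiness fails only at $c_*^\lam$ with $m\ge 2$ odd maximal caps, then for $m\ge 3$ the same count shows every $\mu_c$ is non-worthy at $c_*^{\mu_c}$. The delicate case is $m=2$ with odd maximal caps $c_o^{(1)},c_o^{(2)}$: removing an even maximal cap leaves three odd successors of $c_*^{\mu}$ (the two originals plus the odd successor of the removed cap), while removing either $c_o^{(i)}$ yields a worthy $\mu_{c_o^{(i)}}$. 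The two forests $F_{\mu_{c_o^{(1)}}}$ and $F_{\mu_{c_o^{(2)}}}$ share the same tree structure under the symmetry swapping the two odd maximal caps and their (necessarily even) successors, so their forest factorial expressions coincide, while
\[
z(\lam,\mu_{c_o^{(1)}}) - z(\lam,\mu_{c_o^{(2)}}) \;\equiv\; int\bigl(c_o^{(2)}\bigr) \;\equiv\; 1 \pmod{2},
\]
because every maximal cap lying between $c_o^{(1)}$ and $c_o^{(2)}$ is even (by the hypothesis that worthiness fails only at $c_*^\lam$). The two nonzero terms therefore cancel exactly.

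The main obstacle I anticipate is the bookkeeping in the worthy case with $n$ even, where the forest undergoes a genuine re-rooting rather than a mere truncation, and one has to verify carefully that the product of subtree sizes lines up with the summand appearing in Corollary~\ref{cor:forest_binom}. The cancellation argument for $m=2$ is also delicate: it requires both the combinatorial symmetry of the two forests obtained by interchanging the odd maximal caps, and the precise parity count of the number of black balls between them.
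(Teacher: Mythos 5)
Your proposal is correct, but it takes a genuinely different route from the paper. The paper's induction step is hybrid: for $n$ odd it applies a single $DS$, while for $n$ even it applies the composite $\overline{DS} = DS_{x_{n-1}}\circ DS_{x_n}$, removing \emph{two} caps at once --- an even maximal cap $c_1$ together with a successor $c_2$. The advantage is that for a worthy $\lam$ removing such a pair simply deletes one node from $F_\lam$, and the spurious terms (where both removed caps are maximal) cancel in ordered pairs via $\tilde z(\lam,(c_1,c_2))\not\equiv\tilde z(\lam,(c_2,c_1))$. Your version inducts with a single $DS$ step for all $n$, at the cost of two extra pieces of work: (i) for $n$ even and $\lam$ worthy, removing an even maximal cap re-roots $F_\lam$ rather than truncating it, so you have to verify by direct bookkeeping that the new ratio equals the summand $\frac{(|F_\lam|-1)!\,|F_\lam^{(v)}|}{F_\lam!}$ in Corollary~\ref{cor:forest_binom}; and (ii) for $n$ even and $\lam$ non-worthy with failure only at $c_*$ and $m=2$ odd maximal caps, you have to produce an \emph{ad hoc} cancellation, arguing that the two surviving (worthy) terms yield isomorphic rooted forests and opposite signs via a parity count $int(c_o^{(2)})\equiv 1$. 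Both of these checks go through, and I have verified them: the re-rooting identity matches $\left(\frac{|F\setminus\{v\}|!}{(F\setminus\{v\})!}\right)$, and for (ii) the two forests have the same root-children set (even maximal caps other than the removed one, even successors of $c_o^{(1)}$, even successors of $c_o^{(2)}$) with identical subtrees below. One tiny slip: in your non-worthy case with failure only at $c_*$, since $c_*$ is odd for $n$ even, Lemma~\ref{lem:odd_even_caps} forces the number $m$ of odd maximal caps to be even, so the dichotomy is $m=2$ versus $m\ge 4$, not ``$m\ge 3$''; this does not affect your argument. Overall your route avoids the composite functor but pushes the extra combinatorics into the forest-bookkeeping and a bespoke cancellation, whereas the paper's double-$DS$ step trades those for a cleaner pairing cancellation and a pure node-deletion on the forest.
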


\begin{example}
\mbox{}
 \begin{enumerate}
  \item For $\lam = 0$ and any $n\geq 1$, we have: $\sdim L_n(0)= \sdim \triv = \frac{\abs{F_{\lam}}!}{F_{\lam}!}=1$.
   \item For $\lam = -\eps_1$ and $n\geq 2$, we have: $\sdim L_n(-\eps_1)= - \sdim V_n = 0$.
    \item For $\lam$ as in Example \ref{ex:worthy_wt1}, we have: $\sdim L_6(\lam)=\frac{\abs{F_{\lam}}!}{F_{\lam}!}=3$.
    \item For $\lam$ as in Example \ref{ex:not_worthy_wt}, we have: $\sdim L_4(\lam)=0$.
  \item For $\lam$ as in Example \ref{ex:worthy_wt2}, we have: $\sdim L_{11}(\lam)=\frac{\abs{F_{\lam}}!}{F_{\lam}!}=15$.
 \end{enumerate}
\end{example}

\begin{proof}[Proof of Theorem \ref{thrm:sdim}]
We prove the required statement by induction on $n \geq 1$, done separately for odd and even $n$. 

{\bf Base}: 
For $n=1$, any (dominant) integral $\p(1)$-weight $\lam\in \Lam_1$ has a cap diagram with a single cap. So it is worthy, and its rooted forest (tree) $F_\lam$ consists of just one node. The simple $\p(1)$-module $L_1(\lam)$ has superdimension $1$. Hence $$\frac{\abs{F_{\lam}}!}{F_{\lam}!} = 1 = \sdim L_1(\lam)$$ as required.

For $n=2$, we have two types of (dominant) integral $\p(2)$-weights $\lam\in \Lam_1$: 
\begin{enumerate}
 \item If $\lam_1=\lam_2$, then the cap diagram has exactly two caps, one internal to the other:
  $$ \xymatrix@=20pt{  &{} &{} &{} &{} &{} &{} &{} &{} &{} &{} &{} &{} &{} &{} &{}\\   &\underset{\lam_1-2}{} &\underset{\lam_1-1}{}  &\underset{\lam_1}{} \ar@{-} `u/10pt[l] `/10pt[l]
&\underset{\lam_1+1}{} \ar@{-} `u/20pt[lll] `/20pt[lll] }$$
 So $\lam$ is worthy. Its rooted forest (tree) $F_\lam$ consists of just one node. The simple $\p(2)$-module $L_2(\lam)$ is a tensor power of the determinant representation of $\p(2)_0 = \gl_2$, and has superdimension $1$. Hence $$\frac{\abs{F_{\lam}}!}{F_{\lam}!} = 1 = \sdim L_2(\lam)$$ as required.
\item If $\lam_1\neq\lam_2$, then the cap diagram has exactly two disjoint caps:
  $$ \xymatrix@=20pt{  &{} &{} &{} &{} &{} &{} &{} &{} &{} &{} &{} &{} &{} &{} &{}\\    &\underset{\lam_1-1}{}  &\underset{\lam_1}{} \ar@{-} `u/10pt[l] `/10pt[l]
 &{} &{\ldots} &{} &\underset{\lam_2}{}  &\underset{\lam_2+1}{} \ar@{-} `u/10pt[l] `/10pt[l] }$$
 The virtual cap in this case has two odd successors, hence $\lam$ is not worthy. The simple $\p(2)$-module $L_2(\lam)$ is typical and has superdimension $0$, as required.
\end{enumerate}

{\bf Step}: Assume the statement of the theorem holds for $n-2, n-1$. We now prove it for $n$.

Recall that the Duflo-Serganova functor $DS_x$ (for any $x\in \p(n)_{\bar{1}}$ is a symmetric monoidal functor, so it preserves categorical dimensions (in other words, superdimensions).

For each $k=n-1, n$, let $x_k\in \p(k)_1, x_k\neq 0$ be the odd element corresponding to the root $2\eps_k$. Let $DS_{x_{n-1}}$, $DS_{x_n}$ be the corresponding Duflo-Serganova functors.

First we consider the case when $n \equiv 1 \mod 2$. 

Let $\lam \in \Lambda_n$. Then 
\begin{equation}\label{eq:sdim_aux1}
\sdim L_n(\lam) = \sdim DS_{x_n}(L_n(\lam)) = \sum_{c \in C(\lam) \,\text{ maximal}} (-1)^{z(\lam, c)} \sdim L_{n-1}(\mu_c) 
\end{equation}

Here for each maximal (non-virtual) cap $c$ in $C(\lam)$, we denote by $\mu_c$ the weight in $\Lambda_{n-1}$ such that $d_{\mu_c}$ is obtained from $d_\lam$ by removing the cap $c$ (see Corollary \ref{cor:DS_caps}), and $z(\lam, c) = z$ is the parity of the composition factor $L_{n-1}(\mu_c)$ in $DS_{x_n}(L_n(\lam))$.

Consider a maximal cap $c \in C(\lam)$ as above, and let $\mu := \mu_c$. Then $\widehat{C}(\mu) = \widehat{C}(\lam) \setminus \{c\}$ with induced partial order.

We then have the following sublemma:
\begin{sublemma}\label{sublem:1}
Assume $n \equiv 1 \mod 2$. Then we have:

\begin{itemize}
 \item If $\lam$ was not worthy, then so is $\mu$.
 
 \item If $\lam$ was worthy, and $c$ was even, then $\mu$ will not be worthy. 
\item If $\lam$ was worthy, and $c$ was odd, then $\mu$ will be worthy. 
\end{itemize}
\end{sublemma}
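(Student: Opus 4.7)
My approach rests on a simple bookkeeping observation: removing a maximal cap $c$ alters only one successor set in the poset of caps. Concretely, $\widehat C(\mu) = \widehat C(\lam)\setminus\{c\}$; the $\mathrm{int}$ value of every non-virtual cap is preserved (because $c$ is maximal, it is not internal to any other non-virtual cap); and the direct successors of $c_*$ in $\mu$ are obtained from the direct successors of $c_*$ in $\lam$ by deleting $c$ and adjoining all direct successors of $c$. Thus the even/odd label is preserved for every non-virtual cap, while $\mathrm{int}(c_*)$ drops from $n+1$ to $n$ and so the label of $c_*$ itself flips.

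The key numerical ingredient is Lemma \ref{lem:odd_even_caps}: an even cap has an odd number of odd successors, and an odd cap has an even number. Applied to $c_*$ in $\lam$, which is even since $n$ is odd, this shows that the number $|O|$ of odd successors of $c_*$ in $\lam$ is odd. In particular, if $\lam$ is worthy then $|O|=1$, whereas if $c_*$ itself is a cap that prevents $\lam$ from being worthy, then $|O|\ge 3$. This parity gap of $2$ is what rules out an apparent edge case in the first bullet below.

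For the first bullet, I would pick a cap $d\in \widehat C(\lam)$ with at least two odd successors and split on $d\notin\{c,c_*\}$, $d=c$, or $d=c_*$. In the first case $d$ and its successor set transfer to $\mu$ unchanged. In the second case the odd successors of $c$ migrate into the successor set of $c_*$ in $\mu$, so $c_*$ becomes bad in $\mu$. In the third case, $|O|\ge 3$ combined with Lemma \ref{lem:odd_even_caps} applied to $c$ handles both parities of $c$: if $c$ is even then $c\notin O$ and one adjoins an odd, hence $\ge 1$, number of odd successors of $c$, keeping the count $\ge 4$; if $c$ is odd then deleting $c$ from $O$ still leaves $|O|-1\ge 2$, and the contribution from the successors of $c$ is an even, possibly zero, number, so the count remains $\ge 2$.

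For the second bullet, worthiness of $\lam$ combined with Lemma \ref{lem:odd_even_caps} forces $|O|=1$ in $\lam$ and forces the even cap $c$ to have exactly one odd successor; since $c$ is even we have $c\notin O$, so both contributions survive and $c_*$ in $\mu$ has exactly two distinct odd successors (distinct because the one from $O$ is not inside $c$, while the one coming from $c$ is). For the third bullet, worthiness again gives $|O|=1$, and since the odd cap $c$ is a direct successor of $c_*$ it must be the unique element of $O$; worthiness then forces the odd cap $c$ to have an even number, at most $1$, hence exactly $0$, of odd successors. Therefore the odd successors of $c_*$ in $\mu$ form $(O\setminus\{c\})\cup\{\text{odd successors of }c\}=\varnothing$, while every other cap's successor data is unchanged, so $\mu$ is worthy. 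I expect the main obstacle to be the $d=c_*$ subcase of the first bullet, where one must remember that $|O|$ is odd in order to rule out the otherwise problematic case $|O|=2$, which the parity constraint forbids.
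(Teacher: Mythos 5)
Your proof is correct and follows essentially the same route as the paper: a case split on which cap $d$ witnesses non-worthiness together with the parity observation that $c_*$, being even when $n$ is odd, has an odd number of odd successors (Lemma~\ref{lem:odd_even_caps}), which excludes the borderline count $|O|=2$ in the $d=c_*$ subcase. The paper handles the third bullet marginally more compactly by citing Corollary~\ref{cor:odd_even_caps}, but the underlying bookkeeping of how the successors of the removed maximal cap $c$ are transferred to $c_*$ is the same as yours.
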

\begin{proof}[Proof of Sublemma]

\begin{itemize}
 \item Assume $\lam$ was not worthy.
 
 Let $c' \in \widehat{C}(\lam)$ be a cap with at least $2$ odd successors. Then we have three cases: 
 \begin{enumerate}
  \item Case $c'=c$. In this case $c_{*} \in \widehat{C}(\mu)$ will have at least $2$ odd successors.
    \item Case $c' = c_{*}$. Recall that since $n\equiv 1 \mod 2$, the virtual cap $c_* \in \widehat{C}(\lam)$ is even, hence it has an odd number of odd successors, by Lemma \ref{lem:odd_even_caps}. Thus it has at least $3$ odd successors in $\widehat{C}(\lam)$, and $c_*\in \widehat{C}(\mu)$ will still have at least $2$ odd successors in $\widehat{C}(\mu)$. 
  \item Case $c' \neq c, c_{*}$. In this case $c'\in \widehat{C}(\mu)$ will have at least $2$ odd successors. 
 \end{enumerate}
 
 In all these cases $\mu$ is not worthy.
 
 \item Assume $\lam$ was worthy, and $c$ was even. 
 
 Since $n\equiv 1 \mod 2$, the virtual cap $c_* \in \widehat{C}(\lam)$ is even. So $c_*$ has one odd successor in $\widehat{C}(\lam)$ which is not $c$, and will gain one more odd successor (a former successor of $c$) after $c$ is removed. Thus $c_*\in \widehat{C}(\mu)$ will still have at least $2$ odd successors, and $\mu$ is not worthy. 
 
 \item Assume $\lam$ was worthy, and $c$ was odd. Then by Corollary \ref{cor:odd_even_caps} the number of odd successors of any given cap has not grown, and hence $\mu$ is worthy.
\end{itemize}
 The sublemma is proved.
\end{proof}

Thus in case $n \equiv 1 \mod 2$, we have: if $\lam$ is not worthy then $\sdim L_n(\lam) = 0$; if $\lam$ is worthy then $$\sdim DS_{x_n}(L_n(\lam)) = (-1)^{z(\lam, c)} \sdim L_{n-1}(\mu)$$ where $\mu \in \Lambda_{n-1}$ is the weight whose cap diagram $d_{\mu}$ is obtained by removing the unique (non-virtual) {\it odd} maximal cap $c$ in $d_{\lam}$. 

This implies that the rooted forest $F_{\mu}$ is obtained from the rooted tree $F_{\lam}$ by removing its root, hence $$\frac{\abs{F_{\mu}}!}{F_{\mu}!} = \frac{\abs{F_{\lam}}!}{F_{\lam}!}.$$

The parity $z(\lam, c)$ appearing in Corollary \ref{cor:DS_caps} is $0$: indeed, since $c$ was the only odd cap in $d_{\lam}$, there is an even number of caps whose right end is to the right of $c$, hence $z(\lam, c)=0$ by Remark \ref{rmk:z_interpretation}.

Applying the induction assumption to $L_{n-1}(\mu)$, we obtain: $$\sdim L_n(\lam) = \sdim DS_{x_n}(L_n(\lam)) = \sdim L_{n-1}(\mu)=\frac{\abs{F_{\mu}}!}{F_{\mu}!} = \frac{\abs{F_{\lam}}!}{F_{\lam}!}$$ as required. This completes the proof of the theorem in case $n$ is odd.

\mbox{}

We now consider the case when $n$ is even.

Again, let $\lam \in \Lambda_n$.

We consider the functor 
$$\overline{DS}: \F_n \to \F_{n-2}, \;\; \overline{DS} := DS_{x_{n-1}} \circ DS_{x_n}$$

Then $\overline{DS}$ is a symmetric monoidal functor preserving superdimensions. 

Computing the action of $\overline{DS}$ on $L_n(\lam)$ explicitly, we have: 
\begin{equation}\label{eq:sdim_aux2}
\sdim L_n(\lam) = \sdim \overline{DS}(L_n(\lam)) = \sum_{\underline{c}=(c_1, c_2), \; c_1, c_2 \in C(\lam)} (-1)^{\tilde{z}(\lam, \underline{c})} \sdim L_{n-2}(\mu_{\underline{c}}) 
\end{equation}

Here the sum goes over all ordered pairs of caps $\underline{c}=(c_1, c_2)$ where $c_1$ is a maximal (non-virtual) cap in $C(\lam)$, while $c_2 \in C(\lam)$ is a successor of either $c_*$ or $c_1$. The weight $\mu_{\underline{c}} \in \Lambda_{n-2}$ is such that $d_{\mu_{\underline{c}}}$ is obtained from $d_\lam$ by removing $c_1$ and then $c_2$. The parity $\tilde{z}(\lam, \underline{c})$ is computed using Corollary \ref{cor:DS_caps}:
$$ \tilde{z}(\lam, \underline{c}) = z(\lam, c_1) + z(\lam_{c_1}, c_2)$$
where the notation is as in \eqref{eq:sdim_aux1}.

Let $\underline{c}=(c_1, c_2)$ be a pair of caps as above, and let $\mu :=\mu_{\underline{c}}$. Then $\widehat{C}(\mu) = \widehat{C}(\lam) \setminus \{c_1, c_2\}$ with the induced partial order. 

We begin our study of the sum \eqref{eq:sdim_aux2} above with the following observation:

Assume both $c_1, c_2$ are both successors of $c_*$. Then both $(c_1, c_2)$ and $(c_2, c_1)$ are ordered pairs appearing as indices in the sum \eqref{eq:sdim_aux2}, and $\mu_{(c_1, c_2)}=\mu_{(c_2, c_1)}$. By Remark \ref{rmk:z_interpretation}, we have: $$\tilde{z}(\lam, (c_1, c_2)) \equiv \tilde{z}(\lam, (c_2, c_1)) +1 \mod 2.$$

Hence the corresponding terms in the sum \eqref{eq:sdim_aux2} cancel out, and from now on we will consider the sum \eqref{eq:sdim_aux2} so that the sum goes over the ordered pairs $(c_1, c_2)$ where $c_2$ is a successor of $c_1$.

Let us consider the case when $\lam$ is not worthy.  

Let $c' \in \widehat{C}(\lam)$ be a cap (perhaps virtual) with at least $2$ odd successors. 

\InnaC{
\begin{sublemma}
The weight $\mu = \mu_{\underline{c}} \in \Lambda_{n-2}$ is not worthy as well. 
\end{sublemma}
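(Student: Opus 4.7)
The plan is to show that removing the caps $c_1$ (maximal) and $c_2$ (a successor of $c_1$) from $\widehat{C}(\lam)$ preserves non-worthyness. First I would prove a structural claim: for every $c' \in \widehat{C}(\mu)$ with $c' \neq c_*$, the parity of $int(c')$ is unchanged and the set of successors of $c'$ in $\widehat{C}(\mu)$ equals the set in $\widehat{C}(\lam)$. This follows from the tree structure of caps under inclusion, together with the assumptions on $c_1, c_2$: since $c_1$ is maximal it is internal only to $c_*$, and since $c_2$ is a successor of $c_1$, any cap $c''$ with $c_2 \lhd c''$ must be comparable to $c_1$, forcing $c'' \in \{c_*, c_1\}$ (otherwise $c'' \lhd c_1$ would give an intermediate cap between $c_2$ and $c_1$, contradicting successor-ness). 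Hence the only cap whose successor set changes is $c_*$, and I would record the formula
\begin{equation*}
S^\mu_{c_*} = \bigl(S^\lam_{c_*} \setminus \{c_1\}\bigr) \cup \bigl(S^\lam_{c_1} \setminus \{c_2\}\bigr) \cup S^\lam_{c_2},
\end{equation*}
where $S^\bullet_\bullet$ denotes the successor set in the corresponding poset.

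Next I would take a witness $c' \in \widehat{C}(\lam)$ with at least two odd successors and split into cases. If $c' \notin \{c_*, c_1, c_2\}$, the witness transfers directly to $\widehat{C}(\mu)$. If $c' = c_2$, the $\geq 2$ odd successors of $c_2$ all survive and become successors of $c_*$, which is the new witness. The remaining cases $c' = c_1$ and $c' = c_*$ require parity tracking via Lemma \ref{lem:odd_even_caps} (an even cap has an odd number of odd successors; an odd cap has an even number). The key auxiliary observation is: since $n$ is even, $c_*$ is odd, so $c_*$ has an even number of odd successors in $\widehat{C}(\lam)$; in particular, if $c_1$ is odd (and hence is itself an odd successor of $c_*$), then $c_*$ has at least one further odd successor in $\widehat{C}(\lam)$, which remains a successor of $c_*$ in $\widehat{C}(\mu)$.

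With this, case $c' = c_1$ splits into four subcases by the parities of $c_1$ and $c_2$: whenever $c_1$ is even, Lemma \ref{lem:odd_even_caps} forces $c_1$ to have $\geq 3$ odd successors (since we need $\geq 2$ and the count must be odd), so $S^\lam_{c_1} \setminus \{c_2\}$ contributes $\geq 2$ odd caps to $S^\mu_{c_*}$; whenever $c_1$ is odd, $c_1$ has $\geq 2$ odd successors and $S^\lam_{c_1} \setminus \{c_2\}$ contributes $\geq 1$ odd cap, which combines with the surviving odd successor of $c_*$ guaranteed by the auxiliary observation. Case $c' = c_*$ is handled analogously, starting directly from $\geq 2$ odd successors of $c_*$ in $\widehat{C}(\lam)$ and tracking which ones survive, disappear (only $c_1$ can), or are replaced by elements of $S^\lam_{c_1} \setminus \{c_2\}$ and $S^\lam_{c_2}$ after removal.

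The main obstacle will be the bookkeeping in the subcase $c' = c_1$ with both $c_1$ and $c_2$ odd, where the two odd successors of $c_*$ witnessing non-worthyness of $\mu$ must be assembled from two different sources — one from $S^\lam_{c_1} \setminus \{c_2\}$ and one from $S^\lam_{c_*} \setminus \{c_1\}$ — relying crucially on the parity constraint that $c_*$ (being odd) has an even number of odd successors in $\widehat{C}(\lam)$.
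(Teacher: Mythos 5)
Your proposal is correct, and it reaches the same conclusion using the same two essential ingredients as the paper — the formula
\begin{equation*}
S^\mu_{c_*} = \bigl(S^\lam_{c_*} \setminus \{c_1\}\bigr) \cup \bigl(S^\lam_{c_1} \setminus \{c_2\}\bigr) \cup S^\lam_{c_2}
\end{equation*}
for how $c_*$ inherits successors, and Lemma~\ref{lem:odd_even_caps} together with the observation that $c_*$ is odd in both $\widehat{C}(\lam)$ and $\widehat{C}(\mu)$ when $n$ is even. Where you diverge is in the logical packaging. The paper argues by contradiction: it assumes $\mu$ is worthy, so $c_*\in\widehat{C}(\mu)$ must have at most one odd successor, hence zero (being odd), and the displayed union then forces \emph{all three} sets to be free of odd caps; combined with Lemma~\ref{lem:odd_even_caps} this pins down the entire local picture — $c_*$ has no odd successors in $\widehat{C}(\lam)$, $c_1$ is even with the unique odd successor $c_2$, and $c_2$ has none — which rules out $c' \in \{c_*, c_1, c_2\}$ in a single stroke and lets $c'$ survive verbatim into $\widehat{C}(\mu)$. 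You instead run a direct argument: fix a witness $c'$ in $\widehat{C}(\lam)$ and build a witness in $\widehat{C}(\mu)$, splitting on $c'$ and on the parities of $c_1, c_2$. Both work. The contradiction version converts what you experience as bookkeeping (especially the $c'=c_1$, both odd subcase, and the $c'=c_*$, $c_1$ odd, $c_2$ even subcase where you must not forget the $S^\lam_{c_2}$ contribution) into a short deduction of rigid constraints; your direct version is more mechanical and avoids the need to guess which constraints to extract, at the cost of several parallel cases. One small caution: your structural claim should be stated for the three sets of caps containing $c_1$ or $c_2$ — only $c_*$ contains $c_1$ (maximality), and only $c_1, c_*$ contain $c_2$ — which is what guarantees that no cap other than $c_*$ has its successor set or its $int(\cdot)$ parity altered; you invoke this but it deserves an explicit line.
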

\begin{proof}
Assume the contrary: $\mu$ is worthy.

Recall that since $n\equiv 0 \mod 2$, the virtual cap $c_* \in \widehat{C}(\lam)$ is odd, hence it has an even number of odd successors, by Lemma \ref{lem:odd_even_caps}. After the removal of $c_1, c_2$ it inherits their odd successors, so we have:
$$\{\substack{\text{odd successors of}\\ c_* \text{ in } \widehat{C}(\mu)}\} =   \{\substack{\text{odd successors of}\\ c_* \text{ in } \widehat{C}(\lam)}\} \setminus \{c_1\} \cup \{\substack{\text{odd successors of}\\ c_1 \text{ in } \widehat{C}(\lam)}\} \setminus \{c_2\} \cup \{\substack{\text{odd successors of}\\ c_2 \text{ in } \widehat{C}(\lam)}\}.$$

This immediately implies: since $c_* \in \widehat{C}(\mu)$ has at most one odd successor, the following must hold in $\widehat{C}(\lam)$: $c_* \in \widehat{C}(\lam)$ has no odd successors, $c_1$ is even and has precisely one odd successor: $c_2$, which has no odd successors itself. 

Hence we must have $c' \neq c_*, c_1, c_2$. In this case $c'\in \widehat{C}(\mu)$ will have at least $2$ odd successors, and $\mu$ is not worthy, contradicting our assumption. This proves the statement of the sublemma.
\end{proof}
}

 Applying the induction assumption to each $\mu_{\underline{c}}$, we conclude that if $\lam$ is not worthy, then $$\sdim L_n(\lam) = \sdim DS(L_n(\lam)) = 0.$$
 
 Now let us consider the case when $\lam$ is worthy. Then $c_*$ is odd, and all the maximal (non-virtual) caps in $C(\lam)$ are even. Hence $c_1$ is necessarily even.
 
 Assume $c_2$ is even. Then both $c_1$ and $c_2$ have odd successors, and after the removal of these caps both odd successors will be ``inherited'' by $c_* \in \widehat{C}(\mu)$. Hence $c_*\in \widehat{C}(\mu)$ will have at least $2$ odd successors in $\widehat{C}(\mu)$, and $\mu$ is not worthy. 
 
 Applying the induction assumption to $\mu$, we conclude: if $\lam$ is worthy, the sum in \eqref{eq:sdim_aux2} becomes
 \begin{equation}\label{eq:sdim_aux3}
\sdim L_n(\lam) = \sdim DS(L_n(\lam)) = \sum_{\underline{c}=(c_1, c_2), \; c_1, c_2 \in C(\lam)} (-1)^{\tilde{z}(\lam, \underline{c})} \sdim L_{n-2}(\mu_{\underline{c}})  
 \end{equation}
 over ordered pairs $\underline{c}=(c_1, c_2)$ where $c_1$ is a maximal (non-virtual, even) cap in $C(\lam)$ and $c_2$ is its unique odd successor. 
 
 In that case, the rooted forest $F_{\mu_{\underline{c}}}$ is obtained from $F_{\lam}$ by removing exactly one node, corresponding to the pair $\underline{c}=(c_1, c_2)$. 
 
 The parity $\tilde{z}(\lam, \overline{c})$ is then necessarily $0$: indeed, there is an even number of caps whose right end is to the right of the cap $c_1$, and after its removal, the same is true for the cap $c_2$. By Remark \ref{rmk:z_interpretation}, this implies: $$ \tilde{z}(\lam, \underline{c}) = 0 + 0 =0.$$

 Applying the induction assumption to all $\mu_{\underline{c}}$ and using Corollary \ref{cor:forest_binom}, we obtain: 
  \begin{align*}
\sdim L_n(\lam) &= \sdim DS(L_n(\lam)) =\sum_{\underline{c}=(c_1, c_2), \; c_1, c_2 \in C(\lam)}
L_{n-2}(\mu_{\underline{c}}) = \sum_{\underline{c}=(c_1, c_2), \; c_1, c_2 \in C(\lam)} \frac{\abs{F_{\mu_{\underline{c}}}}!}{F_{\mu_{\underline{c}}}!} = \\
&= \sum_{v \text{ a root of } F_{\lam}} \frac{\abs{F_{\lam}\setminus\{v\}}!}{\left(F_{\lam} \setminus\{v\}\right)!} = \frac{\abs{F_{\lam}}!}{F_{\lam}!}
  \end{align*}
as required. This completes the proof of Theorem \ref{thrm:sdim}.
\end{proof}

As s special case of the statement of Theorem \ref{thrm:sdim}, we have:
\begin{proposition}\label{prop:KW_simples}
 Let $L\in \F_n^k$ be a simple module, and $k \neq 0, \pm 1$. Then $\sdim L =0$.
\end{proposition}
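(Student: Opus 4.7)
By Theorem \ref{thrm:sdim}, a simple module $L_n(\lambda)$ (or $\Pi L_n(\lambda)$) has nonzero superdimension precisely when $\lambda$ is worthy, so the Proposition reduces to showing that the block index $k(\lambda) := \sum_{i=1}^n (-1)^{\bar\lambda_i}$ satisfies $|k(\lambda)| \leq 1$ whenever $\lambda$ is worthy. To analyse this recursively through the cap structure, I introduce for each $c = (i_c, j_c) \in \widehat{C}(\lambda)$ the partial sum
\[
K(c) \;=\; \sum_{\substack{i_c < \ell \leq j_c,\\ \ell \text{ black in } d_\lambda}} (-1)^\ell,
\]
with the convention $K(c_*) = k(\lambda)$. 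Directly from the way caps partition the positions $[i_c, j_c]$ into their left/right endpoints, $K$ obeys the recursion $K(c) = (-1)^{j_c} + \sum_{c'\,\text{successor of}\,c} K(c')$ for non-virtual $c$, and $K(c_*) = \sum_{c'\,\text{maximal}} K(c')$.

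The following length observation will drive everything: for any cap $c$, the $int(c)$ descendant caps (including $c$ itself) contribute $2\,int(c)$ distinct endpoints exactly filling $[i_c, j_c]$, so $j_c - i_c + 1 = 2\,int(c)$. In particular every even cap $c'$ satisfies $j_{c'} - i_{c'} + 1 \equiv 0 \pmod 4$. Consequently, if $c$ is an even cap in a worthy $\lambda$ with unique odd successor $c^o$ (whose existence is granted by Corollary \ref{cor:odd_even_caps}), then the gap $j_c - j_{c^o}$ equals $1 + 2\sum int(c')$ over the successors of $c$ strictly to the right of $c^o$---all of which are even by worthiness---hence is odd. Therefore $(-1)^{j_{c^o}} = -(-1)^{j_c}$.

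The main inductive claim, proved by induction on the cap poset from innermost outward, is that for worthy $\lambda$, every non-virtual odd cap $c$ satisfies $K(c) = (-1)^{j_c}$, while every non-virtual even cap $c$ satisfies $K(c) = 0$. The base case is a singleton cap $(i, i+1)$, odd with $K = (-1)^{i+1} = (-1)^{j_c}$. For the inductive step: if $c$ is odd then all its successors are even by Corollary \ref{cor:odd_even_caps}, so each $K(c')$ vanishes and $K(c) = (-1)^{j_c}$; if $c$ is even then $K(c) = (-1)^{j_c} + K(c^o) = (-1)^{j_c} + (-1)^{j_{c^o}} = 0$ by the parity observation. Applied at the top: when $n$ is even, $c_*$ is odd with no odd successors, giving $k(\lambda) = K(c_*) = 0$; when $n$ is odd, $c_*$ is even with a unique maximal odd cap $c^o$ as successor, giving $k(\lambda) = K(c^o) = \pm 1$. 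Either way $|k(\lambda)| \leq 1$, which yields the Proposition. The main subtlety---where worthiness is essentially used---is the parity-matching $(-1)^{j_{c^o}} = -(-1)^{j_c}$ in the even case; the remainder is formal expansion of the recursion.
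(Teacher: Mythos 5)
Your proof is correct, and it takes a genuinely different route from the paper's. The paper proves identity \eqref{eq:KW_aux} by induction on $n$, peeling off caps from the top of the partial order: when $n$ is odd it removes the unique maximal odd cap, and when $n$ is even it removes a maximal (even) cap together with its unique odd successor, observing in each case that the alternating sum is unchanged, then appealing to the result for $n-1$ or $n-2$. You instead run an induction \emph{from the inside out} over the poset $\widehat C(\lambda)$ and establish a stronger local invariant --- $K(c)=(-1)^{j_c}$ for odd caps, $K(c)=0$ for even non-virtual caps --- from which the block-parity statement falls out at $c_*$. This is a nice clarification: it isolates exactly what the worthiness hypothesis buys (via Corollary \ref{cor:odd_even_caps}, every even cap has a unique odd successor, so the recursion for $K$ closes) and makes the cancellation pattern transparent cap-by-cap rather than only globally. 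One small remark on your own commentary: the parity identity $(-1)^{j_{c^o}}=-(-1)^{j_c}$ is actually automatic, since $j_c-j_{c^o}=1+2\sum int(c')$ is odd regardless of the parities of the intervening caps; worthiness is rather used to guarantee the \emph{uniqueness} of the odd successor $c^o$, so that the $K(c')=0$ terms account for all remaining successors. That does not affect the validity of the argument.
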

\begin{proof}

Recall from Theorem \ref{thrm:sdim} that $$\sdim L_n(\lam) \neq 0  \; \Longleftrightarrow \; \lam \, \text{  is worthy}$$ So let $\lam \in \Lambda_n$ be a worthy weight. We will show that $L_n(\lam) \in \F_n^k$ with $k =0$ if $n$ is even and $k=\pm 1$ otherwise. In other words, we will prove that 
\begin{equation}\label{eq:KW_aux}                                                                                                                                                                                                                                                                      
     \sum_{i=1}^n (-1)^{\bar{\lam}_i} =\begin{cases}                                                                                                                                                                                                                   
         0 &\text{  if  } n\equiv 0 \mod 2\\
         \pm 1 &\text{  if  } n\equiv 1 \mod 2\\
         \end{cases}.                                                                                                                                                                                                                                                                                                 \end{equation}
 where $\{\bar{\lam}_i\}_{i=1}^n$ are precisely the right ends of the caps in the cap diagram for $\lam$.

Let us prove this by complete induction on $n\geq 1$. 

{\bf Base case:} For $n=1$, the category $\F_1$ only has two blocks: $\F_1^{\pm 1}$, so there is nothing to prove. For $n=2$, the category $\F_2$ has three blocks: $\F_2^0, \F_2^{\pm 2}$. The worthy weights in this case have the form $\lam \in \Lambda_2$ where $\lam_1=\lam_2$, hence $ \sum_{i=1}^2 (-1)^{\bar{\lam}_i}=0$ as required.

{\bf Step}: Let $n\geq 3$, and assume the statement holds up to rank $n-1$. Let $\lam \in \Lambda_n$ be a worthy weight. 

If $n$ is even, the cap diagram for $\lam$ has at least one maximal even cap $c$. Let $c'$ be its unique odd successor. Let $j, j'$ be the indices of the right ends of $c, c'$ respectively. Then $j \neq j' \mod 2$, hence $ (-1)^j + (-1)^{j'}=0$. If we remove both caps $c, c'$, we are left with a cap diagram for a worthy weight in $\Lambda_{n-2}$. By the induction assumption, the statement of \eqref{eq:KW_aux} holds for this weight, so 
$$      \sum_{i: \, \bar{\lam}_i \neq j, j'} (-1)^{\bar{\lam}_i} =0 \;  \Longrightarrow \; \sum_{i=1}^n  (-1)^{\bar{\lam}_i} = 0$$ as required.

If $n$ is odd, the cap diagram for $\lam$ precisely one maximal odd cap $c$. Let $j$ be the index of its right end. If we remove this cap, we are left with a cap diagram for a worthy weight in $\Lambda_{n-1}$. By the induction assumption, the statement of \eqref{eq:KW_aux} holds for this weight, so 
$$      \sum_{i: \, \bar{\lam}_i \neq j} (-1)^{\bar{\lam}_i} =0 \;  \Longrightarrow \; \sum_{i=1}^n (-1)^{\bar{\lam}_i} = \pm 1.$$ This completes the proof of the proposition.

\end{proof}

Finally, we recover the Kac-Wakimoto conjecture for $\p(n)$ proved in \cite{ES_KW}:  
\begin{corollary}\label{cor:KW}
 Let $M \in \F_n^k$ where $k \neq 0, \pm 1$. Then $\sdim M=0$.
\end{corollary}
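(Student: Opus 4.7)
The plan is to deduce this directly from Proposition \ref{prop:KW_simples} by a standard filtration argument. The key observation is that the superdimension is additive on short exact sequences: if $0 \to M' \to M \to M'' \to 0$ is exact in $\F_n$, then $\sdim M = \sdim M' + \sdim M''$. This follows from the fact that the underlying vector superspace of $M$ decomposes (as a super vector space, forgetting the $\p(n)$-action) as $M' \oplus M''$, and superdimension only depends on the super vector space structure. Similarly, $\sdim (\Pi N) = -\sdim N$ for any $N$.

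First, I would recall that the block decomposition of $\F_n$ implies that every composition factor of $M \in \F_n^k$ lies in $\F_n^k$. That is, if $L$ is any simple subquotient of $M$, then $L \cong \Pi^{\epsilon} L_n(\mu)$ for some $\mu \in \Lambda_n$ satisfying $\sum_i (-1)^{\bar{\mu}_i} = k$, and some $\epsilon \in \{0,1\}$.

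Next, I would apply Proposition \ref{prop:KW_simples}: since $k \neq 0, \pm 1$, every simple $L_n(\mu) \in \F_n^k$ satisfies $\sdim L_n(\mu) = 0$, and therefore $\sdim(\Pi^{\epsilon} L_n(\mu)) = \pm \sdim L_n(\mu) = 0$ as well. Finally, since $M$ is finite-dimensional, it admits a finite composition series $0 = M_0 \subset M_1 \subset \cdots \subset M_r = M$, and by iterating additivity of $\sdim$ on short exact sequences we obtain
\[
\sdim M \;=\; \sum_{j=1}^r \sdim(M_j/M_{j-1}) \;=\; 0,
\]
since each composition factor $M_j/M_{j-1}$ is a simple module in $\F_n^k$ and hence has vanishing superdimension.

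There is no real obstacle here: the only substantive input is Proposition \ref{prop:KW_simples}, and the remaining work is the elementary observation that superdimension is a well-defined additive function on the Grothendieck group of $\F_n^k$. The result is an immediate block-by-block application of Proposition \ref{prop:KW_simples}.
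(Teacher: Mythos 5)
Your proposal is correct and is exactly the argument the paper implicitly relies on (the paper gives no explicit proof of Corollary \ref{cor:KW}, presenting it as an immediate consequence of Proposition \ref{prop:KW_simples}). Additivity of superdimension on composition series, together with the fact that all composition factors of a module in $\F_n^k$ lie in $\F_n^k$ and hence have vanishing superdimension when $k\neq 0,\pm 1$, is precisely the intended reasoning.
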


\end{document}